\DeclareMathOperator{\N}{\mathsf{N}}
\DeclareMathOperator{\Bun}{\mathsf{Bun}}
\DeclareMathOperator{\Nb}{\mathbf{N}}
\DeclareMathOperator{\Cb}{\mathbb{C}}
\DeclareMathOperator{\Fb}{\mathbb{F}}
\DeclareMathOperator{\Xc}{\mathcal{X}}
\DeclareMathOperator{\Oc}{\mathcal{O}}
\newcommand{\Gb}{\mathbb{G}}
\newcommand{\Dc}{\mathcal{D}}
\renewcommand{\lim}{\mathsf{lim}}
\DeclareMathOperator{\res}{res}
\newcommand\varto[1]{\mathrel{\hbox to #1pt{\rightarrowfill}}}
\DeclareMathOperator{\id}{id}
\DeclareMathOperator{\Frac}{Frac}
\DeclareMathOperator{\Ab}{\mathbb{A}}
\DeclareMathOperator{\Zb}{\mathbb{Z}}
\DeclareMathOperator{\G}{\mathbb{G}}
\DeclareMathOperator{\GL}{GL}
\DeclareMathOperator{\Y}{\mathcal{Y}}
\renewcommand{\Mc}{\mathcal{M}}
\DeclareMathOperator{\Sch}{\mathsf{Sch}}
\DeclareMathOperator{\Ss}{\mathcal{S}}
\DeclareMathOperator{\Spec}{\mathsf{Spec}}
\DeclareMathOperator{\Yc}{\mathcal{Y}}
\newcommand{\BN}{{\mathbb{N}}}
\DeclareMathOperator{\iso}{\mathsf{iso}}
\DeclareMathOperator{\AR}{\mathsf{AR}}
\DeclareMathOperator{\Fil}{\rm{Fil}}
\DeclareMathOperator{\Conn}{\mathsf{MIC}}
\DeclareMathOperator{\pConn}{\mathsf{p}-\mathsf{MIC}}
\DeclareMathOperator{\qnilp}{\rm qn}
\DeclareMathOperator{\FConn}{\mathsf{FMIC}}
\newcommand{\ga}[2]{\begin{gather}\label{#1}#2 \end{gather}}
\theoremstyle{definition}
\newtheorem{definition}{Definition}[section]
\newtheorem{cl}[definition]{Claim}
\newtheorem{example}[definition]{Example} 
\newtheorem{claim}[definition]{Claim}
\newtheorem{remark}[definition]{Remark}
\theoremstyle{plain}
\newtheorem{theorem}[definition]{Theorem}
\newtheorem{proposition}[definition]{Proposition}
\newtheorem{corollary}[definition]{Corollary}
\newtheorem{lemma}[definition]{Lemma}
\newtheorem{notation}[definition]{Notation}
\newtheorem{conjecture}[definition]{Conjecture}
\title{Cristallinity of rigid flat connections revisited}
\date{\today}
\begin{document}
\author{H\'el\`ene Esnault \and Michael Groechenig}
\address{Department of Mathematics, Freie Universit\"at Berlin}
\email{esnault@math.fu-berlin.de}
\address{Department of Mathematics, University of Toronto}
\email{michael.groechenig@utoronto.ca}
\begin{abstract}
We generalise a theorem on the existence of Frobenius  isocrystal  and Fontaine-Laffaille module  structures on rigid flat connections to the non-proper setting. The proof is based on a new strategy of a point-set topological flavour, which allows us to produce a purely $p$-adic statement and thereby to avoid the classical Simpson correspondence.
\end{abstract}
\thanks{ Michael Groechenig was supported by an NSERC discovery grant and an Alfred P. Sloan fellowship. 
}
\maketitle

\tableofcontents

\section{Introduction}

An irreducible local system $\mathcal{L}$ on a smooth projective complex variety $X$ is called \emph{rigid}, if it cannot be deformed (continuously or algebraically) to a non-isomorphic local system. This is tantamount to the corresponding point of the moduli space of local systems being isolated. 

The term local system can be understood either in the Betti or de Rham sense, i.e., as either being locally constant sheaf of complex vector spaces or a vector bundle with an integrable connection. Over the field of complex numbers those two theories produce equivalent categories by virtue of the Riemann-Hilbert correspondence.

\begin{conjecture}[Simpson]
A rigid local system $\mathcal{L}$ on $X$ is of \emph{geometric origin}. That is, there exists an open dense subset $U \subset X$ and a smooth projective family $\pi: Y \to U$ such that $\mathcal{L}$ is a direct summand of $R^{\bullet}\pi_*\Oc_Y$.
\end{conjecture}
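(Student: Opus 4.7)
The plan is to decompose the problem into three layers. First, extract a Hodge-theoretic structure on $\mathcal{L}$ forced by rigidity; second, upgrade this to an integral $p$-adic structure, where the non-proper generalisation proved in this paper becomes essential; third, use these two inputs to produce an honest smooth projective family $\pi \colon Y \to U$. The underlying philosophy is that a rigid local system ought to possess every feature a motivic local system has, and that the accumulated structure should eventually force it to actually be motivic.

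For the first step I would invoke Simpson's theorem on the Dolbeault moduli space: the $\Cb^{\times}$-action by rescaling the Higgs field fixes every rigid point of $M_{Dol}(X)$, and its fixed locus consists precisely of complex variations of Hodge structure. Hence $\mathcal{L}$ underlies a $\Cb$-VHS, appealing to tame parabolic non-abelian Hodge theory \`a la Mochizuki when $X$ is only quasi-projective. For the second step I would invoke the main theorem of this paper together with its projective predecessors: the rigid $\mathcal{L}$ spreads over the ring of integers $\Oc_K$ of a number field $K$ and carries, at almost every residue characteristic $p$, a Frobenius isocrystal / Fontaine-Laffaille module structure on a smooth model of the possibly non-proper $X$. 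Via Riemann-Hilbert over $\Zb_p$ this produces a compatible system of $\ell$-adic lisse sheaves $\{\mathcal{L}_\ell\}$ that are de Rham at every residue characteristic, and combined with the $\Cb$-VHS above this identifies $\mathcal{L}$ with what ought to be the Betti realisation of a relative Chow motive over a dense open $U \subset X$.

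The third step is to upgrade these compatible realisations to an actual geometric family. Known strategies to draw on include: Katz's iterated middle-convolution construction, which produces such a $Y$ for every rigid local system on an open subset of $\Pb^1$ by bootstrapping from explicit hypergeometric and Kummer data; the Shimura-variety / arithmeticity route available when the monodromy is arithmetic in a reductive group of Hodge type; and a hoped-for motivic Fontaine-Mazur statement in families, which would extract $Y$ directly from the compatible $p$-adic structures supplied by step two. In each case one would realise $\mathcal{L}$ as a direct summand of $R^{\bullet}\pi_{*}\Oc_Y$ by producing algebraic correspondences that cut it out of the cohomology of $Y$.

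The main obstacle is precisely this third step in full generality. The combination of non-abelian Hodge theory with the integral $p$-adic structures furnished by the present paper provides every conceivable feature a motivic local system should have, yet the task of cutting $\mathcal{L}$ out of the cohomology of an actual smooth projective family is a Tannakian / motivic realisation problem for which no general method is known. Any complete proof of Simpson's conjecture will have to confront exactly this step, which is where the true difficulty lies and which at present is only resolved in the special cases recalled above.
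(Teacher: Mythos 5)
There is a genuine gap, and you have in fact named it yourself: your third step is not an argument but a list of hopes. The statement you were asked to prove is Simpson's motivicity conjecture, which is open; the paper does not prove it and does not claim to. What the paper establishes (Theorem \ref{thm:main}) is a strictly weaker consequence predicted by the conjecture, namely that a rigid flat connection on a $p$-adic model gives rise to an overconvergent $F$-isocrystal, and under additional hypotheses a Fontaine--Laffaille module structure. Your first two layers — that a rigid local system underlies a $\Cb$-VHS by Simpson/Mochizuki, and that it carries Frobenius structures at almost all $p$ by the present paper and its predecessor \cite{EG20} — are accurate summaries of known evidence, but they are realisation-level statements. Possessing all the expected realisations and comparison structures of a motive does not, by any known mechanism, produce a smooth projective family $\pi\colon Y\to U$ together with correspondences cutting $\mathcal{L}$ out of $R^{\bullet}\pi_*\Oc_Y$; that is precisely the content of the conjecture, and no general Tannakian or Fontaine--Mazur-type descent is available to bridge it. Katz's middle convolution handles only rank-one punctured $\Pb^1$ situations (cohomologically rigid local systems on open subsets of the projective line), and the Shimura-variety route applies only under strong arithmeticity hypotheses, so neither covers the general case.

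Concretely: the step ``compatible system of realisations $\Rightarrow$ geometric origin'' is exactly what would fail under scrutiny, because it presupposes a solution to the motivic realisation problem that you acknowledge does not exist. A correct response here is to recognise that the statement is a conjecture stated for context and motivation, that the paper's contribution is the cristallinity/$F$-isocrystal result in the non-proper setting (proved by the topological openness argument for Frobenius pullback on $\Mc_{dR}(W)^{\iso}$), and that no proof of the conjecture itself can be extracted from it.
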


In our previous work \cite{EG20} we proved that the restriction of a rigid flat connection on a $p$-adic model of  $X$
gives rise to an $F$-isocrystal, provided that $p$ is sufficiently large (with respect to a non-specified bound). This provides evidence for Simpson's motivicity conjecture, as for $p$ large such a Frobenius structure always exists 
on local systems of geometric origin.

In fact, reductions of rigid flat connections are endowed with an even stronger property: a Fontaine-Lafaille module structure. The recent proof of the Andr\'e-Oort conjecture by Pila--Shankar--Tsimerman (see \cite{PST}) relies on this in the context of local systems on Shimura varieties  of real rank $\ge 2$. The latter are often non-proper, which motivates the quest for a generalisation of our previous result to the non-proper setting. In the appendix to \cite{PST} we presented an argument, which was tailor-made for the case of Shimura varieties  of real rank $\ge 2$ and relied on a strong cohomological rigidity assumption known to hold in this case. In the present article, we settle this question in general  and 
 further present a new proof.

We 
now describe our main result. Let $k$ be a finite field of  odd characteristic. We fix a smooth proper scheme $\bar{X}/W(k)$ containing a strict relative normal crossings divisor $D/W$. The open complement $\bar{X} \setminus D$ 
is denoted by $X$.

 A $W$-linear flat connection $(E,\nabla)$ on $X$ is said to have quasi-unipotent mondromies at infinity if it extends to a logarithmic connection on $\bar{X}$, and  if the eigenvalues  $\lambda_{ij}$ of the residues of $\nabla$ along the components $D_i$ of $D$, which lie in $\bar {\mathbb Z}_p$, also lie in $\mathbb Q$ and have denominators prime to $p$. We refer to the type of $(E,\nabla)$ as the tuple consisting of the denominators in $\mathbb Q$ of the $\{\lambda_{ij}\}$.
  In the main body of the text we will use an equivalent characterisation involving root stacks.

The de Rham local system $(E,\nabla)$ is called \emph{rigid} over $K$, if it cannot be deformed (relatively to $K$) to a non-isomorphic de Rham local system, provided that the deformation fixes the boundary behaviour.
We will also assume stability, which, in characteristic zero, is equivalent to geometric irreducibility. 
\begin{theorem}[Main theorem] \label{thm:main}
Under the assumptions above, every local system on $X/W$ with fixed quasi-unipotent monodromies at infinity, which is rigid over $K$ gives rise to an overconvergent $F$-isocrystal.

Furthermore, if   $p> r$  and every rigid local system of the same type and rank admits a Griffiths-transverse filtration defined over $W$ which is locally split such that the associated Higgs bundle is slope  stable, then $(E,\nabla)$ can be endowed with the structure of a torsion-free Fontaine-Laffaille module.
\end{theorem}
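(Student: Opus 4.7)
The plan is to reduce the non-proper situation to a proper (stacky) one via root stacks, extract a Frobenius structure purely from finiteness of rigid points, and then upgrade it to overconvergence and, under the stronger hypothesis, to a Fontaine--Laffaille structure.

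\textbf{Step 1 (root stack reduction).} The quasi-unipotence hypothesis, together with the assumption that the denominators $n_{ij}$ of the eigenvalues $\lambda_{ij}$ are prime to $p$, allows me to form the root stack $\bar{\X} \to \bar X$ obtained by taking $n_{ij}$-th roots of the components $D_i$. Because the denominators are prime to $p$, this stack is smooth and proper over $W$, and the logarithmic connection $(E,\nabla)$ pulls back to a \emph{regular} (non-logarithmic) flat connection $(\E,\nabla)$ on $\bar{\X}$. Rigidity, stability, and the type are preserved. From now on I work on $\bar{\X}$, which is the ``proper'' object in the sense needed to imitate \cite{EG20}.

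\textbf{Step 2 (finiteness and Frobenius permutation).} Fix the rank $r$ and type. The moduli stack $M$ of stable flat connections of this rank and type on $\bar{\X}/W$ is of finite type over $W$. Rigid points correspond to isolated closed points of the generic fibre $M_K$. Because $M$ is of finite type and the rigid locus is $0$-dimensional, the set $R \subset M(\bar K)$ of rigid isomorphism classes of the given type is finite and, after base change to some finite extension, is defined over a finite extension of $W$. Frobenius pullback defines an operation on the special fibre which permutes the reductions of rigid points; finiteness forces some power $F^{n\ast}$ to fix $[(\E,\nabla)]$ modulo $p$. Rigidity then lifts this isomorphism canonically to $W$ (every infinitesimal obstruction vanishes because the tangent space to $M$ at a rigid point is zero), producing an $F^n$-isocrystal structure on the formal completion.

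\textbf{Step 3 (overconvergence).} This is where I expect the ``point-set topological'' input from the introduction to enter, and where the main obstacle lies. The issue is that extending the Frobenius isomorphism from the formal neighbourhood of the special fibre to a strict neighbourhood in the rigid analytic sense has no a priori reason to hold in the non-proper setting. The proposal is to rigidify the argument on the root stack: because $\bar{\X}$ is proper, rigid analytic GAGA implies that the Frobenius-pullback isomorphism of $(\E,\nabla)$, which is defined on the tube of the special fibre, extends to an \emph{overconvergent} isomorphism by a connectedness / openness argument on the space of isomorphisms. Descending the resulting Frobenius structure from $\bar{\X}$ to the open $X$ (which is the complement of the tame divisor) gives an overconvergent $F$-isocrystal on $X/W$.

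\textbf{Step 4 (Fontaine--Laffaille).} Assume $p > r$ and the Griffiths-transverse filtration hypothesis. After replacing $F^n$ by a suitable power, the $F$-isocrystal structure together with the transverse filtration satisfies the Mazur--Ogus / Faltings axioms for a Fontaine--Laffaille module, provided the filtration is locally split and the associated Higgs bundle is slope stable (so that no strict sub-objects can obstruct integrality). The construction parallels the one in \cite{EG20}: the locally split transverse filtration defines the Hodge filtration, slope stability ensures the Frobenius is divisible appropriately on each graded piece, and $p > r$ guarantees the Fontaine--Laffaille range $[0, p-1]$ contains all Hodge weights. Torsion-freeness follows from stability.

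The hard part will be Step 3: producing genuine overconvergence without invoking the Simpson correspondence. I expect this to be handled by exploiting properness of the root stack together with a topological argument that the locus in moduli where the Frobenius isomorphism extends is both open and closed in the relevant rigid analytic neighbourhood.
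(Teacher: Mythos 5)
There is a genuine gap, and it sits exactly at the heart of the theorem: your Step 2 never proves that Frobenius permutes the rigid points. You assert that "Frobenius pullback defines an operation on the special fibre which permutes the reductions of rigid points", but there is no a priori reason that $F^*$ applied to the reduction of a rigid connection is again the reduction of a rigid connection — that is the statement that has to be established. Your proposed repair, lifting a mod-$p$ isomorphism $F^{n*}(\bar E,\bar\nabla)\simeq(\bar E,\bar\nabla)$ to $W$ "because the tangent space to $M$ at a rigid point is zero", is not available: rigidity only says the $K$-point is isolated, and an isolated point may be non-reduced, so the tangent space need not vanish. Assuming it does is precisely the strong cohomological rigidity hypothesis (as in the appendix to \cite{PST}) that this theorem is designed to avoid. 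The paper's route is different and purely at the level of $W$-points: the Frobenius pullback $F^*\colon \Mc_{dR}(W)^{\iso}\to\Mc_{dR}(W)^{\iso}$ is defined over $W$ itself (via the Berthelot/Taylor gluing, which needs $p\ge 3$ and unramifiedness, extended to the logarithmic root-stack setting), and one shows it is continuous, injective (via the factorisation $F^*=C^{-1}\circ[p]$ through a logarithmic analogue of Xu's inverse Cartier operator) and \emph{open} for the pro-finite $p$-adic topology on $\Mc_{dR}(W)^{\iso}$. Openness implies that $F^*$ sends isolated $W$-points to isolated $W$-points; a separate lemma identifies isolated $W$-points of the stable locus with rigid $K$-points; finiteness of rigid points then gives a permutation of a finite set and hence a finite $F^*$-orbit directly over $W$, with no deformation-theoretic lifting. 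None of this topological mechanism appears in your proposal (you instead expect the topological input to enter in the overconvergence step, which is not where it is used).

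Two further points. First, in Step 1 the connection does not become a regular connection on the root stack: passing to $\bar{\Xc}_{D,n}$ only removes the eigenvalues of the residues, and the correct object is a logarithmic connection with \emph{nilpotent} residues on the root stack; this is why the paper has to develop the logarithmic Frobenius pullback and inverse Cartier machinery at all. Second, Step 3 as written is a placeholder: the paper does not use rigid-analytic GAGA or an open/closed argument, but Kedlaya's logarithmic extension theorem (overconvergence of a log-$F$-isocrystal with nilpotent residues on a proper variety), applied after pulling back along the Kummer cover $z_i\mapsto z_i^n$, which is \'etale onto the root stack, followed by effective descent for overconvergent isocrystals along the finite surjective cover (Lazda). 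Similarly, the Fontaine--Laffaille part is obtained not by checking Mazur--Ogus axioms directly but by producing an $f$-periodic Higgs--de Rham flow: the flow functor $\Phi$ agrees with $F^*$ after inverting $p$, stable $W$-models are unique, so the finite $F^*$-orbit together with the assumed $W$-filtrations yields periodicity, and $p>r$ enters to keep the width of the filtration below $p$. Your Step 4 sketch is compatible in spirit but does not engage with this mechanism.
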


We emphasise that even in the case of a projective variety $X$ (i.e., $D=\emptyset$) this result is actually stronger than the one of \cite{EG20}. It is indeed a purely local result over $W$ and does not refer to the objects coming from complex geometry. It notably yields a more explicit understanding of the mod $p$ reductions for which an $F$-isocrystal structure exists.


Furthermore, the proof given here follows a different strategy and is actually more elementary and self-contained. In particular, we avoid Simpson's \emph{non-abelian Hodge theory}.

 Lan--Sheng--Zuo's Higgs de Rham flows are crucial to our approach, nonetheless we present a slightly different approach to their construction, which hinges on Xu's lifting of the inverse Cartier operator \cite{Xu}. The bulk of this paper is devoted to developing a logarithmic analogue of this machinery.

The overall strategy we pursue could be classified as being point-set topological. We use that for a stack $\Xc$ over the ring of Witt vectors $W$, there is a natural topology on the set of isomorphism classes $\Xc(W)^{\iso}$. For the stack of de Rham local systems $\Mc_{dR}(W)$ we show that for $p\ge 3$ the Frobenius pullback map
$$F^*\colon \Mc_{dR}(W)^{\iso} \to \Mc_{dR}(W)^{\iso}$$
is well-defined, open and continuous. With this topological property at hand, we infer our main result using finiteness of rigid local systems of a fixed rank and bounded order of monodromies at infinity.

Our original approach was centred around a nilpotency result for the $p$-curvature of a rigid flat connection for $p$ large. In our new strategy, $p$-curvature no longer plays a central role. However, since every $W$-linear flat connection $(E,\nabla)$ with $(F^f)^*(E,\nabla) \simeq (E,\nabla)$ for some $f > 0$ must have nilpotent $p$-curvature, we immediately see that our main result implies the following corollary.

\begin{corollary}
Under the same assumptions as in Theorem \ref{thm:main}, the $p$-curvature of the special fibre of $(E,\nabla)$ is nilpotent.    
\end{corollary}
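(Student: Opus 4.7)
The plan is to derive the corollary immediately from Theorem~\ref{thm:main} by invoking the classical principle, explicitly recalled in the paragraph preceding the statement, that a Frobenius structure forces the $p$-curvature to be nilpotent.

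First, I apply Theorem~\ref{thm:main} to equip $(E,\nabla)$ with the structure of an overconvergent $F$-isocrystal. Unravelled, this yields an integer $f \geq 1$, a Frobenius lift $F$ (on a suitable strict neighbourhood of the special fibre, or after passing to the isocrystal), and a horizontal isomorphism $\Phi \colon (F^f)^*(E,\nabla) \isoto (E,\nabla)$. Reducing $\Phi$ modulo $p$ gives a horizontal isomorphism $(\bar F^f)^*(\bar E,\bar\nabla) \simeq (\bar E,\bar\nabla)$ of flat connections on the special fibre $\bar X_k$, where $\bar F$ denotes the absolute Frobenius.

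Second, I invoke the standard fact (essentially due to Katz): any flat connection in characteristic $p$ which is isomorphic to an iterated absolute Frobenius pullback of itself has nilpotent $p$-curvature. The mechanism is that the $p$-curvature $\psi$ is $\bar F$-linear, pullback along $\bar F^f$ effectively raises its eigenvalues to the $p^f$-th power, and the existence of a $\bar F^f$-equivariant horizontal isomorphism then constrains those eigenvalues to satisfy $\lambda^{p^f} = \lambda$; an iteration of this argument, combined with horizontality of $\psi$, forces the eigenvalues to vanish and hence $\psi$ to be nilpotent.

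This corollary carries no genuine obstacle of its own: all the substantive difficulty sits in Theorem~\ref{thm:main}. The only point of diligence is verifying that the overconvergent Frobenius structure produced by the theorem does descend to the honest horizontal isomorphism on the special fibre needed for Katz's criterion, which is built into the definition of an overconvergent $F$-isocrystal.
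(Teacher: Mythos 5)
There is a genuine gap at the step ``reducing $\Phi$ modulo $p$''. The Frobenius structure that Theorem \ref{thm:main} provides is an isomorphism of (over)convergent \emph{isocrystals}, i.e.\ a horizontal isomorphism defined after inverting $p$; it is in no way ``built into the definition'' that this $K$-linear isomorphism preserves the $W$-lattice $E$, and in general it does not (think of the divided Frobenius on a Gau\ss--Manin connection). Consequently you do not get an induced horizontal isomorphism $(\bar F^f)^*(\bar E,\bar\nabla)\simeq(\bar E,\bar\nabla)$ on the special fibre, and the whole deduction collapses at exactly the point you dismiss as routine diligence. Note also that the char-$p$ mechanism you describe is off: since $dF_{X_k}=0$, the naive absolute-Frobenius pullback of any connection in characteristic $p$ is the canonical connection, whose $p$-curvature is zero; so if your mod-$p$ isomorphism existed the conclusion would be immediate with no eigenvalue argument, while the fixed-point relation $\lambda^{p^f}=\lambda$ you invoke does not by itself force vanishing (it is satisfied by all of $\mathbb{F}_{p^f}$).

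The paper's deduction is different and avoids the lattice issue entirely: what the proof of Theorem \ref{thm:main} actually produces is a finite $F^*$-orbit inside $\Mc_{dR}(W)^{\iso}$, i.e.\ an isomorphism $(F^f)^*(E,\nabla)\simeq(E,\nabla)$ of $W$-families, where $F^*$ is the integral ($W$-linear) Frobenius pullback of Section 3. For this integral pullback, reduction mod $p$ is legitimate, and since the local Frobenius lifts satisfy $d\widehat F\equiv 0 \pmod p$, the special fibre of $F^*(E_0,\nabla_0)$ has vanishing $p$-curvature (this is stated explicitly in the proof of Theorem \ref{thm:open}); hence $f$-periodicity over $W$ with $f>0$ gives nilpotence (indeed vanishing) of the $p$-curvature of $(E,\nabla)_k$. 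So to repair your argument you should quote the $W$-level periodicity established en route to the theorem (or, alternatively, invoke the nontrivial classical fact that convergence of the $F$-isocrystal forces quasi-nilpotent $p$-curvature of any horizontal $W$-lattice), rather than attempting to reduce the isocrystal-level Frobenius isomorphism modulo $p$.
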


\subsubsection*{Leitfaden}

Section 2 is devoted to the topological preliminaries concerning the natural topology on $W$-points of stacks. We heavily rely on \v Cesnavi\v cius's article \cite{Cesnavicius}.

In the third section we extend the theory of Frobenius pullbacks and Higgs de Rham flows to the logarithmic setting. 

Our presentation is inspired by the theory of Lan--Sheng--Zuo (see \cite{LSZ}), but differs in various aspects, in particular, the use of a logarithmic analogue of Xu's inverse Cartier operator (see \cite{Xu}), which we also treat in this section (solely in the level of generality required by our arguments).

The fourth section contains the new argument. In particular, we prove that Frobenius pullback of $W$-linear flat connections is continuous and open. As an immediate consequence of the latter, we obtain that the Frobenius permutes isolated points. Since they are finite in number, they give rise to finite orbits and thus $F$-isocrystals.

\subsubsection*{Acknowledgements}
It is a great pleasure to acknowledge the influence and help of Johan de Jong. For proper varieties, the first purely $p$-adic proof of the existence of an $F$-isocrystal structure was developed by the first author and de Jong during a research visit at Columbia University. It is based on a characterisation of rigidity in terms of cup products. Details can be found in the lecture notes \cite[Chapter~8]{LN}. Furthermore, we are grateful to Johan de Jong for having drawn our attention to K\c estutis \v Cesnavi\v cius's article \cite{Cesnavicius}, which supplies 
the necessary topological foundations to our argument. We thank Peter Scholze for alerting us of an erroneous lemma in the first version of this article, Marco D'Addezio for an insightful remark and Will Sawin for pointing out an unnecessary hypothesis in one of our key lemmas.

\section{Topological preliminaries}
In this section we denote by $R$ a local topological ring, that is, a local commutative ring $R$ with a topology on the set $R$ for which the addition and the multiplication are continuous,
satisfying the axioms:
\begin{enumerate}
    \item[($\alpha$)] The  subset  $R^{\times} \subset R$ consisting of the units is open.
    \item[($\beta$)] The inverse map $R^{\times} \to R^{\times}$ is continuous  for the induced topology.
\end{enumerate}

For $R$-schemes, the topology on $X(R)$ was defined in Conrad's \cite{Conrad}. The case of stacks (and more generally, presheaves) is due to \cite{Cesnavicius}, building up on prior work of Moret-Bailly \cite{Moret} for topological fields.

In this article, we will only apply this construction to $W = W(k)$, the ring of Witt vector of a finite field $k=\Fb_q$, and to the topological field $K=\Frac W$.

\subsection{The case of a scheme}

We fix an  $R$-scheme $X$ with the additional quality that it is locally of finite presentation over the base ring $R$. 

\begin{proposition}
There exists a unique topology $\mathcal{T}$ on $X(R)$, called the \emph{strong topology}, with the following properties:
\begin{enumerate}
    \item For every Zariski-open $U\subset X$ the induced subset $$U(R)\subset X(R)$$ is open in $\mathcal{T}$.
    \item Let $U\subset X$ be Zariski-open and affine, and $i\colon U\hookrightarrow \mathbb{A}_R^N$ be a closed immersion, then the induced map 
    $$U(R)\hookrightarrow R^N$$
    is an embedding, that is, a homeomorphism onto its image.
\end{enumerate}
\end{proposition}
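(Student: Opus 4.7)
The plan is to construct $\mathcal{T}$ first on affine opens via closed embeddings into affine spaces, and then to glue by the standard prescription that a subset is open iff its intersection with every $U(R)$ for $U$ affine open is open. The essential input from the locality of $R$ is that every $R$-point $x\colon \Spec R \to X$ factors through some affine open $U \subset X$: the image of the unique closed point of $\Spec R$ lies in some affine open $U$, and then $x^{-1}(U)$ is an open subscheme of $\Spec R$ containing the closed point, hence equal to $\Spec R$ itself. Consequently $X(R) = \bigcup_{U \text{ affine open}} U(R)$.

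For an affine open $U = \Spec S$ of finite presentation over $R$, any closed immersion $i\colon U \hookrightarrow \mathbb{A}_R^N$ yields an injection $i_*\colon U(R) \hookrightarrow R^N$, and I would equip $U(R)$ with the subspace topology of $R^N$. To show this is independent of $i$, let $i_1\colon U \hookrightarrow \mathbb{A}_R^{N_1}$ and $i_2\colon U \hookrightarrow \mathbb{A}_R^{N_2}$ be two such immersions and consider the diagonal embedding $(i_1,i_2)\colon U \hookrightarrow \mathbb{A}_R^{N_1+N_2}$. The coordinates of $i_2$, viewed as regular functions on $U$, lift to polynomials $P_1,\ldots,P_{N_2} \in R[x_1,\ldots,x_{N_1}]$ via the surjection defining $i_1$; the associated polynomial map $R^{N_1} \to R^{N_2}$, $(a_\ell) \mapsto (P_j(a_\ell))$, is continuous by the axioms on $R$ and restricts on $i_1(U(R))$ to the change-of-coordinates map. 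Combined with continuity of the coordinate projections from $R^{N_1+N_2}$, this shows that all three choices induce the same topology on $U(R)$, so this topology is canonical.

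Finally, I declare $V \subset X(R)$ to be open iff $V \cap U(R)$ is open in $U(R)$ for every affine open $U \subset X$. Property (2) is then tautological. For property (1), given an open subscheme $Y \subset X$ and an affine open $W \subset X$, one covers $Y \cap W$ by affine opens $W_j \subset W$, so $Y(R) \cap W(R) = (Y \cap W)(R) = \bigcup_j W_j(R)$, which is open in $W(R)$ by construction. Uniqueness follows because (2) determines any candidate topology on every affine open of $X$, and (1) combined with the covering $X(R) = \bigcup U(R)$ then pins it down globally. The main technical obstacle is the independence-of-immersion step: it requires simultaneously that regular functions on $U$ lift to polynomials on the ambient affine space, and that polynomial maps between powers of $R$ are continuous --- this latter point being precisely what the axioms $(\alpha)$ and $(\beta)$ on the topological ring $R$ (together with continuity of addition and multiplication) provide.
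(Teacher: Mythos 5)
Your overall architecture (topologize the affine pieces via closed immersions into affine space, prove independence of the immersion, then glue using that every $R$-point factors through an affine open because $R$ is local) is exactly the standard construction behind the paper's proof, which is simply a citation to \v Cesnavi\v cius; and your independence-of-immersion argument via the diagonal embedding and lifting the coordinates of $i_2$ to polynomials in those of $i_1$ is correct. Note, however, that this step uses only continuity of addition and multiplication on $R$ --- not the axioms $(\alpha)$ and $(\beta)$, contrary to your closing remark.

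The genuine gap is the assertion that $\bigcup_j W_j(R)$ ``is open in $W(R)$ by construction.'' Nothing in the construction gives this: the construction topologizes each affine open's $R$-points separately, and one still has to prove the key localization lemma that for an affine open of $W=\Spec S$ --- it suffices to treat a basic open $D(f)$ --- the subset $D(f)(R)\subset W(R)$ is open and its intrinsic topology agrees with the subspace topology from $W(R)$. This is precisely where $(\alpha)$ and $(\beta)$ enter: since $R$ is local, a point $x\colon S\to R$ factors through $D(f)$ if and only if $x(f)\in R^{\times}$, so $D(f)(R)$ is the preimage of $R^{\times}$ under the continuous evaluation map $x\mapsto x(f)$, open by $(\alpha)$; and for an embedding of $\Spec S_f$ into affine space the extra coordinate is $x\mapsto x(f)^{-1}$, so agreement of the two topologies uses continuity of inversion, i.e.\ $(\beta)$. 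Without this lemma neither property (1) nor your ``tautological'' property (2) follows: for (2) one also needs the resulting overlap compatibility --- that for affine opens $U,U'$ the intrinsic topologies on $U(R)$ and $U'(R)$ induce the same topology on $(U\cap U')(R)$ --- since otherwise the glued topology need not restrict on a given affine open to the intrinsic one, and likewise the uniqueness argument presupposes existence of a topology satisfying both properties. So the skeleton is right, but the localization lemma and the overlap compatibility must be stated and proved, and the role of the hypotheses on the topological ring $R$ is misattributed in your write-up.
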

\begin{proof}
See \cite[Section 2.2]{Cesnavicius}.
\end{proof}

The following lemma can also be extracted from \cite[Section 2.2]{Cesnavicius}.
\begin{lemma}
The strong topology  is functorial and preserves finite products, i.e., it yields a functor
$\Sch/R \to \mathsf{Top}$
preserving finite products. Furthermore, a continuous ring homomorphism $R_1 \to R_2$ yields a continuous map $X(R_1) \to X_{R_2}(R_2)$ for every $R_1$-scheme $X$.
\end{lemma}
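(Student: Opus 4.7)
The plan is to reduce everything to the affine case, where both functoriality and preservation of products follow from the fact that polynomial maps $R^N \to R^M$ are automatically continuous (since addition and multiplication in $R$ are continuous by hypothesis), and then to use the characterising properties $(1)$ and $(2)$ of the strong topology to propagate this to general locally finitely presented schemes.

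First I would prove functoriality. Let $f\colon X \to Y$ be a morphism of $R$-schemes, both locally of finite presentation. To check continuity of $f_*\colon X(R) \to Y(R)$ near a point $x \in X(R)$, I would choose an affine open $V \subset Y$ containing $f(x)$, then an affine open $U \subset f^{-1}(V)$ containing $x$, pick closed immersions $U \hookrightarrow \mathbb{A}^N_R$ and $V \hookrightarrow \mathbb{A}^M_R$, and observe that the restriction of $f$ to $U \to V$ is encoded by $M$ polynomials in $N$ variables with coefficients in $R$. By the axioms on the topology of $R$, any such polynomial map $R^N \to R^M$ is continuous, and by property $(2)$ the inclusions $U(R) \subset R^N$ and $V(R) \subset R^M$ are embeddings; combined with property $(1)$, which tells us that $U(R) \subset X(R)$ and $V(R) \subset Y(R)$ are open, this produces continuity of $f_*$ in a neighbourhood of $x$. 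Since $x$ was arbitrary, $f_*$ is continuous.

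Preservation of finite products reduces, by the same local argument, to the case $X = \mathbb{A}^N_R$ and $Y = \mathbb{A}^M_R$, where both $(X \times_R Y)(R)$ and $X(R) \times Y(R)$ are identified with $R^{N+M}$, and the strong topology coincides on both sides with the product topology $R^N \times R^M$. The only thing to check is that the affine opens one uses to cover $X \times_R Y$ may be taken to be products of affine opens of $X$ and $Y$, which is automatic. For the base change assertion, given a continuous ring homomorphism $\varphi\colon R_1 \to R_2$ and an $R_1$-scheme $X$ locally of finite presentation, I would again work locally: for $U \subset X$ affine and a closed immersion $U \hookrightarrow \mathbb{A}^N_{R_1}$, the base-changed immersion $U_{R_2} \hookrightarrow \mathbb{A}^N_{R_2}$ fits into a square whose top arrow $U(R_1) \to U_{R_2}(R_2)$ is the restriction of $\varphi^N\colon R_1^N \to R_2^N$, which is continuous by coordinate-wise continuity of $\varphi$.

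I expect no serious obstacle, since everything is bootstrapped from the axioms $(\alpha)$ and $(\beta)$ and the two characterising properties of $\mathcal{T}$; the mildly delicate point is to confirm that the locally-defined continuous maps glue to a continuous map on all of $X(R)$, which follows at once from property $(1)$ (Zariski opens are open in the strong topology) together with the fact that continuity is a local property on the source. The whole lemma is in fact subsumed by the treatment in \cite[Section~2.2]{Cesnavicius}, but the above is the direct unwinding.
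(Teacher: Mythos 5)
Your proof is correct and matches the intended argument: the paper offers no proof of its own here beyond the citation of \cite[Section 2.2]{Cesnavicius}, and your affine-local unwinding (factoring any $R$-point through an affine open via locality of $R$, continuity of polynomial maps $R^N\to R^M$ from continuity of addition and multiplication, then glueing via property (1) since continuity is local on the source) is precisely the standard argument behind that citation. The one step you use tacitly — that for an affine open $U\subset X$ the subspace topology on $U(R)\subset X(R)$ coincides with the strong topology of $U$ itself — is exactly what characterising property (2) supplies, so nothing essential is missing.
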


For the topological ring $R=W$  we will also refer to the strong topology as the \emph{$p$-adic topology}. In this case, more can be said. First of all, this functor sends $W$-schemes of finite type to compact Hausdorff spaces. 

\begin{lemma}\label{lemma:Hdff}
Let $X/W$ be an  algebraic $W$-space of finite type, then $X(W)$ is a compact Hausdorff space.    
\end{lemma}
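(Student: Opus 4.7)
The plan is to show that $X(W)$ is actually a profinite topological space, which will automatically make it compact Hausdorff. The starting observation is that since $k=\Fb_q$ is finite, each $W/p^n$ is a finite ring, and the $p$-adic topology on $W$ identifies it with the inverse limit $\varprojlim_n W/p^n$ of discrete finite sets. In particular, $W$ itself is profinite, and hence so is $W^N$ for every $N \geq 0$.

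I would next handle the affine case: for $X$ affine of finite type over $W$, pick a closed immersion $i \colon X \hookrightarrow \mathbb{A}^N_W$ and use property (2) of the strong topology to identify $X(W)$ with a topological subspace of $W^N$. The image is the common zero locus of finitely many polynomials $f_1,\dots,f_k \in W[x_1,\dots,x_N]$, and since each $f_j \colon W^N \to W$ is continuous (built from the continuous ring operations on $W$), the image is a closed subset of a profinite space, hence itself profinite, and in particular compact Hausdorff.

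For the general case, I would reduce to the affine one by covering. For a scheme $X$ of finite type over $W$ this is straightforward: pick a finite affine open cover $\{U_i\}$; since $\Spec W$ is a local scheme, any $W$-point whose closed point lands in $U_i$ factors through $U_i$, so $X(W) = \bigcup_i U_i(W)$ is a finite union of open compact subsets, hence compact. For an algebraic $W$-space of finite type the same strategy applies via a finite affine étale atlas; here one uses Cesnavi\v{c}ius's gluing framework to transfer the strong topology from the cover to $X(W)$. For Hausdorffness, the cleanest approach is to identify $X(W)$ with $\varprojlim_n X(W/p^n)$: each set $X(W/p^n)$ is finite (being the set of points of a finite-type scheme/algebraic space over the finite Artin ring $W/p^n$), and the inverse limit is then profinite. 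This identification can be established using Grothendieck--Hensel type lifting for étale charts over the henselian ring $W$.

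The main obstacle I anticipate is the algebraic-space case: one needs to know that the strong topology on $X(W)$ is correctly described by étale gluing, and that distinct $W$-points $x \neq y$ of $X$ can be separated topologically. The key technical input here should be the representability of the diagonal of an algebraic space together with the profiniteness established above, both of which ultimately rely on \cite{Cesnavicius} for the foundations.
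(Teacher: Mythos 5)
Your affine and scheme-level arguments are fine (and genuinely different from the paper, whose proof of Lemma \ref{lemma:Hdff} is a two-line citation of \cite[Propositions 2.9 \& 2.17]{Cesnavicius} after checking that the diagonal is quasi-compact and separated): the closed-embedding-into-$W^N$ step, the factorisation of $W$-points through affine opens via locality of $\Spec W$, and the observation that a continuous injection into the profinite set $\varprojlim_n X(W_n)$ forces Hausdorffness are all sound, although the injectivity of $X(W)\to\varprojlim_n X(W_n)$ does need an argument (it is where the separated, quasi-compact diagonal actually enters).

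The genuine gap is the algebraic-space case, which is the case the lemma is used for (e.g.\ for the atlas $Q_{Y;G}$). Your claim that ``the same strategy applies via a finite affine \'etale atlas'' fails: over the henselian local ring $W$ with finite, hence non-separably-closed, residue field $k$, a $W$-point of $X$ need not lift to an \'etale cover, only to one after an unramified extension of $W$. Concretely, take $X=\Spec W$ with atlas $U=\Spec W(k')$ for a nontrivial finite extension $k'/k$: then $U(W)=\emptyset$ while $X(W)$ is a point, so $X(W)$ is not the union of the images of the $U_i(W)$ and neither the compactness argument nor the ``transfer of the strong topology from the cover'' goes through as stated. The fallback you propose, identifying $X(W)$ with $\varprojlim_n X(W_n)$ by Hensel-type lifting along charts, runs into the same lifting problem for surjectivity and, more importantly, would at best produce a continuous bijection onto a profinite set; that yields Hausdorffness but not compactness of the strong topology, since compactness does not pull back along a continuous bijection. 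Indeed, the paper establishes that this bijection is a homeomorphism only in Lemma \ref{lemma:pro-finite}, as a consequence of the compactness asserted here, so invoking it at this point is circular unless you prove continuity of the inverse independently. To close the gap you either need the paper's route (the hypotheses on $\Delta_{X/W}$ plus the general results of \cite{Cesnavicius}) or a genuinely new argument for compactness of $X(W)$ for quasi-separated algebraic spaces that does not pretend every $W$-point lifts to the atlas.
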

\begin{proof}
This follows from \cite[Propositions 2.9(d) \& 2.17(8)]{Cesnavicius}. Since $W$ is Noetherian, every algebraic $W$-space of finite type satisfies the condition that the diagonal $\Delta_{X/W}$ is quasi-compact and separated.
\end{proof}

In fact, those compacts are Stone spaces (i.e., pro-finite sets), as shown by the following lemma.

\begin{lemma}\label{lemma:pro-finite}
Let $X/W$ be a an algebraic space of finite type over $W$, then the canonical map 
$$X(W) \to \varprojlim_{i \in \Nb} X(W_n)$$
is a homeomorphism, where the left-hand side is endowed with the $p$-adic topology and the right-hand side is the pro-finite 
topological space defined
by the inverse limit {of the finite sets}  in topological spaces.
\end{lemma}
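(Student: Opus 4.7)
The plan is to show both sides are compact Hausdorff and that the natural map is a continuous bijection, then invoke the fact that a continuous bijection between compact Hausdorff spaces is automatically a homeomorphism. The right-hand side is compact Hausdorff tautologically (an inverse limit of finite discrete sets in topological spaces), while the left-hand side is compact Hausdorff by the previous Lemma \ref{lemma:Hdff}.

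The first step is to verify that each $X(W_n)$, equipped with its strong topology, is finite and discrete. Since $p^n = 0$ in $W_n = W/p^nW$, the ring $W_n$ is finite of cardinality $q^n$ and carries the discrete topology. Choose a finite affine cover $\{U_i\}$ of $X_{W_n}$ by schemes of finite type over $W_n$; since $\Spec W_n$ is local, any $W_n$-point of $X$ factors through some $U_i$, so $X(W_n) = \bigcup_i U_i(W_n)$ with each $U_i(W_n)$ open in $X(W_n)$ (for algebraic spaces one passes via an étale chart, which is valid because $W_n$ is Henselian). Choosing closed embeddings $U_i \hookrightarrow \Ab^{N_i}_{W_n}$ realises $U_i(W_n)$ as a subset of the finite discrete set $W_n^{N_i}$, so $X(W_n)$ is finite and discrete.

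Continuity of $X(W) \to \varprojlim_n X(W_n)$ follows coordinatewise from functoriality of the strong topology with respect to the continuous (in fact, reductions modulo $p^n$) ring maps $W \to W_n$. For the bijectivity, injectivity comes from $p$-adic separatedness of $W$: if $x,y \in X(W)$ agree modulo every $p^n$, pick an affine open $U \subset X$ through which both factor (they must, by locality of $\Spec W$ and equality of images of the closed point), and observe that the corresponding maps from $\Oc(U)$ to $W$ agree modulo each $p^n$. For surjectivity, given a compatible system $(x_n)$, choose an étale chart $V \to X$ by an affine scheme of finite type over $W$ through which the residue-field point lifts; Henselianness of each $W_n$ provides a compatible lift of $(x_n)$ to $V(W_n)$, and $p$-adic completeness of $W$ assembles these into a $V$-point, whose image in $X(W)$ is the sought-after lift.

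With bijectivity and continuity established, the comparison is a continuous bijection between compact Hausdorff spaces and hence a homeomorphism. The main technical obstacle is the surjectivity step: for schemes it is essentially immediate from locality of $\Spec W_n$ and $p$-adic completeness of $W$, but in the algebraic-space setting one must argue through an étale chart and invoke Henselianness of the $W_n$ to produce compatible lifts — this is where \cite[Propositions 2.9(d) \& 2.17(8)]{Cesnavicius} (already used for Lemma~\ref{lemma:Hdff}) can be invoked to streamline the argument.
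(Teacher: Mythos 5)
Your proposal follows essentially the same route as the paper: continuity via functoriality of the strong topology with respect to $W \to W_n$, compactness of $X(W)$ from Lemma~\ref{lemma:Hdff}, and then the standard fact that a continuous bijection between compact Hausdorff spaces is a homeomorphism. The only difference is that you spell out the bijectivity (finiteness and discreteness of $X(W_n)$, injectivity from $p$-adic separatedness, surjectivity via an \'etale chart and Henselianness/completeness), which the paper asserts implicitly, delegating such details to \cite{Cesnavicius}.
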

\begin{proof}
The continuous ring homomorphism $W \to W_n$ induces a continuous map $X(W)\to X(W_n)$ for every $n\in \Nb$. Using the universal property of the inverse limit topology, we conclude that the map of the assertion above is continuous. By Lemma \ref{lemma:Hdff} it is a continuous bijection between compact Hausdorff spaces and thus a homeomorphism.
\end{proof}

\begin{lemma}\label{lemma:quotient-map}
Let $X/W$ be an algebraic $W$-space of finite type and $P\to X$ 
be a $G$-torsor  with respect to a smooth affine group scheme $G \to W$ with connected special fibre. Then, the $G(W)$-equivariant continuous map 
$$P(W) \to X(W)$$
is a quotient map, i.e., the continuous map
$$P(W)/G(W) \to X(W)$$
where the left-hand side is endowed with the quotient topology is a homeomorphism.
\end{lemma}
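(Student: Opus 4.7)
The plan is to reduce the statement to the basic point-set topological principle that a continuous bijection from a compact space to a Hausdorff space is automatically a homeomorphism. Since $G$ is affine and smooth of finite type over $W$ and $X$ is an algebraic $W$-space of finite type, the $G$-torsor $P$ is representable by an algebraic $W$-space of finite type. By Lemma \ref{lemma:Hdff}, both $P(W)$ and $X(W)$ are then compact Hausdorff spaces. The $G(W)$-equivariant map $P(W) \to X(W)$ factors through a continuous bijection (provided we establish surjectivity and the orbit description below) $\pi \colon P(W)/G(W) \to X(W)$ between a compact space (the quotient of the compact $P(W)$) and a Hausdorff space. Such a map is automatically a homeomorphism, so it suffices to prove that $P(W) \to X(W)$ is surjective with fibres exactly the $G(W)$-orbits.

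That the fibres are $G(W)$-orbits is automatic from the torsor formalism: any two $W$-points of $P$ mapping to the same $x \in X(W)$ are both sections of the pulled back $G$-torsor $x^*P \to \Spec W$, and hence differ by a unique element of $G(W)$. The main content of the argument is therefore the surjectivity of $P(W) \to X(W)$, which I would prove as follows. Fix $x \in X(W)$ and consider $Q := x^*P \to \Spec W$, a $G$-torsor and thus a smooth algebraic $W$-space of finite type. Its special fibre $Q_k$ is a $G_k$-torsor over the finite field $k$, and since $G_k$ is a connected smooth algebraic group over $k$, Lang's theorem applies and trivializes it, producing a $k$-rational point of $Q$. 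Using the infinitesimal lifting property of the smooth morphism $Q \to \Spec W$, this point lifts inductively to a compatible family of $W_n$-points of $Q$, and Lemma \ref{lemma:pro-finite} assembles such a family into a $W$-point of $Q$. This yields a lift of $x$ to $P(W)$.

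The only step with any real content is the surjectivity argument: the combination of Lang's theorem (which is precisely where the hypothesis on connectedness of the special fibre is used) with the smoothness of $Q \to \Spec W$ to lift from $k$ to $W$ through the intermediate $W_n$; everything else is a formal consequence of the topological preliminaries already established. In particular, openness of the map $P(W) \to X(W)$ need not be addressed directly, as it follows automatically from the compact-to-Hausdorff principle once bijectivity of $\pi$ has been verified.
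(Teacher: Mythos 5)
Your proposal is correct and follows essentially the same route as the paper: both establish bijectivity of $P(W)/G(W)\to X(W)$ via Lang's theorem on the special fibre together with smoothness of $G$ (the paper compresses this into the vanishing $H^1_{\text{\'et}}(\Spec W,G)=0$, while you unwind it as Lang plus Hensel-type lifting through the $W_n$ and Lemma \ref{lemma:pro-finite}), and both then conclude by the compact-Hausdorff continuous-bijection principle using Lemma \ref{lemma:Hdff}. Your explicit verification that the fibres are exactly the $G(W)$-orbits is a detail the paper leaves implicit, but the substance is identical.
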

\begin{proof}
Functoriality of the $p$-adic topology implies right away that there is a continuous and $G(W)$-equivariant map $P(W) \to X(W)$. The universal property of the quotient topology yields a continuous map $P(W)/G(W) \to X(W)$.

Lang's Lemma yields the vanishing  $H^1_{\text{\'et}}(\Spec W, G) = 0$, and thus bijectivity of the quotient map. By Lemma \ref{lemma:Hdff},  the spaces $P(W)$ and $X(W)$ are compact Hausdorff spaces.
So the map $P(W)/G(W) \to X(W)$ is a continuous bijection between compact Hausdorff spaces, which implies
 that it is a homeomorphism.
\end{proof}

\subsection{The case of a stack} \label{sec:stack}

This subsection is devoted to assigning a topological space to an $R$-stack. First, we introduce a new piece of notation.

\begin{notation}
 Let $\Xc$ be a stack and $S$ be a scheme. We denote by $\Xc(S)$ the groupoid of $S$-points of $\Xc$ and by $\Xc(S)^{\iso}$ the set of isomorphism classes of $S$-points.    
\end{notation}

The following definition appeared in \cite[Section 2.4]{Cesnavicius}, and is based on \cite[Section 2]{Moret}, where the case of fields was covered.

\begin{definition}
A subset $U \subset \Xc(R)^{\iso}$ is called open if for every $R$-scheme $X$ locally of finite type over $R$ and for every morphism $g\colon X \to \Xc$, 
$g^{-1}(U)\subset X(R)$ is open with respect to the strong topology on $X(R)$.     
\end{definition}

For a large class of stacks, for example for  linear algebraic stacks which are defined below, we may render the definition of this topology more concrete.

\begin{definition}\label{defi:linear-quotient}
An $R$-stack $\Xc$ is called a \emph{linear quotient stack} if there exists a smooth and linear group $R$-scheme $G$ and an fppf $G$-torsor $Y \to \Xc$ such that $Y$ is an algebraic space. 
   We refer to such a torsor $Y\to \Xc$ as a linear atlas of $\Xc$.
\end{definition}

We emphasise that a linear group scheme $G$ is a closed subscheme of the general linear group $\GL_{N,R}$, and hence is of finite type.

Since $Y$ is a $G$-torsor, there is a canonical equivalence $\Xc \simeq [Y/G]$  where 
$[Y/G]$ is the quotient stack. In particular $\Xc$ is an algebraic stack.
Furthermore, we will frequently use the well-known observation that an embedding $G \hookrightarrow \GL_{N,R}$ allows one to replace the quotient $[Y/G]$ by 
$$\Xc \cong \big[Q_{Y;G}/\GL_{N,R}\big],$$
where $Q_{Y;G}$ denotes the quotient (as defined in the theory of algebraic spaces)
$G\backslash (Y\times_R \GL_{N,R})$ with respect to the anti-diagonal action. 

\begin{lemma}
 The quotient space $Q_{Y;G}$ defined above is an algebraic space.   
\end{lemma}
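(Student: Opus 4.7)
The plan is to realise $Q_{Y;G}$ as the quotient of an algebraic space by a scheme-theoretically free action of the smooth affine group scheme $G$, and then to invoke the standard result that such quotients exist in the category of algebraic spaces. Concretely, I would form the stack-theoretic quotient $[(Y \times_R \GL_{N,R})/G]$ with respect to the anti-diagonal action $g \cdot (y, h) = (g \cdot y, g^{-1}h)$, show that this stack has trivial inertia, and conclude that it is an algebraic space — which I would then identify with the sheaf-theoretic quotient $Q_{Y;G}$ appearing in the statement.

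The first key step is to check that the anti-diagonal action is free in the scheme-theoretic sense. This is essentially immediate because the inclusion $G \hookrightarrow \GL_{N,R}$ is a closed immersion, hence a monomorphism: for any $R$-scheme $T$ and any $T$-point $(y,h)$, the stabiliser subgroup scheme inside $G \times_R T$ is cut out by the equation $g^{-1}h = h$, which forces $g = 1$. So the stabilisers are trivial at every $T$-point, and in fact the second-factor action alone already suffices. Now $Y \times_R \GL_{N,R}$ is an algebraic space (a product of an algebraic space with a smooth affine $R$-scheme), $G$ is smooth affine, and therefore the quotient stack $[(Y \times_R \GL_{N,R})/G]$ is an algebraic stack. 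Since the action is free, its inertia is trivial, which forces this stack to be an algebraic space.

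A useful alternative viewpoint, which can serve as a sanity check, is that $Q_{Y;G}$ is the contracted product $Y \times^{G} \GL_{N,R}$, exhibiting it as the $\GL_{N,R}$-torsor over $\Xc = [Y/G]$ obtained from the $G$-torsor $Y \to \Xc$ by push-out along $G \hookrightarrow \GL_{N,R}$. The main technical obstacle is really just bookkeeping: ensuring that freeness holds scheme-theoretically rather than merely at geometric points, and matching up the various candidate notions of quotient (stack-theoretic, fppf-sheaf-theoretic, and the one coming from the theory of algebraic spaces). All of these coincide under our hypotheses precisely because the action is free and $G$ is smooth affine, so nothing deeper is actually at stake beyond invoking the appropriate existence theorem.
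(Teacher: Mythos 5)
Your argument is correct, but it follows a different route from the paper. You verify that the anti-diagonal $G$-action on $Y \times_R \GL_{N,R}$ is scheme-theoretically free (indeed the translation action on the $\GL_{N,R}$-factor alone kills every stabiliser, exactly as you say), observe that the quotient stack $[(Y\times_R \GL_{N,R})/G]$ is algebraic because $G$ is smooth, affine and of finite presentation, and then invoke the criterion that an algebraic stack with trivial inertia is an algebraic space, finally identifying this stack with the fppf quotient sheaf $Q_{Y;G}$. The paper instead argues formally with a diagram of cartesian squares: it exhibits $Q_{Y;G}$ as the fibre product $\Xc \times_{BG} (H/G)$ with $H=\GL_{N,R}$, notes that $\Xc=[Y/G] \to BG$ is representable (its base change along $\Spec R \to BG$ is $Y$, an algebraic space), and concludes that the base change of this representable morphism over the algebraic space $H/G$ is an algebraic space. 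Your proof is more self-contained in that it does not presuppose algebraicity of the homogeneous space $\GL_{N,R}/G$ — that fact would itself follow from your freeness-plus-trivial-inertia argument applied to the translation action — at the cost of invoking the inertia criterion and matching the stacky quotient with the sheaf quotient, which you rightly flag as the bookkeeping step; the paper's proof is shorter and purely diagrammatic, and its cartesian square also makes visible the torsor structure $Q_{Y;G}\to\Xc$, which is exactly the contracted product $Y\times^G \GL_{N,R}$ you mention as a sanity check. Both arguments are standard and complete at the level of detail expected here.
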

\begin{proof}
For the sake of simplifying the notation we  denote $\GL_{N,R}$ by $H$. There is a commutative diagram of $R$-stacks with cartesian squares
\[
\xymatrix{
Y \ar[r] \ar[d] & \Spec R \ar[d] & & \\
Q_{Y;G} \ar[r] \ar[d] & H / G \ar[r] \ar[d] & \Spec R \ar[d]  \\
\Xc \ar[r] & BG \ar[r] & BH
}
\]
 The morphism $\Spec R \to H/G$ is induced by the unit element of the $R$-scheme $H$. The morphism $H/G \to BG = [\Spec R / G]$ is induced by the $G$-invariant structure morphism $H \to \Spec R$.
The morphism $\Xc = [Y/G] \to BG=[\Spec R/G]$ is induced by the $G$-invariant morphism $Y \to \Spec R$, and thus is a representable morphism of algebraic stacks. In particular, we obtain that the fibre product $Q_{Y;G}$ is an algebraic space.
\end{proof}

This construction allows us to assume without loss of generality that the affine group $R$-scheme $G$ 
 is the general linear group.

\begin{proposition}\label{prop:quotient-stack}
Let $\Xc$ be a linear quotient $W$-stack which is locally of finite presentation. For every $G$-torsor $\pi\colon Y \to \Xc$ where $Y$ is an algebraic space 
 and $G$ is a smooth linear group $W$-scheme with connected special fibre, the map
$$Y(W) \to \Xc(W)^{\iso}$$
{on $W$-points}
is a topological quotient map. That is, there is a homeomorphism $$\Xc(W)^{\iso} \cong Y(W)/G(W).$$
\end{proposition}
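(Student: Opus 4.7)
My strategy is to reduce the statement to the algebraic-space analog already proved as Lemma \ref{lemma:quotient-map}, by base-changing $\pi\colon Y\to \Xc$ along test morphisms $g\colon T \to \Xc$ from finite-type $W$-schemes. The map $q\colon Y(W)\to\Xc(W)^{\iso}$ is $G(W)$-equivariant and automatically continuous (by functoriality of the strong topology together with the very definition of the topology on $\Xc(W)^{\iso}$), and hence factors through a continuous bijection $Y(W)/G(W) \to \Xc(W)^{\iso}$; the content of the proposition is that this factorisation is defined on all of $\Xc(W)^{\iso}$ and is open.

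For surjectivity of $q$, a point $\xi\in\Xc(W)^{\iso}$ is represented by some morphism $\Spec W\to\Xc$, and pulling $\pi$ back along this morphism yields a $G$-torsor $P$ over $\Spec W$. Triviality of this torsor reduces to the vanishing $H^1_{\et}(\Spec W, G)=0$: Lang's lemma gives $H^1_{\et}(\Spec k,G_k)=0$ because $G_k$ is smooth and connected, and the smoothness of $G$ together with the $p$-adic completeness of $W$ allow one to lift the $k$-point of $P$ to a $W$-point by successive deformation along the nilpotent thickenings $W_n\hookrightarrow W_{n+1}$. This $W$-point of $P$ provides a lift of $\xi$ to $Y(W)$.

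For the openness of the factorisation, I assume $q^{-1}(U) \subset Y(W)$ is open and let $g\colon T\to\Xc$ be a morphism from a finite-type $W$-scheme $T$; I must show $g^{-1}(U)$ is open in $T(W)$. The fibre product $Z := T\times_\Xc Y$ is a $G$-torsor over $T$, representable by an algebraic space of finite type over $W$ because $\pi$ is representable. Writing $p_T,p_Y$ for the projections from $Z$ to $T$ and $Y$ respectively, a direct check at the level of $W$-points gives the set-theoretic equality $p_T^{-1}(g^{-1}(U)) = p_Y^{-1}(q^{-1}(U))$ inside $Z(W)$, since both sides parameterise triples $(t,y,\varphi)$ with $\varphi\colon g(t)\simeq\pi(y)$ an isomorphism in $\Xc(W)$ and common image $[\pi(y)]=[g(t)]\in U$. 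Continuity of $p_Y$ makes the right-hand side open, and Lemma \ref{lemma:quotient-map}, applied to the $G$-torsor $p_T\colon Z\to T$ (whose hypotheses are met, since $G$ has connected special fibre and $Z$ is an algebraic space of finite type), then concludes that $g^{-1}(U)$ is open in $T(W)$.

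I expect the subtlest point to be verifying that the topology on $\Xc(W)^{\iso}$, defined in Definition 2.8 via \emph{all} locally-of-finite-type $W$-schemes, can be probed equivalently using only \emph{finite-type} $W$-schemes, as this is needed to invoke Lemma \ref{lemma:quotient-map}. This should reduce to the standard fact that the strong topology on the $W$-points of a locally-of-finite-type scheme is determined locally by its finite-type affine open cover, so openness can indeed be tested Zariski-locally in this sense.
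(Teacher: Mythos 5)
Your argument is correct and follows essentially the same route as the paper: surjectivity of $Y(W)\to\Xc(W)^{\iso}$ via Lang's lemma on the special fibre plus smoothness/Henselian lifting over $W$, and openness by base-changing the torsor along test morphisms $T\to\Xc$ and invoking Lemma \ref{lemma:quotient-map} for the resulting torsor $Z\to T$. The only (harmless) difference is your explicit reduction to finite-type test schemes, a point the paper glosses over when applying the finite-type Lemma \ref{lemma:quotient-map} to locally-of-finite-type test objects; your justification via affine finite-type opens covering $T(W)$ is exactly what is needed.
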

\begin{proof}
We note that $Y$ an algebraic space and thus 
$Y(W)^{\rm iso}=Y(W)$.
Since $G/W$ is smooth and has connected special fibre, every $k$-rational point $\bar{x}$ of $\Xc$ lifts to $Y$. Indeed, the fibre $\pi^{-1}(\bar{x})$ is a $G_{k}$-torsor, and thus trivial by 
Lang's Lemma.

As $G$ is smooth, 
Hensel's lemma implies
that every $W$-point $x \in \Xc(W)$ lifts to a $W$-point of $Y$. This shows that the map $\pi\colon Y(W) \to \Xc(W)$ is surjective.  

Let $\tau\colon Z \to \Xc$ be a morphism, where $Z$ is a $W$-scheme locally of finite type.
We denote by $Q$ the base change $Y \times_{\Xc} Z$, which is itself an algebraic space by virtue of representability of the map $\pi$.
Furthermore, the base change $\sigma\colon Q \to Z$ is a $G$-torsor.

We now have the following commutative diagram in the category of topological spaces:
\[
\xymatrix{
Q(W) \ar[r]^{\sigma} \ar[d]_{\rho} & Z(W) \ar[d]^{\tau} \\
Y(W) \ar[r]^{\pi} & \Xc(W)^{\iso}.
}
\]
Let us assume that $U \subset \Xc(W)^{\iso}$ has the property that $\pi^{-1}(U)$ is open. Then, by continuity of $\rho$, the pre-image $\rho^{-1}(\pi^{-1}(U))= \sigma^{-1}(\tau^{-1}(U))$ is also open. Since by Lemma~\ref{lemma:quotient-map}. 
$\sigma$ is a quotient map,
 $\tau^{-1}(U) \subset Z(W)$ is open. This argument applies to all possible choices of a locally of finite type $R$-scheme $Z$ and a morphism $\tau$, and thus shows that $U \subset \Xc(W)^{\iso}$ is open in the strong topology. This shows that $\pi$ is a quotient map.
\end{proof}

\begin{lemma}\label{lemma:Stone-space2}
Let $\Xc$ be a linear quotient stack of finite type over $W$. Then, $\Xc(W)^{\iso}$ is a Stone space, i.e., a pro-finite set, or equivalently, a compact totally disconnected Hausdorff space.    
\end{lemma}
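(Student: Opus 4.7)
My plan is to identify $\Xc(W)^{\iso}$ with the inverse limit $\varprojlim_n \Xc(W_n)^{\iso}$, realising it as a pro-finite set in direct analogy with Lemma~\ref{lemma:pro-finite}. Since $\Xc$ is a linear quotient stack, the $\GL_N$-trick explained in the discussion preceding Proposition~\ref{prop:quotient-stack} lets me assume without loss of generality that $\Xc = [Y/G]$ where $G = \GL_{N,W}$ (so has connected special fibre) and $Y$ is an algebraic space of finite type over $W$. Proposition~\ref{prop:quotient-stack} then supplies a topological identification $\Xc(W)^{\iso} = Y(W)/G(W)$. The same argument (Lang's lemma over $k$ to trivialise $G$-torsors on $\Spec k$, combined with smoothness of $G$ to lift such trivialisations along $W_n \twoheadrightarrow k$) gives $\Xc(W_n)^{\iso} = Y(W_n)/G(W_n)$, which is a finite set because $Y/W$ is of finite type and $W_n$ is finite.

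Next I will check that $\Xc(W)^{\iso}$ is compact Hausdorff. Compactness is immediate from the quotient description and Lemma~\ref{lemma:Hdff}. For Hausdorffness I will argue that the orbit equivalence relation on $Y(W)$ is closed: it is the image of the continuous action map $G(W) \times Y(W) \to Y(W) \times Y(W)$, $(g,y) \mapsto (y, g \cdot y)$, whose source is compact by Lemma~\ref{lemma:Hdff} and whose target is Hausdorff, so this image is automatically closed. The natural map $\Xc(W)^{\iso} \to \varprojlim_n \Xc(W_n)^{\iso}$ is continuous by functoriality, and its target is an inverse limit of finite discrete sets, hence a Stone space. Once I prove the map is bijective it will be a homeomorphism, since a continuous bijection between compact Hausdorff spaces is always a homeomorphism, and the conclusion will follow.

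The bulk of the work, and the main obstacle, is bijectivity. For injectivity, given $x, x' \in \Xc(W)$ with matching image in every $\Xc(W_n)^{\iso}$, I will lift them to $\tilde x, \tilde x' \in Y(W)$ and consider the nested family of closed sets
\[
T_n \defeq \{\, g \in G(W) : (g \cdot \tilde x)|_{W_n} = \tilde x'|_{W_n} \,\} \subset G(W),
\]
each nonempty because a witness $g_n \in G(W_n)$ lifts to $G(W)$ by smoothness of $G$. Compactness of $G(W)$ then forces $\bigcap_n T_n \neq \emptyset$, and any $g$ in the intersection satisfies $g \cdot \tilde x = \tilde x'$ in $Y(W) = \varprojlim_n Y(W_n)$. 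For surjectivity, I will lift a compatible family $([x_n])$ inductively to a compatible family $\tilde x_n \in Y(W_n)$: having chosen $\tilde x_n$, pick any lift $\tilde y_{n+1} \in Y(W_{n+1})$ of $x_{n+1}$, use the equality $[\tilde y_{n+1}|_{W_n}] = [\tilde x_n]$ in $Y(W_n)/G(W_n)$ to produce $h_n \in G(W_n)$ with $h_n \cdot \tilde y_{n+1}|_{W_n} = \tilde x_n$, lift $h_n$ to $\hat h_n \in G(W_{n+1})$ by smoothness of $G$, and set $\tilde x_{n+1} \defeq \hat h_n \cdot \tilde y_{n+1}$. The compactness argument in the injectivity step is the delicate one; it is where the fact that we are taking points in $W$, rather than in a more general topological ring, is essential.
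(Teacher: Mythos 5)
Your proof is correct, but it follows a genuinely different route from the paper's. The paper works directly with the quotient presentation $\Xc(W)^{\iso}\cong P(W)/G(W)$ from Proposition \ref{prop:quotient-stack}: compactness and Hausdorffness are inherited from $P(W)$ because $G(W)$ is compact, and total disconnectedness is proved by hand --- one lifts two distinct points to $P(W)$, picks a compact-open $\tilde{U}$ around one lift missing the closed $G(W)$-orbit of the other, and descends the saturation $G(W)\cdot\tilde{U}$ to a clopen separating neighbourhood. The identification with $\varprojlim_n \Xc(W_n)^{\iso}$ appears only afterwards (Lemma \ref{lemma:basis} and the subsequent corollary), where injectivity of $\beta$ is deduced from the Hausdorffness just established and surjectivity invokes Grothendieck's Existence Theorem. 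You instead prove the homeomorphism $\Xc(W)^{\iso}\cong\varprojlim_n\Xc(W_n)^{\iso}$ directly at the level of a $\GL_N$-atlas: injectivity via the nested sets $T_n\subset G(W)$ (each is clopen since $Y(W)\to Y(W_n)$ is continuous with finite discrete target, nonempty because $G(W)\to G(W_n)$ is surjective by smoothness, and the intersection is nonempty by compactness of $G(W)$), and surjectivity by inductive lifting using smoothness of $G$ and of the torsor $Y\to\Xc$ together with Lang's lemma, finishing with $Y(W)\cong\varprojlim_n Y(W_n)$ (Lemma \ref{lemma:pro-finite}). This buys you, in effect, the paper's later corollary without routing through Hausdorffness and without a separate appeal to Grothendieck existence at the stack level (that input is still hidden in Lemma \ref{lemma:pro-finite}), at the cost of a longer argument; note also that your separate Hausdorffness step via the closed orbit relation is strictly speaking superfluous, since a continuous bijection from a quasi-compact space onto the Hausdorff space $\varprojlim_n\Xc(W_n)^{\iso}$ is automatically a homeomorphism. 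Your explicit reduction to $G=\GL_N$, needed to secure the connected special fibre hypothesis of Proposition \ref{prop:quotient-stack}, is legitimate and is the same device the paper uses implicitly.
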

\begin{proof}
Let $P \to \Xc$ be a $G$-torsor 
 where $P$ is an algebraic $W$-space of finite type, 
providing us with a linear atlas for $\Xc$. By Proposition \ref{prop:quotient-stack}, the $p$-adic topological space $\Xc(W)^{\iso}$ is homeomorphic to the quotient space $P(W)/G(W)$. Since $\Xc$ and $G$ are of finite type over $W$, so is $P$, and we conclude that $P(W)$ is a compact Hausdorff space, as guaranteed by \cite[Propositions 2.9(d) \& 2.17(2)]{Cesnavicius}. These properties are directly inherited to the quotient $P(W)/G(W)$ 
as the group $G(W)$ is compact.

It remains to check that this quotient of $P(W)$ is totally disconnected. Let $x$ and $y$ be two distinct points in $\Xc(W)^{\iso}$. It suffices to construct disjoint open neighbourhoods $U$ and $V$ of $x$ respectively $y$ such that $\Xc(W)^{\iso}=U\cup V$.

Let $\tilde{x}$ be a lift of $x$ to $P(W)$ and likewise for $\tilde{y}$. Since $P(W)$ is a pro-finite set (see Lemma \ref{lemma:pro-finite}) we conclude that there exists a compact-open neighbourhood $\tilde{U}$ of $\tilde{x}$ which does not intersect the closed orbit $G(W)\cdot{}\tilde{y}$. 

We infer that $G(W)\cdot{} \tilde{U}$ is a $G(W)$-invariant compact-open neighbourhood of $G(W) \cdot{} \tilde{x}$ which does not intersect $G(W)\cdot{} \tilde{y}$. By definition of the quotient topology, it thus descends to a compact-open neighbourhood $U$ of $x$. By compactness, its complement $V$ is open, and by construction it is a neighbourhood of $y$.
\end{proof}

There is a concrete realisation of $\Xc(W)^{\iso}$ as a pro-finite set, given by the inverse limit
\begin{equation}\label{eqn:inverse-limit}
    \beta\colon \Xc(W)^{\iso} \xrightarrow{\cong} \varprojlim_{n\in \Nb} \Xc(W_n)^{\iso}.
\end{equation}
Note that the spaces $\Xc(W_n)^{\iso}$ are discrete and finite, and that the natural maps
  $\Xc(W)^{\iso} \to \Xc(W_n)^{\iso}$ are continuous. The verification that this map is a homeomorphism is the content of the following two assertions. We denote by $$\pi_{n}: 
   \varprojlim_{n'\in \Nb} \Xc(W_{n'})^{\iso}
  \to \Xc(W_n)^{\iso}$$ the natural map.

\begin{lemma}\label{lemma:basis}
 Let $\Xc$ be a linear quotient stack of finite type over $W$. There is a basis for the $p$-adic topology on $\Xc(W)^{\iso}$ given by the collection of all pre-images 
 $$(\pi_n\circ \beta)^{-1}(M)$$
 where $M\subset \Xc(W_n)^{\iso}$ is an arbitrary subset.
\end{lemma}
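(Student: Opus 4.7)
The plan is to establish the two defining properties of a basis for the proposed collection of pre-images: first, that every such set is open in the $p$-adic topology, and second, that every open subset of $\Xc(W)^{\iso}$ can be written as a union of such sets.

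The first property is immediate. The composition $\pi_n \circ \beta \colon \Xc(W)^{\iso} \to \Xc(W_n)^{\iso}$ is continuous, being induced by the ring homomorphism $W \to W_n$ via functoriality of the strong topology on stacks, and since $\Xc$ is of finite type and $W_n$ is finite the target is a finite, hence discrete, set. Consequently every subset $M \subset \Xc(W_n)^{\iso}$ is open and $(\pi_n \circ \beta)^{-1}(M)$ is open in $\Xc(W)^{\iso}$.

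For the second property, I would fix an open subset $U \subset \Xc(W)^{\iso}$, a point $x \in U$, and a linear atlas $q \colon Y \to \Xc$ with smooth structure group $G$ having connected special fibre (after the reduction to $G = \GL_{N,W}$ discussed before Proposition~\ref{prop:quotient-stack}). By Proposition~\ref{prop:quotient-stack} the induced map $q \colon Y(W) \to \Xc(W)^{\iso}$ is a topological quotient map, and the same argument applied over each finite ring $W_n$ (Lang's lemma trivialises $G$-torsors over $k$, and smoothness of $G$ lifts these trivialisations to $W_n$) identifies $\Xc(W_n)^{\iso}$ set-theoretically with $Y(W_n)/G(W_n)$. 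Setting $V = q^{-1}(U)$ and choosing a lift $\tilde{x} \in V$ of $x$, Lemma~\ref{lemma:pro-finite} tells us that $Y(W) \cong \varprojlim_n Y(W_n)$ as topological spaces, so the sets $\sigma_n^{-1}(\tilde{N})$ (for $\sigma_n \colon Y(W) \to Y(W_n)$ the reduction and $\tilde{N} \subset Y(W_n)$ arbitrary) form a basis at $\tilde{x}$; we may therefore choose $n$ and $\tilde{N}$ with $\tilde{x} \in \sigma_n^{-1}(\tilde{N}) \subset V$.

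The main technical point, and the only place where some care is needed, is to saturate $\tilde{N}$ into a $G(W_n)$-invariant subset of $Y(W_n)$ without leaving $V$, so that the resulting basic open descends along the quotient. For this I would invoke surjectivity of $G(W) \to G(W_n)$ (smoothness of $G$ and Hensel's lemma), which implies the identity
$$G(W) \cdot \sigma_n^{-1}(\tilde{N}) \;=\; \sigma_n^{-1}\!\bigl(G(W_n) \cdot \tilde{N}\bigr).$$
The left-hand side is contained in $V$ by the $G(W)$-invariance of $V$, hence so is the right-hand side. Letting $M \subset \Xc(W_n)^{\iso} = Y(W_n)/G(W_n)$ denote the image of the $G(W_n)$-invariant subset $G(W_n) \cdot \tilde{N}$, one checks that $q^{-1}\!\bigl((\pi_n \circ \beta)^{-1}(M)\bigr) = \sigma_n^{-1}\!\bigl(G(W_n) \cdot \tilde{N}\bigr)$, so $(\pi_n \circ \beta)^{-1}(M) \subset U$; moreover $x \in (\pi_n \circ \beta)^{-1}(M)$ since $\tilde{x} \in \sigma_n^{-1}(\tilde{N})$. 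This exhibits a basic open containing $x$ and contained in $U$, completing the argument.
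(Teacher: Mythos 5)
Your proof is correct and takes essentially the same route as the paper: openness of the basic sets via continuity of the reduction maps (the paper routes this through the atlas and the quotient map of Proposition~\ref{prop:quotient-stack}, but the content is the same), and then, for an arbitrary open $U$, passage to the linear atlas, the pro-finite description of its $W$-points from Lemma~\ref{lemma:pro-finite}, and saturation of a basic open using that $G(W)$ acts on the finite set of $W_n$-points through $G(W_n)$, i.e.\ surjectivity of $G(W)\to G(W_n)$. Your explicit treatment of the saturation identity $G(W)\cdot\sigma_n^{-1}(\tilde N)=\sigma_n^{-1}(G(W_n)\cdot\tilde N)$ and of the identification $\Xc(W_n)^{\iso}\cong Y(W_n)/G(W_n)$ spells out what the paper's very terse final step leaves implicit.
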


\begin{proof}
The collection of subsets $\{\pi_n^{-1}(M) | n \in \Nb\text{ and }M \subset \Xc(W_n)^{\iso}\}$ is a basis for the inverse limit topology. The assertion above is therefore equivalent to 
 $$\{(\pi_n\circ \beta)^{-1}(M) | n \in \Nb\text{ and }M \subset \Xc(W_n)^{\iso}\}$$
 being a basis for the $p$-adic topology on $\Xc(W)^{\iso}$.

  We fix a linear atlas $f\colon P \to \Xc$. By functoriality  of the $p$-adic topology, there is a commutative diagram 
 \[
 \xymatrix{
P(W) \ar[r]^{\tau_n} \ar[d]_{f(W)} & P(W_n) \ar[d]^{f(W_n)} \\
\Xc(W)^{\iso} \ar[r]^{\pi_n\circ \beta} & \Xc(W_n)^{\iso}
 }
 \]
 where the top row
 $\tau_n: P(W)\to \varprojlim_{n\in \mathbb N} P(W_n)\to P(W_n)$
  is $G(W)$-equivariant with respect to the natural action. On the  discrete space $P(W_n)$, the group $G(W)$ acts through the finite quotient $G(W_n)$. The vertical arrows are invariant with respect to the $G(W)$-action. 

 A subset of the shape $(\pi_n\circ \beta)^{-1}(M)$ is equal to the quotient of the open subset $\tau_n^{-1}(f(W_n)^{-1}(M))$ by $G(W)$. Therefore, it is open in the $p$-adic topology.

This shows that $U = \bigcup_{i \in I} \beta^{-1}\pi_{n_i}^{-1}(M_i)$, which concludes the proof that this collection of sets is a basis.
\end{proof}

\begin{corollary}
 Let $\Xc$ be a linear quotient stack of finite type over $W$. The map of equation \eqref{eqn:inverse-limit} is a homeomorphism. In particular, a subset $U\subset \Xc(W)^{\iso}$ is open if and only if it is the pre-image $\beta^{-1}(V)$ of an open subset $V$ of the inverse limit.
\end{corollary}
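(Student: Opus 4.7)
The plan is to reduce the corollary to showing that $\beta$ is a bijection. The map $\beta$ is continuous by construction, and Lemma \ref{lemma:Stone-space2} shows the source $\Xc(W)^{\iso}$ is a compact Hausdorff Stone space; the target $\varprojlim_n \Xc(W_n)^{\iso}$ is also compact Hausdorff (a pro-finite set, being an inverse limit of finite discrete spaces). A continuous bijection between compact Hausdorff spaces is automatically a homeomorphism, and the ``in particular'' clause follows at once from this. So the whole task will be to establish bijectivity of $\beta$.

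For surjectivity, I fix a linear atlas $\pi\colon P \to \Xc$ with smooth linear $W$-group scheme $G$ of connected special fibre, so that Proposition \ref{prop:quotient-stack} identifies $\Xc(W)^{\iso} = P(W)/G(W)$; the same Hensel-and-Lang argument gives $\Xc(W_n)^{\iso} = P(W_n)/G(W_n)$ at each finite level. Given a compatible system $(\xi_n)_n \in \varprojlim_n \Xc(W_n)^{\iso}$, I construct a compatible tower of representatives $\bar x_n \in P(W_n)$ by induction on $n$. Starting from any $\bar x_1 \in P(k)$ representing $\xi_1$, and assuming $\bar x_n$ is constructed, I pick any $\bar y_{n+1} \in P(W_{n+1})$ representing $\xi_{n+1}$; its reduction $\bar y_{n+1} \bmod p^n$ represents $\xi_n$, hence differs from $\bar x_n$ by some $g_n \in G(W_n)$, which lifts to $\tilde g_n \in G(W_{n+1})$ by smoothness of $G$. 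Then $\bar x_{n+1} := \bar y_{n+1} \cdot \tilde g_n^{-1}$ represents $\xi_{n+1}$ and reduces to $\bar x_n$. By Lemma \ref{lemma:pro-finite}, the resulting tower assembles to an element $x \in P(W)$ whose image in $\Xc(W)^{\iso}$ maps to $(\xi_n)_n$.

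For injectivity, suppose $[x], [y] \in \Xc(W)^{\iso}$ have equal images under $\beta$. I pick lifts $x, y \in P(W)$ and consider the closed subsets
\[
Z_n \;=\; \{\, g \in G(W) \,:\, x \equiv y \cdot g \pmod{p^n}\,\} \;\subset\; G(W).
\]
Each $Z_n$ is non-empty (any $g_n \in G(W_n)$ with $x \bmod p^n = (y \bmod p^n) \cdot g_n$ lifts to $G(W)$ by smoothness of $G$), the chain is decreasing, and $G(W)$ is compact by Lemma \ref{lemma:Hdff} since $G$ is of finite type over $W$. Hence $\bigcap_n Z_n$ is non-empty, and any $g$ in the intersection satisfies $x = y \cdot g$ in $P(W) = \varprojlim_n P(W_n)$, so $[x] = [y]$. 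I expect this compactness-plus-smoothness step to be the crux of the proof: surjectivity is a routine inductive adjustment of representatives, whereas injectivity genuinely requires compactness of $G(W)$ to glue an unrelated family of mod-$p^n$ relations into a single $W$-level relation.
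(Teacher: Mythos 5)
Your proof is correct, but it takes a different route from the paper. The paper deduces the ``in particular'' clause directly from Lemma \ref{lemma:basis}, proves injectivity of $\beta$ by a soft separation argument (two points with the same image could not be separated by the basic opens of Lemma \ref{lemma:basis}, contradicting the Hausdorffness from Lemma \ref{lemma:Stone-space2}), and gets surjectivity by invoking Grothendieck's Existence Theorem for the stack; the homeomorphism then follows as in your reduction. You instead work entirely at the level of a linear atlas $P \to \Xc$ and the group $G$: surjectivity by the successive-adjustment-of-representatives argument, using triviality of $G$-torsors over $W_n$ (Lang plus infinitesimal lifting, exactly as in Proposition \ref{prop:quotient-stack} but at finite level) together with Lemma \ref{lemma:pro-finite} for the algebraic space $P$; injectivity by the nested compact sets $Z_n \subset G(W)$, using compactness from Lemma \ref{lemma:Hdff}. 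Both steps are sound, and your version has the merit of being explicit and of not using Lemma \ref{lemma:basis} at all (beyond the continuity of the maps $\Xc(W)^{\iso}\to\Xc(W_n)^{\iso}$, which you call ``continuity by construction'' but which is really the content of that lemma's proof, so you are still implicitly leaning on it). Note also that you have not eliminated the algebraization input, only relocated it: the surjectivity of $P(W)\to\varprojlim P(W_n)$ asserted in Lemma \ref{lemma:pro-finite} is where the appeal to formal-to-algebraic comparison now lives, whereas the paper invokes Grothendieck existence for $\Xc$ directly. In exchange, your argument reduces the stacky statement to the algebraic-space case plus torsor triviality, which is arguably more self-contained.
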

\begin{proof}
The second assertion follows directly from Lemma \ref{lemma:basis}. Indeed, the topology generated by this basis agrees with the collection of pre-images of open subsets in the inverse limit.
We claim that the map $\beta$ must be injective since $\Xc(W)^{\iso}$ is Hausdorff (see Lemma \ref{lemma:Stone-space2}). Indeed, if two distinct points of $\Xc(W)$ were sent to the same point of the inverse limit, then it would be impossible to separate them by the open subsets provided by Lemma \ref{lemma:basis}. Surjectivity of $\beta$ follows from  Grothendieck's Existence Theorem. We conclude that $\beta$ is a homeomorphism.
\end{proof}

\begin{corollary}\label{cor:continuous}
 Let $T$ be an arbitrary topological space. A set-theoretic map $T\to \Xc(W)^{\iso}$ is continuous if and only if the composition $T \to \varprojlim \Xc(W_n)^{\iso}$ is continuous.    
\end{corollary}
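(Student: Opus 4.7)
The plan is to deduce this immediately from the preceding corollary, which establishes that the comparison map $\beta\colon \Xc(W)^{\iso} \to \varprojlim_n \Xc(W_n)^{\iso}$ is a homeomorphism. Since the asserted equivalence of continuity is symmetric in $f\colon T \to \Xc(W)^{\iso}$ and its composite $\beta \circ f$, and composition with a homeomorphism preserves continuity in both directions, no further content is required beyond citing the previous result.

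More concretely, I would argue as follows. First, if $f\colon T \to \Xc(W)^{\iso}$ is continuous, then since $\beta$ is continuous (indeed a homeomorphism by the preceding corollary), the composite $\beta \circ f\colon T \to \varprojlim_n \Xc(W_n)^{\iso}$ is continuous as a composition of continuous maps. Conversely, assume $\beta \circ f$ is continuous. Using again that $\beta$ is a homeomorphism, the inverse $\beta^{-1}$ is continuous, and hence $f = \beta^{-1} \circ (\beta \circ f)$ is continuous as a composition of continuous maps.

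There is no real obstacle here: the work has already been carried out in Lemma \ref{lemma:basis} and the preceding corollary, which identified the $p$-adic topology on $\Xc(W)^{\iso}$ with the inverse limit topology on $\varprojlim_n \Xc(W_n)^{\iso}$. The present statement is merely the universal property of the inverse limit topology expressed through this identification, and serves as a convenient criterion for checking continuity of maps into $\Xc(W)^{\iso}$ in later sections of the paper, where one wishes to test continuity mod $p^n$ for each $n$ rather than directly in the $p$-adic topology.
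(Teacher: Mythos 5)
Your proof is correct and matches the paper's intent: the paper states this corollary without proof precisely because, as you observe, it follows immediately from the preceding corollary identifying $\beta$ as a homeomorphism, so that continuity of $f$ and of $\beta\circ f$ are equivalent by composing with $\beta$ and $\beta^{-1}$.
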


\section{The Higgs-de Rham flow revisited}

The purpose of this section is twofold: to recast the Higgs-de Rham flow defined in \cite{LSZ} by Lan--Sheng--Zuo using Xu's inverse Cartier operator \cite{Xu}, and to simultaneously extend it to a more general setting where the connection $\nabla$ is allowed to have \emph{logarithmic poles} with \emph{nilpotent} residues.

\subsection{Frobenius pullbacks for $W$-families of flat connections}

In this subsection of expository nature we recall the classical construction of Frobenius pullbacks for relative flat connections $(E,\nabla)$ defined on a smooth scheme $X/W$. 
Henceforth we assume $p\geq 3$, we do not impose any nilpotence conditions on the $p$-curvature of the special fibre $(E,\nabla)_k$, as is usually done in the literature. For simplicity we assume that $E$ is coherent and locally free.
 The Frobenius pullback exists on this level since we work over the non-ramified base ring $W$. \footnote{See \cite[Chapter 8]{LN}. We thank A. J. de Jong for this crucial remark and M. Kisin for the reference to \cite{Ber00}.}

Every $x \in X_k$ possesses an affine open neighbourhood $U\subset X$, equipped with a locally defined formal Frobenius lift 
$$\widehat{F} \colon \widehat{U} \to \widehat{U},$$
where $\widehat U$ is an affine flat formal lift of $U$ and 
$\tilde{F}|_{U_k}\colon U_k \to U_k$ 
is equal to the usual absolute Frobenius morphism sending a section $f \in \Oc_{X_k}(U_k)$ to $f^p$.

Such a chosen local lift allows one to consider the pullback connection $\tilde{F}(E,\nabla)|_{\widehat{U}}$. By \cite[Proposition 2.2.5 a)]{Ber00}, there exists a natural descent datum provided by the Taylor formula for such a collection of locally defined Frobenius pullbacks which we discuss now. 

In order to define the descent datum, we must assume that $U$ is equipped with \'etale coordinates, that is, an \'etale morphism $U \to \Ab^d_{W}$. This can always be achieved on a sufficiently small open subset \cite[tag 054L]{StPr}. We then denote by $(x_1,\ldots, x_d)$ the coordinate functions taking values in $\Ab^1_W$. We denote by $\partial_i$ the derivation dual to $dx_i$. 
For the self-map $\widehat{F}\colon \widehat{U}\to \widehat{U}$ we write $\widehat{F}_j$ to denote the composition $x_j \circ \widehat{F}$. We use the notation $\widehat{\mathcal O}=\varprojlim_n \mathcal{O}_{U_n}$ for the sheaf of formal functions on $U$. 
\begin{definition}
Let $(\widehat{E},\widehat{\nabla})$ be a fixed formal flat connection on $\widehat{U}$ and $$\widehat{F}, \widehat{G}\colon \widehat{U} \to \widehat{U}$$ be local Frobenius lifts. We define an $\widehat{\Oc}$-linear map
$$\psi_{\widehat{G}\widehat{F}}\colon \widehat{G}^*E \to \widehat{F}^*E$$
by the formula (see \cite[Proposition 2.2.5 a)]{Ber00})
\begin{equation}\label{eqn:psi}\psi(s \otimes 1) = \sum_{\underline{i}\in \Nb^{d}} \partial^{\underline{i}}(s) \otimes \frac{\prod_{j=1}^d(\widehat{F}_{j}-\widehat{G}_{j})^{i_j}}{\underline{i}!},\end{equation}
where $s \in \Gamma(U,\widehat{G}^{-1}\widehat{E})$.
\end{definition}

In the above, the notation $\partial^{\underline{i}}$ is used as shorthand for $\partial_1^{i_1}\circ \cdots \circ \partial_d^{i_d}$. Similarly, $\underline{i}!$ refers to the product $\prod_{j=1}^d (i_j!)$.

Note that the infinite series above converges whenever $p > 2$, since the difference $(\widehat{F}_i - \widehat{G}_i)$ belongs to $p\widehat{\Oc}_U$. This follows from convergence of the $p$-adic exponential series on $pW \subset W$, see \cite[Chapter 12]{Cassels}. Here, it becomes crucial that we work over $W$, as the series would diverge in a ramified field extension.

The proof of the following two identities is classical and is left as an exercise to the reader.

\begin{lemma}\label{lemma:ab}
The map $\psi_{\widehat{G}\widehat{F}}$ satisfies the relations 
\begin{enumerate}
    \item[(a)] $\widehat{F}^*(\widehat{\nabla})\circ \psi_{\widehat{G}\widehat{F}} = \psi_{\widehat{G} \widehat{F}} \circ \widehat{G}^*(\widehat{\nabla})$;

    \item[(b)] $\psi_{\widehat{H}\widehat{G}}\circ \psi_{\widehat{G}\widehat{F}} = \psi_{\widehat{H}\widehat{F}}$, where $\widehat{H}$ is another local Frobenius lift.
\end{enumerate}
\end{lemma}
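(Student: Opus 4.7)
The plan is to verify both identities by direct manipulation of the defining Taylor series. Both are purely formal consequences of the definition of $\psi$, relying on just two structural inputs: the flatness of $\widehat{\nabla}$, which ensures that the iterated operators $\partial^{\underline{i}}$ are symmetric in the multi-index and that the individual $\nabla_{\partial_l}$ mutually commute; and the Leibniz rule for the pullback connections $\widehat{F}^*\widehat{\nabla}$ and $\widehat{G}^*\widehat{\nabla}$. No further analytic content is needed beyond the $p$-adic convergence already guaranteed by $p\ge 3$.

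The cleaner of the two is the cocycle identity (b), which I would tackle first. Substituting the defining series into $\psi_{\widehat{H}\widehat{G}}\circ\psi_{\widehat{G}\widehat{F}}$ yields a double sum indexed by $(\underline{i},\underline{j})\in\Nb^d\times\Nb^d$, with general term $\partial^{\underline{j}}\partial^{\underline{i}}(s)$ tensored against a scalar built from $\widehat{H}-\widehat{G}$ and $\widehat{G}-\widehat{F}$. Using flatness to identify $\partial^{\underline{j}}\partial^{\underline{i}}(s)$ with $\partial^{\underline{k}}(s)$ for $\underline{k}=\underline{i}+\underline{j}$, one collects terms by total multi-index. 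The scalar coefficient of $\partial^{\underline{k}}(s)\otimes 1$ then becomes the multinomial expansion
\[
\frac{1}{\underline{k}!}\prod_{l=1}^d\bigl((\widehat{H}_l-\widehat{G}_l)+(\widehat{G}_l-\widehat{F}_l)\bigr)^{k_l} \;=\; \frac{1}{\underline{k}!}\prod_{l=1}^d(\widehat{H}_l-\widehat{F}_l)^{k_l},
\]
which is exactly the coefficient appearing in $\psi_{\widehat{H}\widehat{F}}$.

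For the horizontality identity (a), the strategy is to apply $(\widehat{F}^*\widehat{\nabla})_{\partial_i}$ termwise to $\psi(s\otimes 1)$ and compare with $\psi\circ(\widehat{G}^*\widehat{\nabla})_{\partial_i}$ applied to $s\otimes 1$. The Leibniz rule splits each summand of the left-hand side into two pieces: one in which the pullback connection acts on the section factor $\partial^{\underline{n}}(s)$, producing a term with scalar $\partial_i(\widehat{F}_k)$; and one in which the derivation acts on the scalar factor $\prod_j(\widehat{F}_j-\widehat{G}_j)^{n_j}/n_j!$, producing, after reindexing $\underline{m}=\underline{n}-e_k$ and another use of flatness, a term with scalar $\partial_i(\widehat{F}_k-\widehat{G}_k)$. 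Summing these, the combined scalar multiplying $\partial_k\partial^{\underline{m}}(s)\otimes\tau^{\underline{m}}/\underline{m}!$ reduces (with the appropriate sign convention on $\tau$) to $\partial_i(\widehat{G}_k)$, which matches the coefficient produced on the right-hand side once the $\widehat{\Oc}$-linearity of $\psi$ is used to pull the scalars out of each summand.

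The main obstacle is purely notational: careful bookkeeping of multi-indices and of the sign conventions governing the direction $\widehat{G}^*\widehat{E}\leftrightarrows\widehat{F}^*\widehat{E}$ is essential, since the cancellation in (a) is quite sensitive to these conventions. The one analytical point—that termwise differentiation respects $p$-adic convergence—is immediate, because the $\underline{i}$-th term of $\psi(s\otimes 1)$ lies in $p^{|\underline{i}|}$ times a bounded piece of the target, so any finite-order differential operator preserves the convergence of the series.
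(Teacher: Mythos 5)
Your verification is correct and is exactly the classical computation that the paper leaves as an exercise (with reference to Berthelot): the multinomial additivity $\frac{1}{\underline{k}!}\prod_l\bigl((\widehat{H}_l-\widehat{G}_l)+(\widehat{G}_l-\widehat{F}_l)\bigr)^{k_l}$ for (b), and the Leibniz-rule cancellation combining $\partial_i(\widehat{F}_k)$ with the derivative of the scalar factor for (a), both resting only on integrability (commutation of the $\nabla_{\partial_l}$) and $p$-adic convergence. Your caveat about the sign convention is well taken and in fact detects a genuine convention slip: with the map oriented $\widehat{G}^*E\to\widehat{F}^*E$ as stated, the cancellation in (a) forces the coefficient $(\widehat{G}_j-\widehat{F}_j)^{i_j}$ rather than $(\widehat{F}_j-\widehat{G}_j)^{i_j}$ (equivalently, the printed formula defines the map in the opposite direction), precisely as your bookkeeping indicates.
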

    
Identity (a) and (b) above imply that the maps $\psi_{\widehat{G}\widehat{F}}$ are a collection of isomorphisms $\widehat{F}^*(\widehat{E},\widehat{\nabla}) \simeq \tilde{G}^*(\widehat{E},\widehat{\nabla})$ satisfying the cocycle identity. By covering $X$ with sufficiently small open subsets $U$ endowed with \'etale coordinates and local Frobenius lift, we can glue the locally defined pullbacks. 
The resulting flat connection relative to $W$ will be denoted by $F^*(\widehat E, \widehat \nabla)$, despite the fact that there rarely is a globally defined Frobenius lift. This construction is of functorial nature, and the resulting functor is denoted by
$$F^*\colon \Conn(\widehat{X}/W) \to \Conn(\widehat{X}/W).$$
{ Here, $\Conn(\widehat{X}/W) $ denotes the category of vector bundles on $X$ with an integrable connection relative to $W$.

It will be crucial for us that there exists a logarithmic generalisation of this construction.

Let us consider a strict relative  normal crossings divisor $D \subset X$.
 Let us denote by $\Conn(X/W,D/W)$ the category of pairs $(E,\nabla)$ where $E$ is a vector bundle  on $X$ and $\nabla\colon E \to E \otimes \Omega_{X/W}^1(\log D)$ is an integrable 
 logarithmic connection relative to $W$.  Similary, let us denote by $\Conn(\widehat X/W, \widehat D/W)$ the category of pairs $(\widehat E, \widehat\nabla)$ where $\widehat E$ is a formal vector bundle on $\widehat X$ and $\widehat \nabla\colon \widehat E \to \widehat E \otimes \widehat \Omega_{\widehat X/W}^1(\log \widehat D)$ is a formal integrable logarithmic connection relative to $W$.

\begin{proposition}
The Frobenius-pullback functor $F^*$ on $\Conn(\widehat{X}/W)$ can be extended to an exact functor 
$F^*\colon \Conn(\widehat{X}/W,\widehat{D}/W) \to \Conn(\widehat{X}/W,\widehat{D}/W).$
\end{proposition}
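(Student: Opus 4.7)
The plan is to mimic the non-logarithmic construction of the preceding subsection, with two key modifications suggested by the theory of log crystals: (i) replace ordinary derivations $\partial_j$ by logarithmic derivations $\theta_j = x_j\partial_j$ whenever $x_j$ cuts out a component of $D$; and (ii) replace the additive differences $\widehat F_j - \widehat G_j$ by multiplicative ones $\log(\widehat F_j / \widehat G_j)$, which still lie in $p\widehat{\Oc}$ provided both Frobenius lifts preserve the log structure.

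In detail, I would first pass to a sufficiently small affine open $U \subset X$ admitting \'etale coordinates $x_1,\ldots,x_d\colon U \to \Ab_W^d$ with $D \cap U = \{x_1\cdots x_r = 0\}$. A \emph{log Frobenius lift} is a formal Frobenius lift $\widehat F$ on $\widehat U$ satisfying $\widehat F^* x_j = x_j^p u_j$ with $u_j \equiv 1 \pmod p$ for $j \le r$, and $\widehat F^* x_j = x_j^p + p g_j$ for $j > r$; existence of such a lift is a standard deformation-theoretic fact (one may even take $u_j = 1$ for $j\le r$). The pullback $\widehat F^*(\widehat E, \widehat \nabla)$ is then naturally a log connection on $\widehat U$, since $\widehat F^*(d\log x_j) = p\, d\log x_j + d\log u_j \in \widehat \Omega^1_{\widehat U/W}(\log \widehat D)$.

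For two such lifts $\widehat F$ and $\widehat G$, I would define the descent isomorphism $\psi_{\widehat G\widehat F}\colon \widehat G^*\widehat E \to \widehat F^*\widehat E$ by the logarithmic Taylor formula
\begin{equation*}
\psi_{\widehat G\widehat F}(s\otimes 1) = \sum_{\underline i \in \Nb^d} \frac{\theta^{\underline i}(s)}{\underline i!}\otimes \prod_{j\le r}\bigl(\log(\widehat F_j/\widehat G_j)\bigr)^{i_j}\prod_{j>r}(\widehat F_j - \widehat G_j)^{i_j},
\end{equation*}
where $\theta_j = x_j\partial_j$ for $j \le r$ and $\theta_j = \partial_j$ for $j > r$. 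For $p \ge 3$ convergence is verified termwise: for $j \le r$ one has $\widehat F_j/\widehat G_j = u_j/v_j \in 1 + p\widehat{\Oc}$, so the $p$-adic logarithm converges with values in $p\widehat{\Oc}$; for $j > r$ the additive difference lies in $p\widehat{\Oc}$ as in the classical case; the divided-power denominators $\underline i!$ are then absorbed exactly as before.

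The main obstacle is to establish the logarithmic analogues of Lemma \ref{lemma:ab}(a)--(b), namely the compatibility of $\psi_{\widehat G\widehat F}$ with the log connection and the cocycle identity $\psi_{\widehat H\widehat G}\circ \psi_{\widehat G\widehat F} = \psi_{\widehat H\widehat F}$. These reduce to formal power-series manipulations: the cocycle identity rests on the multiplicative identity $\log(\widehat H_j/\widehat F_j) = \log(\widehat H_j/\widehat G_j) + \log(\widehat G_j/\widehat F_j)$, which is the exact analogue of the additive telescoping in the smooth case, combined with the Leibniz rule for the divided powers $\theta^{\underline i}/\underline i!$ of logarithmic derivations. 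Once these identities are verified, covering $X$ by logarithmic coordinate charts and gluing yields a global functor $F^* \colon \Conn(\widehat X/W, \widehat D/W) \to \Conn(\widehat X/W, \widehat D/W)$, and exactness is automatic since locally $\widehat F$ is finite flat of degree $p^d$ (the ring $\widehat \Oc_{\widehat U}$ is a free module over itself via $\widehat F$, so pullback of short exact sequences of locally free sheaves remains exact).
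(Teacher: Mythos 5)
Your proposal is correct in outline, but it takes a genuinely different route from the paper. You adopt the standard log-crystalline formalism: you restrict to \emph{multiplicative} Frobenius lifts $\widehat{F}^*x_j=x_j^pu_j$ with $u_j\in 1+p\widehat{\Oc}$, replace $\partial_j$ by the logarithmic derivations $\theta_j=x_j\partial_j$ (which preserve $\widehat{E}$ because $\widehat{\nabla}$ has only log poles), and glue via the multiplicative Taylor formula in the variables $\log(\widehat{F}_j/\widehat{G}_j)\in p\widehat{\Oc}$. The paper instead keeps the additive formula \eqref{eqn:psi} verbatim, with ordinary derivations and with any lifts merely satisfying $\widehat{F}^*\Oc(-\widehat{D})\subset\Oc(-\widehat{D})$: the key observation there is a pole--zero cancellation, namely $\partial^{\underline{i}}(s)$ is a section of $E(\sum_j i_jD_j)$ while $\prod_j(\widehat{F}_j-\widehat{G}_j)^{i_j}/\underline{i}!$ lies in $\Oc(-\sum_j i_jD_j)$ since both lifts preserve the ideal of $D$, so each term is regular and converges as in the case $D=\emptyset$; the analogues of Lemma \ref{lemma:ab} then come for free by restricting to the complement of $D$, where they are the classical identities, using that a log connection on a vector bundle is determined on the complement of the pole divisor. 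What your route buys: every term of your formula is manifestly regular (no poles ever appear), and the multiplicative choice of lift makes it transparent that the local pullback is again a log connection, via $\widehat{F}^*(d\log x_j)=p\,d\log x_j+d\log u_j$. What it costs: the horizontality and cocycle identities must be established for a new formula, and you only sketch them; you could shortcut this, as the paper does, by checking them on $X\setminus D$ and invoking torsion-freeness, but that still requires verifying that your multiplicative expansion agrees with the additive one away from $D$. You should also record that the condition $\widehat{F}^*x_j\in x_j^p(1+p\widehat{\Oc})$ is independent of the chosen local equation of $D_j$ (if $x_j'=ex_j$ then $\widehat{F}^*(e)e^{-p}\in 1+p\widehat{\Oc}$), which is what allows lifts attached to different coordinate charts to be compared on overlaps and hence glued. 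With these points spelled out your argument is complete; the exactness claim via flatness of the finite Frobenius lift is fine.
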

\begin{proof}
Let $(\widehat E,\widehat \nabla)$ be an object in $\Conn(\widehat{X}/W,\widehat{D}/W)$. The Frobenius pullback is already defined in the complement of $D$. For a sufficiently small open subset $\widehat{U} \subset \widehat{X}$ intersecting the normal crossings divisor $\widehat{D}$, we choose a formal Frobenius lift $\widehat {F}$, which preserves the divisor, that is, with the property
\ga{}{ \widehat{ F}^* \mathcal{O}_{\widehat{X}}(-\widehat{D})
\subset \mathcal{O}_{\widehat{X}}(-\widehat{D}).\notag}

It suffices to show that the formula \eqref{eqn:psi} defining the glueing data $\psi_{\widehat{G}\widehat{F}}$ can still be applied in the logarithmic setting. By passing to an appropriate \'etale neighbourhood endowed with a system of local coordinates $(x_i)$ we may assume that $U \cap D$ is defined by the local equation $x_1\cdots x_s = 0$ where $x_i$ defines the irreducible component $U\cap D_i$.

The term $\partial^{\underline{i}}(s)$ is a section of $E(\sum_{j=1}^s i_j D_j)$. However, the right-hand term
$$\frac{\prod_{j=1}^d(\widehat{F}_{j}-\widehat{G}_{j})^{i_j}}{\underline{i}!},$$
is a section of $\Oc(-\sum_{j=1}^s i_j D_j)$, since $\widehat{F}$ and $\widehat{G}$ respect the divisor $D$, and thus the tensor product
 $$\partial^{\underline{i}}(s) \otimes \frac{\prod_{j=1}^d(\widehat{F}_{j}-\widehat{G}_{j})^{i_j}}{\underline{i}!}$$
belongs to $E$. The convergence condition holds as in the case $D=\emptyset$.  Therefore  $\psi_{\widehat{G}\widehat{F}}$ is still well-defined.

Lemma \ref{lemma:ab} still holds, since it suffices to verify these identities in the open dense subset given by the complement of $D$. Indeed  on a vector bundle a connection with logarithmic poles, is uniquely determined by its value on the complement of the pole divisor.
\end{proof}

\begin{lemma}\label{lemma:etale-basechange}
The functor $F^*$ is compatible with base change along \'etale morphisms. That is, for every \'etale morphism $h\colon \widehat X_1 \to \widehat X_2$ there is the following  diagram of functors 
\[
\xymatrix{
\Conn(\widehat X_1,\widehat D) \ar[d]_{h^*} \ar[r]^{F^*} & \Conn(\widehat X_1,\widehat D) \ar[d]^{h^*} \\
\Conn(\widehat X_2,h^*\widehat D) \ar[r]^{F^*} & \Conn(\widehat X_2,h^*\widehat D)
}
\]
commuting up to an invertible natural transformation.
\end{lemma}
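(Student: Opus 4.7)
The plan is to argue locally using the étale lifting property of Frobenius and then globalise via the Taylor-formula cocycle $\psi_{\widehat G\widehat F}$ of formula \eqref{eqn:psi}. Recall that $F^*$ is defined by covering the formal scheme by affine opens equipped with étale coordinates and formal Frobenius lifts, pulling back naively on each open, and gluing via $\psi_{\widehat G\widehat F}$; the proposed natural isomorphism will be assembled from compatible local choices on source and target.

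The crucial input is the étale lifting property of Frobenius. After refining the covers, I would assume that an affine open $\widehat V$ of the source formal scheme is equipped with étale coordinates that are pulled back via $h$ from étale coordinates on an open $\widehat U$ of the target, with $h$ restricting to $\widehat V \to \widehat U$. Given a formal Frobenius lift $\widehat F$ on $\widehat U$, I claim there is a unique formal Frobenius lift $\widehat F'$ on $\widehat V$ satisfying the commutation relation $h\circ \widehat F' = \widehat F \circ h$. Indeed, modulo $p$ this relation reduces to the naturality identity $h_0 \circ F_{V,k} = F_{U,k} \circ h_0$ for the absolute Frobenius, which holds automatically, and étaleness of $h$ then provides a unique lift of $F_{V,k}$ to the desired $\widehat F'$.

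Given such a compatible pair $(\widehat F, \widehat F')$, the commutative square provides a canonical $\Oc$-linear identification $(\widehat F')^* h^*(\widehat E, \widehat \nabla) \cong h^* \widehat F^*(\widehat E, \widehat \nabla)$ respecting the connections. To see that these local isomorphisms glue to a globally defined natural transformation, I would verify that for a second compatible pair $(\widehat G, \widehat G')$ the pulled-back Taylor cocycle satisfies $h^*\psi_{\widehat G\widehat F} = \psi_{\widehat G' \widehat F'}$: this reduces to a term-by-term comparison in \eqref{eqn:psi}, using that the derivations $\partial^{\underline i}$ on $\widehat V$ are $h$-related to those on $\widehat U$ by construction of the étale coordinates, and that the differences $\widehat F'_j - \widehat G'_j$ are $h$-pullbacks of $\widehat F_j - \widehat G_j$ by compatibility of the Frobenius lifts. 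The cocycle identity Lemma \ref{lemma:ab}(b) then guarantees that the family of local isomorphisms is independent of all choices and assembles into a globally defined invertible natural transformation.

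The hard part will be the combinatorial set-up: simultaneously refining the covers so that the source étale coordinates are pullbacks of the target étale coordinates, and maintaining compatibility of Frobenius lifts on overlaps (so that the cocycles defined from $\psi_{\widehat G\widehat F}$ actually match under $h^*$). Once this book-keeping is in place, the verification amounts to a mechanical matching of the Taylor coefficients, and the compatibility with logarithmic poles is automatic since (as in the previous proposition) the Taylor-formula terms in the logarithmic setting are already well-defined sections of $\widehat E$.
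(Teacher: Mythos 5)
Your proposal is correct and follows essentially the same route as the paper, whose entire proof is the observation that the absolute Frobenius commutes with \'etale morphisms, so a local Frobenius lift on the target pulls back (uniquely, by formal \'etaleness) to a compatible local Frobenius lift on the source. Your additional verification that the Taylor cocycles $\psi_{\widehat G\widehat F}$ match under $h^*$ for such compatible choices is exactly the book-keeping the paper leaves implicit, and it is carried out correctly.
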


\begin{proof}
This follows from the observation that the absolute Frobenius commutes with \'etale pullbacks. Therefore, the pullback of a local Frobenius lift is a local Frobenius lift.   
\end{proof}

In order to define strict normal crossing divisors on  a Deligne-Mumford stack, we follow the convention of \cite[D\'efinition 1.3.17(2)]{Laaroussi}, i.e., each irreducible component is required to be regular. We denote by $\widehat{\Xc}, \widehat{\Dc}$ the formal completions.

\begin{corollary}\label{cor:Cartier-DM}
 Let $\Xc/W$ be a smooth Deligne-Mumford stack over $W$ with a strict normal crossings divisor $\Dc \hookrightarrow \Xc$. Then, there is an exact functor (unique up to a unique natural transformation) 
 $$F^*\colon \Conn( \widehat \Xc, \widehat \Dc) \to \Conn( \widehat \Xc, \widehat \Dc),$$
 such that for every formal scheme $\widehat S$ and every \'etale morphism $h\colon \widehat S \to \Xc$ we have a natural transformation $h^*F^*_{\Xc} 
 \simeq F^{*}_{\widehat{S}} h^*$. 
\end{corollary}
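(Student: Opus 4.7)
The plan is to deduce the corollary from étale descent applied to the scheme-theoretic version of the Frobenius pullback functor, using Lemma \ref{lemma:etale-basechange} to transport descent data.

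First I would choose an étale atlas $p\colon U \to \Xc$ where $U$ is a scheme; this exists since $\Xc$ is Deligne–Mumford. Because $p$ is étale and $\Dc$ is a strict normal crossings divisor on $\Xc$, the pullback $p^*\Dc$ is a strict normal crossings divisor on $U$ in the sense of \cite[D\'efinition 1.3.17(2)]{Laaroussi}. Passing to formal completions and applying the scheme-level construction, I obtain the functor $F^*_{\widehat U}$ on $\Conn(\widehat U, p^*\widehat \Dc)$. The two projections $p_1,p_2\colon \widehat U \times_{\widehat \Xc} \widehat U \to \widehat U$ are étale, so Lemma \ref{lemma:etale-basechange} supplies invertible natural transformations $\alpha_i\colon p_i^* F^*_{\widehat U} \isoto F^*_{\widehat U \times_{\widehat \Xc} \widehat U} p_i^*$ for $i = 1,2$.

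Next I would invoke étale descent for quasi-coherent sheaves with integrable logarithmic connection: the category $\Conn(\widehat \Xc, \widehat \Dc)$ is equivalent to the category of pairs $\bigl((\widehat E_U,\widehat \nabla_U), \varphi\bigr)$ where $(\widehat E_U, \widehat \nabla_U)$ is an object of $\Conn(\widehat U, p^*\widehat \Dc)$ and $\varphi\colon p_1^*(\widehat E_U,\widehat\nabla_U) \isoto p_2^*(\widehat E_U,\widehat\nabla_U)$ is a descent isomorphism satisfying the cocycle condition on the triple fibre product. Given $(\widehat E,\widehat \nabla) \in \Conn(\widehat \Xc, \widehat \Dc)$ with associated descent isomorphism $\varphi$, I define $F^*(\widehat E, \widehat \nabla)$ by transporting the pair $F^*_{\widehat U}(\widehat E_U, \widehat \nabla_U)$ along the descent isomorphism
\[
\alpha_2^{-1} \circ F^*_{\widehat U \times_{\widehat \Xc} \widehat U}(\varphi) \circ \alpha_1
\colon p_1^* F^*_{\widehat U}(\widehat E_U,\widehat\nabla_U) \isoto p_2^* F^*_{\widehat U}(\widehat E_U,\widehat\nabla_U).
\]
The cocycle condition on this new descent datum reduces, via two applications of Lemma \ref{lemma:etale-basechange} on the triple product and the cocycle condition already satisfied by $\varphi$, to a compatibility of natural transformations. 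I would verify this by checking on a common refinement; since the natural transformations of Lemma \ref{lemma:etale-basechange} are determined by functoriality of the local Frobenius-lift construction, the compatibility is automatic.

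For uniqueness of $F^*_{\Xc}$ up to unique natural transformation, I would observe that any two functors on $\Conn(\widehat \Xc, \widehat \Dc)$ with the asserted étale compatibility become canonically identified after pullback along the atlas $p$, and that this identification is compatible with the descent data by construction; étale descent then uniquely promotes it to an identification on $\Xc$. Exactness of $F^*_{\Xc}$ is inherited from exactness on the étale atlas, since étale pullback is exact and conservative and reflects exactness. The step I expect to demand the most care is the verification of the cocycle condition for the transported descent datum on the triple fibre product $\widehat U \times_{\widehat \Xc} \widehat U \times_{\widehat \Xc} \widehat U$, where one must check that the three natural transformations produced by Lemma \ref{lemma:etale-basechange} along the three projections fit together coherently—but this ultimately reduces to the functoriality statement built into the construction of $\psi_{\widehat G \widehat F}$.
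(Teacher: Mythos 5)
Your proposal is correct and follows exactly the route the paper intends: the corollary is stated as an immediate consequence of Lemma \ref{lemma:etale-basechange}, with $F^*$ defined on an étale atlas and glued via the base-change natural transformations, which is precisely your descent argument. Your write-up merely makes explicit the cocycle verification that the paper leaves implicit.
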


It will also be important for us to know that the Frobenius pullback preserves nilpotent residues.

\begin{lemma}\label{lemma:F_residues}
The Frobenius pullback $F^*$ sends a flat connection $(E,\nabla)$ with nilpotent residues $\res_i \nabla$ to a flat connection with nilpotent residues.
\end{lemma}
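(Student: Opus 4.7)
The proof plan reduces to a computation in good local coordinates, facilitated by the freedom in choosing the local Frobenius lift. Since nilpotence of $\res_j F^*\nabla$ is a local condition on $\widehat{D}_j$, I would pass to an affine formal open $\widehat{U} \subset \widehat{X}$ equipped with \'etale coordinates $x_1, \ldots, x_d$ such that $\widehat{D} \cap \widehat{U}$ is cut out by $x_1 \cdots x_s = 0$ and each $\widehat{D}_j \cap \widehat{U}$ by $x_j = 0$. By Lemma \ref{lemma:ab}, the Frobenius pullback is independent (up to canonical isomorphism) of the chosen local Frobenius lift; and \'etaleness of $\widehat{U} \to \widehat{\Ab}{}_W^d$ allows me to pick the especially convenient lift $\widehat{F}$ determined by $\widehat{F}^*(x_j) = x_j^p$ for all $j$, whose existence is immediate from the \'etale lifting property applied to the standard Frobenius lift on $\widehat{\Ab}{}_W^d$.

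With this choice, $\widehat{F}^*(dx_j/x_j) = d(x_j^p)/x_j^p = p \cdot dx_j/x_j$ for $j \leq s$. Writing $\nabla = d + \sum_{j=1}^s A_j \, dx_j/x_j + (\text{regular part})$ for local endomorphisms $A_j$ of $E|_{\widehat{U}}$, the pullback connection takes the shape
$$\widehat{F}^*\nabla = d + \sum_{j=1}^s p\cdot \widehat{F}^*(A_j) \, \frac{dx_j}{x_j} + (\text{regular part}).$$
Because $\widehat{F}^*(x_j) = x_j^p$ vanishes on $\{x_j = 0\}$, the map $\widehat{F}$ carries $\widehat{D}_j$ into itself, and restricting the coefficient of $dx_j/x_j$ to $\{x_j = 0\}$ yields
$$\res_j(\widehat{F}^*\nabla) = p \cdot (\widehat{F}|_{\widehat{D}_j})^*(\res_j \nabla).$$
Since both pullback by a morphism and multiplication by $p$ preserve nilpotence of an endomorphism (the nilpotency index is non-increasing under either operation), the hypothesis that $\res_j \nabla$ is nilpotent forces $\res_j(\widehat{F}^*\nabla)$ to be nilpotent, and this local conclusion glues to give the claim for the globally defined $F^*(E,\nabla)$.

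I do not foresee a genuine obstacle. The only two mild subtleties are the existence of a Frobenius lift satisfying $\widehat{F}^*(x_j) = x_j^p$, which is guaranteed by the \'etale coordinate hypothesis, and the fact that such a non-canonical local choice is permissible when computing the globally defined $F^*(E,\nabla)$, which is precisely the content of the descent data constructed via the Taylor isomorphisms $\psi_{\widehat{G}\widehat{F}}$ of Lemma \ref{lemma:ab}.
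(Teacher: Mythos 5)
Your argument is correct and is essentially the paper's proof: the paper likewise reduces to the local description and observes that pulling back along a local Frobenius lift preserves nilpotency of the residues, with your choice $\widehat{F}^*(x_j)=x_j^p$ and the identity $\res_j(\widehat{F}^*\nabla)=p\cdot(\widehat{F}|_{\widehat{D}_j})^*(\res_j\nabla)$ simply making that local computation explicit. The appeal to the horizontal gluing isomorphisms $\psi_{\widehat{G}\widehat{F}}$ to justify the freedom in choosing the lift is exactly the intended justification, so no gap remains.
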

\begin{proof}
This follows from the local description, as pulling back along a local Frobenius lift preserves nilpotency of the residues.   
\end{proof}

\subsection{Frobenius pullback and the inverse Cartier transform}

The functor $F^*$ described in the previous section is not an equivalence of categories. However, it is not far off from being one as we  explain below. First we must recall the notion of a $p$-connection.
\begin{definition} \label{def:pconn}
A (flat) $p$-connection (with logarithmic poles along $\Dc$)  $\nabla'$ on a  formal  vector bundle $\widehat V$ on $\widehat{\Xc}$ is a  formal  $W$-linear map  
$\widehat V\to \widehat V\otimes_{\mathcal O_{\widehat \Xc}} \Omega^1_{\mathcal O_{\widehat \Xc}}$ 
($\widehat V\to \widehat V\otimes_{\mathcal O_{\widehat \Xc}} \Omega^1_{\mathcal O_{\widehat \Xc}} (\log \widehat \Dc)$) 
satisfying the $p$-Leibniz rule $\nabla'(fs) = p s \otimes df + f\nabla'(s)$, where $f$ is a regular  formal function and $s$ a local section of $\widehat V$, and satisfying the integrality condition  $(\nabla')^2(s)=0$.  We denote by  $ \pConn(\widehat \Xc,\widehat \Dc)$ the category of $p$-connections with logarithmic poles along $\widehat \Dc$. 
\end{definition}

 We shall show the existence of a factorisation $$F^* = C^{-1}\circ [p].$$ 
 Here  $C^{-1}$ is the so-called \emph{inverse Cartier operator} which is an equivalence of categories and 
$$[p]\colon \Conn( \widehat \Xc,\widehat \Dc) \to \pConn(\widehat \Xc,\widehat \Dc)$$
sends a flat log-connection $(\widehat E,\ \widehat \nabla)$ to the $p$-connection $(\widehat E,p \widehat \nabla)$.

Without log-poles this is a known result, and follows essentially from Xu's lift of the inverse Cartier operator to $W,$ see \cite{Xu}.

Since it would take us too far afield, we will not provide a complete generalisation of Xu's work to the log-case. The construction presented in this article is tailor-made for the proof presented in Section~\ref{sec:topproof}.

\begin{definition}\label{defi:Cartier}
\begin{enumerate}
    \item[(a)] We denote by $\Conn(\widehat{\Xc},\widehat{\Dc})_0$ the full subcategory of $\Conn(\widehat{\Xc},\widehat{\Dc})$ consisting of the 
    $(\widehat E, \widehat \nabla)\  $ for which 
     the special fibre   $(\widehat E, \widehat \nabla)\otimes_W k  $   has zero $p$-curvature.  
    \item[(b)] We denote by $\pConn(\widehat{\Xc},\widehat{\Dc})_0$ the full subcategory of $(\widehat V,\nabla')$ where the special fibre $(\widehat V, \nabla')\otimes_W k $ is the zero Higgs field.
    \item[(c)] There is a functor $[\frac{1}{p}]\colon \pConn(\widehat{\Xc},\widehat{\Dc})_0 \to \Conn(\widehat{\Xc},\widehat{\Dc})$ sending $(\widehat V,\nabla')$ to $(\widehat V,\frac{\nabla'}{p})$.
    \item[(d)]\label{cartier} The functor $C^{-1}\colon \pConn(\widehat{\Xc},\widehat{\Dc})_0 \to \Conn(\widehat{\Xc},\widehat{\Dc})_0$
 is defined to be the composition $$C^{-1}=F^* \circ [\frac{1}{p}].$$\end{enumerate}
\end{definition}

In the absence of log-poles, the following proposition is (a special case of) the main result of \cite{Xu}. We highlight that this result should be expected to hold without imposing the restriction of vanishing $p$-curvature.  It is  however  the only case required for our purposes and  leads to a drastic simplification of the argument.

\begin{proposition}\label{prop:log-Xu}
The functor $C^{-1}=F^*\circ [\frac{1}{p}]\colon \pConn(\widehat{\Xc},\widehat{\Dc})_{0} \to \Conn(\widehat{\Xc},\widehat{\Dc})_{0}$ is fully faithful. Its essential image is given by the full subcategory of flat log-connections $(E,\nabla)$ with $p$-divisible residues.
\end{proposition}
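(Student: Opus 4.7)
The plan is to reduce to an étale-local computation and then combine Xu's non-logarithmic theorem on the open complement $\widehat{\Xc}^\circ = \widehat{\Xc}\setminus\widehat{\Dc}$ with an extension argument across $\widehat{\Dc}$. Using the étale compatibility recorded in Corollary \ref{cor:Cartier-DM}, I may assume I am working in formal étale local coordinates $(x_1,\dots,x_d)$ with $\widehat{\Dc}=\{x_1\cdots x_s=0\}$, and take the explicit Frobenius lift $\widehat F$ with $\widehat F(x_i)=x_i^p$, which preserves $\widehat{\Dc}$.

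For the inclusion of the essential image into the subcategory of connections with $p$-divisible residues, I rely on the direct computations $\widehat F^*(dx_i/x_i)=p\,(dx_i/x_i)$ for $i\le s$ and $\widehat F^*(dx_j)=p\,x_j^{p-1}\,dx_j$ for $j>s$. If $(V,\nabla')\in\pConn(\widehat{\Xc},\widehat{\Dc})_0$, then the vanishing of the Higgs field on the special fibre forces $\nabla'$ to take values in $p\,V\otimes\Omega^1(\log\widehat{\Dc})$, so $\nabla'/p$ is a genuine log flat connection. Pulling it back by $\widehat F$ multiplies every log 1-form appearing in the connection matrix by $p$, and hence the residues of $C^{-1}(V,\nabla')$ are $p$ times the Frobenius pullbacks of the residues of $\nabla'/p$, confirming $p$-divisibility.

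For fully faithfulness and essential surjectivity onto the $p$-divisible residue subcategory, I first restrict to $\widehat{\Xc}^\circ$, where Xu's theorem gives an equivalence. Given $(E,\nabla)$ with $p$-divisible residues, Xu applied to the restriction of $(E,\nabla)$ to $\widehat{\Xc}^\circ$ produces a $p$-connection $(V,\nabla')$ on $\widehat{\Xc}^\circ$; the $p$-divisibility together with the explicit Frobenius formulas shows that $(V,\nabla')$ extends uniquely to a logarithmic $p$-connection on $\widehat{\Xc}$ still lying in $\pConn_0$. For fully faithfulness, both $\Hom$ sets inject, by torsion-freeness of the underlying vector bundles, into the corresponding $\Hom$ sets over $\widehat{\Xc}^\circ$; Xu identifies the two images on $\widehat{\Xc}^\circ$, and a matrix comparison in a local frame shows that a horizontal morphism on $\widehat{\Xc}^\circ$ whose $F^*$ extends globally must itself extend globally.

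The main obstacle is this extension across $\widehat{\Dc}$, both for objects and for morphisms. Here the $p$-divisibility of residues plays its precise role: it provides exactly the integrality needed so that multiplying the Xu-produced connection matrix by $p$ keeps the poles logarithmic (rather than essential), and it analogously controls the denominators that would otherwise appear when trying to invert $\widehat F^*$ on the matrix of a horizontal morphism in a local frame.
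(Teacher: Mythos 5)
Your reduction to the open complement hides the actual content of the proposition in the step you yourself flag as "the main obstacle", and that step is not proved. Applying Xu's absolute equivalence on $\widehat{\Xc}^{\circ}=\widehat{\Xc}\setminus\widehat{\Dc}$ gives you a pair $(V^{\circ},\nabla'^{\circ})$ only on the complement, and at that point there is no local frame of $V^{\circ}$ near $\widehat{\Dc}$ in which to "read off integrality of the connection matrix": the hard part is extending the underlying bundle $V^{\circ}$ (compatibly with the $p$-connection, and so that the extension still lies in $\pConn(\widehat{\Xc},\widehat{\Dc})_0$) at all, not checking that a matrix has log poles. This is exactly where the obstruction lives: Frobenius descent of a flat log-connection with vanishing $p$-curvature naturally produces an object on the $p$-th root stack (equivalently, a parabolic object with fractional weights) rather than on $(\widehat{\Xc},\widehat{\Dc})$ itself — compare the example $(\Oc(-\Dc_j),\nabla_j)$ in the paper, whose descent would require a $p$-th root of $\Oc(-\Dc_j)$. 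The $p$-divisibility of the residues is precisely the condition that kills this obstruction, but showing that it does requires an argument about the object near $\widehat{\Dc}$ (in the paper: the mod-$p$ criterion of Lemma \ref{lemma:mod-p}, which uses tameness of the root stack, together with Cartier descent on the special fibre), not merely a statement about pole orders. Note also that in the $p$-adic formal setting sections of $\Oc$ on $\widehat{\Xc}^{\circ}$ need not even be meromorphic along $\widehat{\Dc}$ (e.g.\ $\sum_n p^n x^{-n}$), so "bounded denominators" is not something one gets for free from regularity after applying $\widehat{F}^*$; your morphism-extension argument can be salvaged in local coordinates, but the object-extension claim is exactly the proposition and is currently asserted rather than proven.

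For contrast, the paper does not try to extend across the divisor at all: it interprets log connections as connections relative to $\Theta^m=[\Ab^1/\Gb_m]^m$ via Olsson's isomorphism, proves a relative Shiho-type equivalence over the $p$-th root stack $\widehat{X}^{(\Theta^m)}$ (Proposition \ref{prop:Shiho-log}, by descent along the atlas $\Ab^m\to\Theta^m$), and then identifies which objects on the root stack descend to $(\widehat{X},\widehat{D})$ — that descent criterion, checked on the special fibre using Cartier descent, is where $p$-divisibility of residues enters. Your easy direction (residues of a Frobenius pullback are $p$-divisible, via $\widehat{F}^*(dx_i/x_i)=p\,dx_i/x_i$) agrees with the paper. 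If you want to keep your local strategy, you would essentially have to reprove the descent from the root stack by hand in coordinates, i.e.\ construct the extension of $V^{\circ}$ across $\widehat{\Dc}$ from the local structure of $(E,\nabla)$ with $p$-divisible residues and zero $p$-curvature, and then glue using full faithfulness; as written, that construction is missing.
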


The proof will be given at the end of Subsection \ref{sub:log}.

\subsection{Relative Frobenius morphisms and twists}

{As preparation for the proof of Proposition \ref{prop:log-Xu}, we study the absolute Frobenius morphism
 $F_{\Theta_k}$  of the stack $\Theta_k=[\Ab^1_k/\Gb_{m,k}]$. We begin by recalling the definition of the absolute Frobenius morphism of a stack.}

\begin{definition}
Let $\Yc$ be an $\Fb_p$-stack, or more generally a presheaf on the category $\Sch_{\Fb_p}$. For every $S \in \Sch_{\Fb_p}$ we define 
$$F_{\Yc}\colon \Yc(S) \to \Yc(S)$$
to be the map sending $S \to \Yc$ to the composition $S \xrightarrow{F_S} S \to \Yc$. 
Note that the construction above defines a natural transformation since for every morphism $g\colon S \to T$ in $\Sch_{\Fb_p}$ there is a commutative diagram
\[
\xymatrix{
S \ar[r]^{F_S} \ar[d]_g & S \ar[d]^g \\
T \ar[r]^{F_T} & T.
}
\]
Therefore, it gives rise to an endomorphism of the stack $\Yc$ denoted by $F_{\Yc}$.
 \end{definition}

\begin{definition}
Given a morphism of $\Fb_p$-stacks $\Yc \to \Ss$ we denote by $\Yc^{(\Ss)} \to \Ss$ the base change $\Yc \times_{\Ss,F_{\Ss}} \Ss \to \Ss$, and refer to it as the relative Frobenius twist. The relative Frobenius morphism $Fr_{\Yc/\Ss}\colon \Yc \to \Yc^{(\Ss)}$ is a morphism of $\Ss$-stacks obtained from the universal property of the fibre product: 
\[
\xymatrix{
\ar@/^2.0pc/[rr]^{
F_{\Yc}}\Yc \ar[r]_{Fr_{\Yc/\Ss}} \ar[rd] & \Yc^{(\Ss)} \ar[r] \ar[d] & \Yc \ar[d] \\
& \Ss \ar[r]^{F_{\Ss}} & \Ss.
}
\] 
\end{definition}

\begin{notation}\label{notation:prime}
The Frobenius twist of $\Y\to \Spec k$ will be denoted by $\Yc'$.  
\end{notation} 
In the following, we denote by $\Theta$ the stack $[\Ab^1/\Gb_m]$ over $W$  and by $\Theta_k=[\Ab^1_k/\Gb_{m,k}]$ its mod $p$ reduction. 

{
The following definition of a root stack stems from Abramovich--Graber--Vistoli \cite[Appendix B.2]{AGV} and Cadman \cite{cadman}. 

\begin{definition}\label{defi:root-stack}
    Let $s\colon \bar{X} \to \Theta^m$ be the morphism induced by the irreducible components of the effective divisor $D \subset \bar{X}$. For $n \in \mathbb{N}$ we define $\bar{\mathcal{X}}_{\underline{D},n}= \bar{X} \times_{\Theta^m,[n]} \Theta^m$, where $[n]\colon \Theta \to \Theta$ denotes the map induced by the $n$-th power morphism.
\end{definition}

\begin{remark}
The stack $\Theta=[\Ab^1/\G_m]$ classifies {effective generalised Cartier divisors (we thank M. D'Addezio for sharing this terminology with us), i.e.,} pairs consisting of a line bundle and a section. That is, a morphism $S \to \Theta$ corresponds to a pair $(L,s)$, where $L$ is a line bundle on $S$ and $s \in \Gamma(S,L)$ is a section, see \cite[Example 5.13]{Ol}. Every effective Cartier divisor $D \subset S$ gives rise to the pair $(\Oc_S(D),1)$, where we denote by $1$ the constant regular function in $\Oc_S \subset \Oc_S(D)$.

As in \cite[Appendix B.2]{AGV}, we may therefore describe the root stack as adjoining an $n$-th root to the pair $(L,s)$. That is, an $S$-point of $\bar{\Xc}_{D,n}$ corresponds to a triple $(M,t,f,\phi)$, where $M$ is a line bundle on $S$, $t \in \Gamma(S,M)$, $f\colon S \to \bar{X}$ is a morphism of schemes and 
$\phi\colon M^n \xrightarrow{\simeq} f^*L$ is an isomorphism such that $\phi(t^n)=s$.
\end{remark}

\begin{lemma}\label{lemma:smooth}
The root stack $X_{\underline{D},n}$ is a tame stack in the sense of \cite[Definition 3.1]{AOV}.
Furthermore, if $D=\bigcup_{i=1}^m D_i\subset X$ is a strict normal crossings divisor, with irreducible components, which we assume to be smooth over $W$, then the root stack $\Xc_{\underline{D},n}$ is a smooth algebraic $W$-stack.    
\end{lemma}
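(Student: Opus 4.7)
My plan is to produce an étale-local quotient presentation of the root stack, after which both assertions follow by formal arguments. The principal technical input is this local presentation as $[\Spec B/\mu_n^s]$; everything else is a direct consequence.

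Both claims are étale-local on $\bar X$, so I would pass to an affine étale neighbourhood $\bar U = \Spec A$ trivialising each line bundle $\Oc(D_i)|_{\bar U}$. Under the SNC assumption (used only for the smoothness part), I can further arrange that $\bar U$ admits étale coordinates $(x_1,\dots, x_d)\colon \bar U \to \Ab^d_W$ and that, after reindexing, the components of $D$ meeting $\bar U$ are exactly $D_1,\dots, D_s$ with $D_i\cap \bar U = \{x_i=0\}$. Under the trivialisations, the morphism $s\colon \bar U \to \Theta^m$ is encoded by a tuple $(a_1,\dots, a_m)\in A^m$, and the moduli description of $\Theta$ and of $\bar{\Xc}_{\underline D,n}$ recalled before the lemma gives the presentation
$$\bar{\Xc}_{\underline D,n}\times_{\bar X}\bar U \;\simeq\; [\Spec B/\mu_n^m], \qquad B = A[t_1,\dots, t_m]/(t_i^n - a_i),$$
with $\mu_n^m$ scaling the $t_i$.

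Tameness follows at once: $\mu_n$ is a finite diagonalisable $W$-group scheme, hence linearly reductive over any base, and every geometric stabiliser of a point of $\bar{\Xc}_{\underline D,n}$ is a subgroup of some $\mu_n^m$, hence itself diagonalisable and linearly reductive. By \cite[Definition 3.1]{AOV} the root stack is tame.

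For smoothness under the SNC hypothesis, choose $\bar U$ as above so that $a_i = x_i$ for $i\le s$ and $a_j\in A^\times$ for $j>s$. For each $j>s$, $\Spec(A[t_j]/(t_j^n - a_j))\to \Spec A$ is an fppf $\mu_n$-torsor (as $a_j$ is a unit), so this factor of the fibre product contributes trivially and
$$\bar{\Xc}_{\underline D,n}\times_{\bar X}\bar U \;\simeq\; [\Spec B_0/\mu_n^s], \qquad B_0 = A[t_1,\dots, t_s]/(t_i^n - x_i)_{i\le s}.$$
Since $(x_1,\dots, x_d)$ are étale coordinates, $A$ is étale over $W[x_1,\dots, x_d]$, so $B_0$ is étale over the smooth polynomial algebra $W[t_1,\dots, t_s, x_{s+1},\dots, x_d]$ via $x_i\mapsto t_i^n$. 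In particular $\Spec B_0$ is smooth over $W$, and because $\Spec B_0 \to [\Spec B_0/\mu_n^s]$ is fppf surjective, smoothness of the structure morphism $\bar{\Xc}_{\underline D,n}\to \Spec W$ follows by fppf descent. The only delicate step is recognising the fibre-product defining the root stack as the quotient $[\Spec B/\mu_n^m]$; once this is in hand, both statements reduce to standard properties of diagonalisable group schemes and fppf descent.
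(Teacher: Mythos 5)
Your proposal is correct, and its skeleton (étale-local reduction to coordinates adapted to $D$, the local presentation of the root stack as a $\mu_n^m$-quotient of an affine scheme that is étale over a polynomial $W$-algebra, the observation that components with $a_j$ a unit contribute trivially) matches the paper's, which reduces to $X=\Ab^d_W$, $D=\{z_1\cdots z_m=0\}$ and the model $\Xc_{D,n}\cong[\Ab^1_W/\mu_n]^m\times\Ab^{d-m}_W$. Where you genuinely diverge is the final smoothness step in the case $p\mid n$ (which is the case the paper actually needs, e.g.\ $n=p$ in the proof of Lemma \ref{lemma:Shiho}, so $\mu_n$ is not smooth and the atlas $\Spec B_0\to[\Spec B_0/\mu_n^s]$ is finite flat but not smooth). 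The paper sidesteps this by trading the group: $[\Ab^1_W/\mu_n]\cong[(\Ab^1_W\times\Gb_{m})/\Gb_{m}]$ with $\lambda\cdot(z,w)=(\lambda z,\lambda^{-n}w)$, so smoothness is read off from an honest smooth groupoid presentation. You instead invoke ``fppf descent'' along the non-smooth atlas; this is valid, but the precise statement you are using should be named: a morphism is smooth as soon as its precomposition with a surjective, flat, locally finitely presented morphism is smooth (descent of smoothness along the source), which ultimately rests on descent of flatness and of regularity of geometric fibres along faithfully flat local homomorphisms. Note this is a one-directional statement — smoothness is not stable under precomposition with fppf maps — so ``fppf-local on the source'' should be quoted in exactly this direction, or reduced to the scheme case by pulling back a smooth atlas of the quotient stack. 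Each route buys something: yours avoids introducing the auxiliary $\Gb_m$-presentation and works verbatim for any $n$; the paper's avoids the descent lemma entirely and exhibits the stack as a quotient by a smooth group, which is also convenient later (linear quotient presentations). Two small points: tameness should be concluded via \cite[Theorem 3.2]{AOV} (the $\mu_n^m$-quotient presentation by a finite flat linearly reductive group scheme, or equivalently linearly reductive geometric stabilisers), not from Definition 3.1 alone; and when you say subgroups of $\mu_n^m$ are linearly reductive, this is the statement for geometric stabilisers over algebraically closed fields, which is exactly what the criterion requires.
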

\begin{proof}
Tameness follows from the fact that $\mu_n$ is a finite flat linearly reductive group scheme and the presentation of $X_{\underline{D},n}$ as a $\mu_n^m$-quotient (see \cite[Theorem 3.2(c)]{AOV}).

Algebraicity of root stacks holds by virtue of definition as a fibre product of algebraic stacks. We will thus focus on smoothness. Using a system of \'etale coordinates adapted to the divisor $D$, we may assume without loss of generality that $X=\Ab^d_W$ and $D$ the vanishing locus of $z_1\cdots z_m = 0$. By definition, there is an equivalence of stacks
$$\Xc_{D,n} \cong [\Ab_W^1/\mu_n]^m \times \Ab^{d-m}_W.$$
It therefore suffices to establish smoothness of $[\Ab_W^1/\mu_n]$. If $n$ is invertible in $W$, then the group scheme $\mu_n$ is \'etale. Smoothness of the affine line concludes the proof in this case. 

To treat the general case, we rewrite the quotient as follows:
$$[\Ab^1_W/\mu_n]=[(\Ab_W^1 \times \Gb_{W,m})/\Gb_{W,m}],$$
where the $\Gb_m$-action on the right-hand side is given by
$$\lambda\cdot{} (z,w)=(\lambda z,\lambda^{-p}w).$$
Smoothness of the multiplicative group and of the atlas $(\Ab_W^1 \times \Gb_{W,m})$ yields smoothness of the quotient stack.
\end{proof}
}

\begin{lemma}
{The diagram 
\[
\xymatrix{
(\Xc_k')_{\underline{D}',p} \ar[r] \ar[d]_{c_{\Dc'}} & \Xc_k \ar[d]^{c_{\Dc}} \\
\Theta_k^m \ar[r]^{F_{\Theta_k^m}} & \Theta_k^m
}
\]
is cartesian. Here, we employ Notation \ref{notation:prime} and Definition \ref{defi:root-stack}.}
\end{lemma}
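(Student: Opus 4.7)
The plan is to decompose $F_{\Theta_k^m}$ through its relative Frobenius and match each step against the definition of the root stack.

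First I would record the key identification. Since $\Theta = [\Ab^1/\Gb_m]$ is defined over $\Spec \Zb$, hence over $\Spec \Fb_p$, there is a canonical isomorphism $(\Theta_k^m)^{(k)} := \Theta_k^m \times_{k, F_k} k \cong \Theta_k^m$. Moreover, in characteristic $p$ the absolute Frobenius $F_{\Theta_{\Fb_p}^m}$ coincides with the endomorphism $[p]$, since both are induced by the $p$-th power maps on $\Ab^1$ and $\Gb_m$. Base-changing to $k$, one checks that under the identification above the relative Frobenius $Fr_{\Theta_k^m/k}\colon \Theta_k^m \to (\Theta_k^m)^{(k)}$ corresponds to the $k$-morphism $[p]\colon \Theta_k^m \to \Theta_k^m$.

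Next I would exploit the standard factorisation $F_{\Theta_k^m} = \pi \circ Fr_{\Theta_k^m/k}$, where $\pi\colon (\Theta_k^m)^{(k)} \to \Theta_k^m$ denotes the projection to the first factor (which is \emph{not} a $k$-morphism). Because fibre products compose along a composition of morphisms, we get
\[
\Xc_k \times_{c_{\Dc},\, F_{\Theta_k^m}} \Theta_k^m \;\cong\; \bigl(\Xc_k \times_{c_{\Dc},\, \pi} (\Theta_k^m)^{(k)}\bigr) \times_{(\Theta_k^m)^{(k)},\, Fr_{\Theta_k^m/k}} \Theta_k^m.
\]
By the very definition of Frobenius twist, the inner fibre product equals $\Xc_k \times_{k, F_k} k = \Xc_k'$, and the induced map $\Xc_k' \to (\Theta_k^m)^{(k)}$ is (modulo the canonical identification) the Frobenius twist $c_{\Dc'}$ of $c_{\Dc}$.

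Applying the identification from the first paragraph --- so that $Fr_{\Theta_k^m/k}$ turns into $[p]$ and $(\Theta_k^m)^{(k)}$ turns into $\Theta_k^m$ --- the outer fibre product becomes $\Xc_k' \times_{c_{\Dc'},\, [p]} \Theta_k^m$, which by Definition \ref{defi:root-stack} is exactly $(\Xc_k')_{\underline{D}', p}$. The only delicate point --- and the real obstacle --- is the bookkeeping in the first step: one must separate which morphisms are $k$-morphisms (such as $[p]$ and the relative Frobenius) from those defined only over $\Fb_p$ (such as $F_{\Theta_k^m}$ and the projection $\pi$), and check that the canonical identification $(\Theta_k^m)^{(k)} \cong \Theta_k^m$, available precisely because $\Theta$ descends to $\Fb_p$, intertwines $Fr_{\Theta_k^m/k}$ with $[p]$. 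Once this identification is pinned down, the rest is a formal manipulation of Cartesian squares.
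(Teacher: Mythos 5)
Your argument is correct and is essentially the paper's own proof: both decompose the absolute Frobenius of $\Theta_k^m$ into a $p$-th power part and an arithmetic twist part (the paper writes $F_{\Theta_k}=(\id_{\Theta_{\Zb}}\times\varphi)\circ[p]_{\Theta}$ directly via descent of $\Theta$ to $\Zb$, you via the standard factorisation $F=\pi\circ Fr$ together with the identification $Fr_{\Theta_k^m/k}\cong[p]$ coming from descent to $\Fb_p$), and then identify the two successive base changes with the Frobenius twist and the order-$p$ root stack, respectively. The bookkeeping you flag as the delicate point is exactly the content of the paper's displayed composition, so nothing is missing.
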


\begin{proof}
{Let us denote by $\varphi\colon k \to k$ the Frobenius automorphism $k$.

We may then express $F_{\Theta_k}$ as the composition
$$\Theta_k \xrightarrow{[p]_{\Theta}}  \Theta_k = \Theta_{\Zb} \times \Spec k \xrightarrow{(\id_{\Theta_{\Zb}} \times \varphi)} \Theta_{\Zb} \times \Spec k = \Theta_k,$$ 
where $[p]_{\Theta}\colon \Theta_k \to \Theta_k$ denotes the $p$-th power map of $\Theta_k$ sending a pair $(L,s)$ to $(L^p,s^s)$, and $\Theta_{\Zb}$ refers to the stack $[\Ab^1_{\Zb} / \Gb_{m,\Zb}]$.

This shows that base change along $F_{\Theta^m_k}$ agrees with base change along $\varphi$, corresponding to a Frobenius twist, followed by base change along the map $[p]\colon \Theta^m_k \to \Theta_k^m$, corresponding to the iterated root stack construction.
}
\end{proof}

\begin{lemma}\label{lemma:twist}
A Frobenius lift $\phi\colon W \to W$ gives rise to a lift $F_{\Theta}(\phi)\colon \Theta_W \to \Theta_W $ of $F_{\Theta_k}$. 
\end{lemma}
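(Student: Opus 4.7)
The plan is to lift the factorization of $F_{\Theta_k}$ established in the preceding lemma. The $p$-th power map $[p]_\Theta\colon \Theta_\Zb \to \Theta_\Zb$ already exists integrally: on the canonical atlas $\Ab^1_\Zb$ it is given by $z \mapsto z^p$, which is $\Gb_{m,\Zb}$-equivariant with respect to the $p$-th power homomorphism $\lambda \mapsto \lambda^p$ on $\Gb_{m,\Zb}$ (since $(\lambda z)^p = \lambda^p z^p$), and hence descends to a well-defined morphism of the quotient stacks. Base-change along $\Zb \to W$ produces $[p]_{\Theta_W}\colon \Theta_W \to \Theta_W$.

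Any ring endomorphism of $W$ induces a self-map of $\Theta_W = \Theta_\Zb \times \Spec W$ by acting on the second factor. I would therefore set
\[ F_\Theta(\phi) := (\id_{\Theta_\Zb} \times \phi) \circ [p]_{\Theta_W}\colon \Theta_W \to \Theta_W. \]
To verify this is a lift of $F_{\Theta_k}$, I reduce modulo $p$: since $\phi$ is a Frobenius lift of $W$ it reduces to the Frobenius automorphism $\varphi$ of $k$, while $[p]_{\Theta_W}$ reduces tautologically to $[p]_{\Theta_k}$. The mod-$p$ reduction of $F_\Theta(\phi)$ is thus the composition $(\id_{\Theta_\Zb} \times \varphi) \circ [p]_{\Theta_k}$, which is exactly the factorization of $F_{\Theta_k}$ recorded in the previous lemma.

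No substantial difficulty arises; the lemma is essentially a packaging result. The only point worth double-checking is the $\Gb_m$-equivariance of the $p$-th power map on $\Ab^1$ with respect to the twisted action through $\lambda \mapsto \lambda^p$, which is immediate from $(\lambda z)^p = \lambda^p z^p$ and ensures that $[p]_\Theta$ genuinely exists as a morphism of stacks over $\Zb$ prior to any base change.
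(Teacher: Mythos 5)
Your proof is correct and follows essentially the same route as the paper: both define $F_\Theta(\phi)$ as the composition of the integrally defined $p$-th power map $[p]_\Theta$ with the twist $\id_{\Theta_\Zb}\times\phi$, lifting the factorisation of $F_{\Theta_k}$ from the preceding lemma. Your added checks (equivariance of $z\mapsto z^p$ and the mod-$p$ verification) are fine details the paper leaves implicit.
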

\begin{proof}
{ We define $F_{\Theta}(\phi)$ to be the composition 
$$\Theta_W \xrightarrow{[p]_{\Theta}}  \Theta_W = \Theta_{\Zb} \times \Spec W \xrightarrow{(\id_{\Theta_{\Zb}} \times \phi)} \Theta_{\Zb} \times \Spec W = \Theta_W,$$ 
where $[p]_{\Theta}\colon \Theta_W \to \Theta_W$ denotes the $p$-th power map of $\Theta_k$.}
\end{proof}

\subsection{Logarithmic connections via stacks}\label{sub:log}
Our approach to Proposition~\ref{prop:log-Xu} relies on an observation due to Martin Olsson (see \cite[Section 9]{Olsson}), according to which the sheaf $\Omega_{\Xc} ^1(\log \Dc)$ is isomorphic to the sheaf of relative K\"ahler forms $\Omega_{\Xc /\Theta^m}$, where $\Theta^m = [\Ab^1/\G_m]^m$ is the stack classifying effective generalised Cartier divisors, and $c_D\colon \Xc\to \Theta^m$ is the classifying map corresponding to the smooth irreducible components $\Dc=\bigcup_{i=1}^m \Dc_i$.  

We would like to emphasise that we learnt about this point of view from \cite[Chapitre 2]{Laaroussi}, which contains an in-depth treatment of Olsson's point of view on logarithmic differential forms.

For the benefit of the reader, we will unravel the definition of $\Omega^1_{\Xc/\Theta}$ until we arrive at a more elementary version of Olsson's isomorphism which does not refer to stacks. In the following example we will omit the subscript $W$ referring to the base, since the choice of base scheme does not play any role in the considerations to follow.  

\begin{example}[Olsson's isomorphism for $m=1$]
Let $c_D \colon \Xc \to \Theta = [\Ab^1/\Gb_m]$ be a morphism classifying an effective Cartier divisor $\Dc$. Let us assume that $\Dc$ is smooth. The base change 
$$\Xc \times_{\Theta} \Ab^1 = \mathcal T$$
agrees with the $\Gb_m$-torsor $\mathcal T\xrightarrow{\tau} \Xc$ given by the total space of the line bundle $\Oc_{\Xc}(\Dc)$ minus the zero section. Note that there is a natural function $f$  given by the projection of 
$\mathcal T$ to  $\mathbb A^1$
which vanishes along
 $\mathcal T \times_{\Xc} \Dc$ of first order.

Olsson's isomorphism amounts to the assertion that there is a $\Gb_m$-equivariant isomorphism of sheaves
$\tau^*\Omega_{\Xc}^1(\log \Dc) \simeq \Omega_{\mathcal T/\Ab^1}^1=:\Omega^1_f,$ 
which implies  $\Omega^1_{\Xc}(\log \Dc) \simeq (\tau_*\Omega_f^1)^{\Gb_m}$. A detailed computation for   $X=\Ab^1$  is given in \cite[Exemple~2.1.6]{Laaroussi}. The general case can be reduced to this one via \'etale descent.
\end{example}

Henceforth, we identify a formal connection $(\widehat{E},\widehat{\nabla})$ with logarithmic poles on $(\widehat{\Xc},\widehat{\Dc})/W$ with a relative connection on the pair $\widehat{\Xc}/\Theta^m$, using Olsson's isomorphism $\Omega^1_{\widehat{\Xc}/\Theta_W^m}= \Omega^1_{\widehat{\Xc}}(\log \Dc)$.
{
As shown in Lemma \ref{lemma:smooth}, the generalised root stack $X^{(\Theta^m)}$ is a smooth algebraic stack. Thus, there is a well-defined theory of flat {($p$-)}connections on this stack, which can be defined in terms of flat ($p$-)connections on a smooth atlas satisfying a descent condition. This notion will play a key role in the remainder of this section. We emphasise that we will not need to extend the Frobenius pullback functor $F^*$ from Corollary \ref{cor:Cartier-DM} to arbitrary smooth algebraic stacks.
In the following we denote by $X^{\phi}$ the twist of $X$ with respect to a fixed Frobenius lift $\phi \colon W \to W.$
\begin{lemma}\label{lemma:mod-p}
Pullback along the map $\rho\colon\widehat{X}^{(\Theta^m)} \to \widehat{X}^\phi$ yields a fully faithful functor
$$\pConn(\widehat{X}^{\phi},D^{\phi})=\pConn(\widehat{X}^{\phi}/\Theta^m)\hookrightarrow \pConn(\widehat{X}^{(\Theta^m)}/\Theta^m).$$
Furthermore, an object $(M,\nabla') \in \pConn(\widehat{X}^{(\Theta^m)}/\Theta^m)$ belongs to $\pConn(\widehat{X}^{\phi}/\Theta^m)$ if and only if its special fibre belongs to $\pConn(X'/\Theta_k^m)$.
\end{lemma}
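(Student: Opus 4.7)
The plan is to first identify $\rho$ as a root-stack projection and then to translate the statement into a concrete descent problem along a Kummer cover.

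By Lemma~\ref{lemma:twist} we may write $F_\Theta(\phi)^m = (\id_{\Theta_{\Zb}^m} \times \phi) \circ [p]^m$, and consequently the fibre product defining $\widehat{X}^{(\Theta^m)}$ factors as
\[
\widehat{X}^{(\Theta^m)} \;=\; \widehat{X}^{\phi} \times_{\Theta_W^m,[p]^m} \Theta_W^m,
\]
which is the $p$-th root stack of $(\widehat{X}^{\phi},D^{\phi})$ in the sense of Definition~\ref{defi:root-stack}. The map $\rho$ is the natural projection from this root stack to its base. Base change in the corresponding cartesian square gives a canonical isomorphism $\rho^*\Omega^1_{\widehat{X}^{\phi}/\Theta^m} \cong \Omega^1_{\widehat{X}^{(\Theta^m)}/\Theta^m}$, so that pullback along $\rho$ does indeed produce a functor on $p$-connections.

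For fully faithfulness I would argue étale-locally on $\widehat{X}^{\phi}$: choose coordinates $z_1,\dots,z_d$ adapted to $D^{\phi}$ so that $D^{\phi}$ is cut out by $z_1\cdots z_m=0$. In such a chart with $A$ the corresponding coordinate ring, the root stack becomes $[\Spf A[t_1,\dots,t_m]/(t_i^p-z_i) \,/\, \mu_p^m]$, with $\mu_p^m$ acting by rescaling the $t_i$. Quasi-coherent sheaves on this stack correspond to $(\Zb/p)^m$-graded modules over the Kummer extension, and its $\mu_p^m$-invariants are just $A$. Hence $\rho_*\Oc_{\widehat{X}^{(\Theta^m)}}=\Oc_{\widehat{X}^{\phi}}$, and applied to inner Hom sheaves this gives fully faithfulness of $\rho^*$ on vector bundles, which extends automatically to $\pConn$ by functoriality of pullback and compatibility of $\rho^{*}\nabla'$ with the morphisms in question.

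To identify the essential image, note that in the local picture above an object $(M,\nabla')$ on the root stack descends along $\rho^*$ precisely when its underlying $\mu_p^m$-equivariant module is concentrated in degree zero, i.e., the $(\Zb/p)^m$-grading is trivial. This grading corresponds to a family of orthogonal idempotents $\{e_\chi\}_{\chi\in(\Zb/p)^m}$ in $\End_{\Oc}(M)$. Since $M$ is locally free on a $p$-adically complete base, these idempotents lift uniquely from their mod-$p$ reductions by the usual Hensel argument; in particular the grading is trivial globally iff it is trivial on the special fibre. The compatibility of $\nabla'$ with the descent is then automatic from the fully faithfulness established above, since the descended bundle carries a unique compatible $p$-connection.

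The main obstacle I foresee is bookkeeping around the local model: one has to make sure that the Olsson identification of $\Omega^1(\log)$ with relative differentials over $\Theta^m$ is compatible with $\rho^*$ in the way needed (this is where the factorisation $F_\Theta(\phi)=(\id\times\phi)\circ[p]$ is essential), and that the $\mu_p^m$-equivariance of the whole package $(M,\nabla')$, not just of $M$, is what gets lifted by the idempotent argument. Once these compatibilities are in place, the proof reduces to the clean Hensel-type lifting statement for idempotents in $\End_{\Oc}(M)$.
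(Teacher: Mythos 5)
Your identification of $\rho$ as the $p$-th root-stack projection of $(\widehat{X}^{\phi},D^{\phi})$ and your proof of full faithfulness are essentially correct, and in fact amount to an \'etale-local unpacking of the paper's argument: the paper deduces $\rho_*\rho^*\cong \id$ from tameness of the root stack, exactness of $\rho_*$ and the projection formula, while you compute the $\mu_p^m$-invariants of the Kummer algebra directly. Likewise, the identification $\Omega^1_{\widehat{X}^{(\Theta^m)}/\Theta^m}\cong\rho^*\Omega^1_{\widehat{X}^{\phi}/\Theta^m}$ follows from base change of relative differentials, so the functor is well defined; no objection there.

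The genuine gap is in your description of the essential image. It is not true that $(M,\nabla')$ descends along $\rho$ precisely when the $(\Zb/p)^m$-grading on $M$ is concentrated in degree zero: in the chart $B=A[t_1,\dots,t_m]/(t_i^p-z_i)$ the pullback $\rho^*N=B\otimes_A N$ has nonzero graded pieces in every degree (already $M=\Oc_{\widehat{X}^{(\Theta^m)}}=\rho^*\Oc$ does), whereas a graded $B$-module concentrated in degree zero is killed by every $t_i$ and hence is supported on the exceptional divisor, so it is never a pulled-back bundle. Relatedly, the projectors $e_\chi$ onto the graded pieces are $A$-linear but do not commute with multiplication by the $t_i$, so they are not endomorphisms of $M$ as a sheaf on the root stack, and the Hensel/idempotent-lifting step cannot be applied as formulated. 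The correct criterion is that $M\cong\rho^*N$ if and only if the counit $\rho^*\rho_*M\to M$ (locally the multiplication map $B\otimes_A M_0\to M$, where $M_0$ is the degree-zero part) is an isomorphism, equivalently the stabilizer $\mu_p^m$ acts trivially on the fibres of $M$ along the root divisor; this condition can then be checked on the special fibre because the isotypic pieces are $W$-flat and Nakayama (equivalently, your uniqueness-of-idempotent-lifts argument, applied to the right object) detects their vanishing mod $p$ --- which is exactly the paper's one-line assertion that a $\mu_p$-module over $W$ has trivial action if and only if its reduction mod $p$ does. Your final point, that the descended bundle carries a unique compatible $p$-connection, is fine once full faithfulness is in place, so repairing the criterion as above restores the proof.
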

\begin{proof}
Full faithfullness follows from the fact that $\rho_*\rho^*$ is isomorphic to the identity functor via the unit of the adjunction. To see this one uses tameness of the root stack (see Lemma \ref{lemma:smooth}), which yields exactness of the functor $\rho_*$ (\cite[Definition 3.1 \& Theorem 3.2]{AOV}). The projection formula and the formula $\rho_*\rho^*(\Oc,d) \simeq (\Oc,d)$ now shows that the unit map of the adjunction is an isomorphism.

The second claim follows from the fact that a $\mu_p$-module over $W$ is endowed with a trivial action if and only if the action on the special fibre is trivial.
\end{proof}
}
{
As a next step we extend a result of Shiho to the log-setting \cite{Shiho}. We denote by the subscript $\qnilp$ the full sub-category consisting of flat {($p$-)}connections, for which we assume quasi-nilpotence, as defined in \cite[Definition 1.5]{Shiho}.

\begin{proposition}\label{prop:Shiho-log}
We fix a Frobenius lift $\phi$ for $W$. Assume that $(\widehat{X},\widehat{D})$ is a log-pair consisting of scheme, which is endowed with a relative log-Frobenius lift $\widehat{F}$ compatible with $\phi$, i.e., $\widehat{F}^*|_{W\subset \Oc} = \phi$. Then, there is an equivalence of categories $$\pConn(\widehat{X}^{({\Theta^m})}/ {\Theta^m)}_{\qnilp} \to  \Conn(\widehat{X}/{\Theta^m})_{\qnilp}.$$
\end{proposition}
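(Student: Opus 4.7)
The plan is to adapt Shiho's equivalence \cite{Shiho} to the logarithmic setting by passing to the stack $\Theta^m$. In Shiho's original, smooth setting, a Frobenius lift $\widehat F \colon \widehat X \to \widehat X^\phi$ induces, by pullback, an equivalence between quasi-nilpotent $p$-connections on $\widehat X^\phi$ and quasi-nilpotent connections on $\widehat X$, with the factor of $p$ absorbed by the divisibility $d\widehat F \equiv 0 \pmod p$. Via Olsson's isomorphism $\Omega^1_{\widehat X/\Theta^m} \cong \Omega^1_{\widehat X}(\log \widehat D)$ invoked in Subsection~\ref{sub:log}, our assertion is precisely the same statement carried out relatively over the base stack $\Theta^m$.

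First, I would produce the functor. The log-compatibility of $\widehat F$ together with the Frobenius lift $F_{\Theta^m}(\phi)$ of Lemma~\ref{lemma:twist} makes $\widehat F$ a morphism of $\Theta_W^m$-stacks, and hence yields a canonical factorisation $\widehat F_{\rm rel}\colon \widehat X \to \widehat X^{(\Theta^m)}$ through the relative Frobenius twist. In \'etale coordinates $(x_1,\dots,x_d)$ with $\widehat D = \{x_1 \cdots x_m = 0\}$, log-compatibility forces $\widehat F^* x_i = x_i^p u_i$ for units $u_i$, so $\widehat F^*(dx_i/x_i) = p\, dx_i/x_i + du_i/u_i$ lies in $p\,\Omega^1_{\widehat X}(\log \widehat D)$. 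This shows that $d\widehat F_{\rm rel}$ is divisible by $p$ in $\Omega^1_{\widehat X/\Theta^m}$, and consequently that pullback along $\widehat F_{\rm rel}$ converts a relative $p$-connection into a genuine relative connection. Preservation of quasi-nilpotence is local and immediate, since quasi-nilpotence is an \'etale-local condition.

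For the quasi-inverse I would reproduce Shiho's Taylor-series construction relative to $\Theta^m$: in the coordinates above, the log-derivations $x_i\partial_i$ (for $i\le m$) dual to $dx_i/x_i$ together with the usual $\partial_i$ for $i>m$ play the role of the derivations in formula~\eqref{eqn:psi}, and one writes down the inverse functor on sections via the analogous Taylor expansion. Convergence holds because the differences $\widehat F^* x_i - \widehat G^* x_i$ lie in $p\,\Oc$, and the quasi-nilpotence hypothesis ensures that, modulo any fixed power of $p$, the series truncates to a finite sum. The main obstacle will be descent: one has to verify that the locally defined inverses glue and define a functor landing in $\pConn(\widehat X^{(\Theta^m)}/\Theta^m)$ rather than merely $\pConn(\widehat X^\phi/\Theta^m)$. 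This amounts to checking that the Taylor formula is compatible with the $\mu_p^m$-action defining the root-stack structure of $\widehat X^{(\Theta^m)}$, for which one mimics Lemma~\ref{lemma:ab} in the relative setting and then invokes Lemma~\ref{lemma:mod-p} to reduce the equivariance verification to its special fibre, where the $\mu_p^m$-action becomes transparent.
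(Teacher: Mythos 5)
Your construction of the forward functor is essentially the paper's: the compatibility of $\widehat F$ with the lift $F_{\Theta^m}(\phi)$ of Lemma \ref{lemma:twist} gives a factorisation $\widehat{Fr}_{X/\Theta^m}\colon \widehat X \to \widehat X^{(\Theta^m)}$ through the relative twist, and pullback along it divided by $p$ is well defined. The gap is in your quasi-inverse. The Taylor formula \eqref{eqn:psi} is only \emph{transition data} comparing pullbacks along two different Frobenius lifts; it does not invert Frobenius pullback. The relative Frobenius is a finite flat morphism of degree $p^{\dim}$, and producing a preimage of a given quasi-nilpotent relative connection (local essential surjectivity, plus full faithfulness, plus control of the glueing) is exactly the content of the local Ogus--Vologodsky/Shiho correspondence --- the substantive input that cannot be replaced by ``writing down the inverse on sections via the analogous Taylor expansion.'' Moreover, your descent step cannot work as described: the quasi-inverse must output objects that live genuinely on the root stack $\widehat X^{(\Theta^m)}$ and not on $\widehat X^{\phi}$ --- compare the paper's example where the preimage of $(\Oc(-D_j),\nabla_j)$ is a $p$-th root of $\Oc(-D_j)$, which only exists on the root stack. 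Lemma \ref{lemma:mod-p} goes in the opposite direction (it characterises when an object on the root stack is pulled back from $\widehat X^{\phi}$), so checking ``$\mu_p^m$-equivariance on the special fibre'' cannot manufacture the required fractional ($\mu_p$-weighted) structure from local data on $\widehat X$.

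The paper avoids precisely this hand-made log generalisation (which it explicitly declines to carry out). Its proof uses the smooth atlas $\Ab^m \to \Theta^m$: since $\Omega^1_{\widehat X/\Theta^m}=\Omega^1_{\widehat X}(\log \widehat D)$, the base change $\widehat X\times_{\Theta^m}\Ab^m \to \Ab^m$ is a smooth morphism with a compatible lift of relative Frobenius, so Shiho's theorem applies there verbatim (no log structure in sight), and the equivalence over $\Theta^m$ then follows by faithfully flat descent along the atlas. If you want to salvage your route, you would have to actually extend Shiho's local construction of the quasi-inverse to the log/root-stack setting, including the fractional objects; otherwise, reduce to the non-log statement over the atlas as the paper does.
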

\begin{proof}
By assumption, the commutative diagram defined over $k$
\[
\xymatrix{
X_k \ar[r]^{F_{X_k}} \ar[d] & X_k \ar[d] \\
\Theta_k^m \ar[r]^{F_{\Theta_k^m}} & \Theta_k^m
}
\]
lifts to the ring $W$ of mixed characteristic. Here, we denote by $F_{Y_k}$ the absolute Frobenius morphism of a $k$-scheme $Y_k$. The lift of this diagram is a diagram of formal $W$-schemes
\[
\xymatrix{
\widehat{X} \ar[r]^{\widehat{F}_X} \ar[d] & \widehat{X} \ar[d] \\
{\Theta}^m \ar[r]^{\widehat{F}_{\Theta^m}} & {\Theta}^m.
}
\]
We recall that the lift of the bottom row is given by a Frobenius lift $F_{\Theta}(\theta)$ of Lemma \ref{lemma:twist}.

Recall that $\widehat{X}^{(\Theta^m)}$ is the fibre product $\widehat{X} \times_{(\Theta^m),\widehat{F}_{\Theta^m}} {\Theta^m}.$ By definition, it is a formal $W$-lift of the relative Frobenius twist of $X_k/\Theta^m_k$. The universal property of the fibre product yields a $W$-morphism 
\begin{equation}\label{eqn:lift}
\widehat{Fr}_{X/\Theta^m}\colon \widehat{X} \to  \widehat{X}^{( {\Theta^m})}
\end{equation}
lifting the relative Frobenius. We use that $\Ab^m \to \Theta^m = [\Ab^1/\Gb_{m}]^m$ is an atlas for the stack $\Theta^m$. Therefore, there is a square of categories \[
\xymatrix{
\pConn(\widehat{X}^{( \widehat{\Ab^m})}/ {\widehat{\Ab^m}})_{\qnilp} \ar[r]^-{\cong}  \ar[d] & \Conn(\widehat{X}/\widehat{\Ab^m})_{\qnilp} \ar[d] \\
\pConn(\widehat{X}^{({\Theta^m})}/ {\Theta^m)}_{\qnilp} \ar[r]^-{C^{-1}}& \Conn(\widehat{X}/{\Theta^m})_{\qnilp}
}
\]
which commutes up to an invertible natural transformation. The functor $C^{-1}$ is defined as in Shiho's article \cite{Shiho} and requires the choice of a formal lift of the relative Frobenius morphism. We use the map provided by \eqref{eqn:lift}. An object $(V,\tilde{\nabla}) \in \pConn(\widehat{X}^{(\Theta^m)}/\Theta^m)_{\qnilp}$ is sent to 
$$(E,\nabla) = \widehat{Fr}_{X/\Theta^m}^*\big(V,\frac{\widehat{\nabla}}{p}\big):=\big(\widehat{Fr}_{X/\Theta^m}^*V,
\frac{\widehat{Fr}_{X/\Theta^m}^*{\widehat{\nabla}}}{p}\big).$$

The top row of this diagram is an equivalence of categories by  the main result of \cite{Shiho}. As an application of faithfully flat descent theory we obtain that the bottom row is an equivalence of categories too.
\end{proof}
\begin{remark}
The formulation of Proposition \ref{prop:Shiho-log} involves a generalised root stack, which possibly renders the statement somewhat obscure. We emphasise that the category of vector bundles with $p$-connections on a root stack admits an explicit description in terms of parabolic structures (see for example \cite{TV}).
\end{remark}
 Below, we give an example to explain why one should expect either root stacks or parabolic structures to arise. 
\begin{example}
Let $(X_0,D_0)/k$ be an arbitrary log-pair, with $D=\bigcup D_i$ being a strict normal crossings divisor with smooth irreducible components $D_i$. We fix an index $j$. The trivial flat connection $(\Oc_X,d)$ can be restricted to the subsheaf $\Oc(-D_j)$, and produces a flat log-connection $\nabla_j$ on $\Oc(-D_j)$ with zero $p$-curvature. Due to vanishing of the $p$-curvature we expect $(\Oc(-D_j),\nabla_j)$ to be a Frobenius pullback. However, this would produce a $p$-th root of the line bundle $\Oc(-D_j)$.  By construction, such a root exists on the generalised root stack $X^{(\Theta^m)}$. In the theory of parabolic flat connections, a $p$-th root can also be encoded by means of adding a fractional parabolic weight along $D_j$.
\end{example}

\begin{lemma}\label{lemma:Shiho}
Proposition \ref{prop:log-Xu} holds for an affine formal scheme $\widehat X$.
\end{lemma}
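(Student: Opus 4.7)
\emph{Strategy.} The plan is to combine Proposition~\ref{prop:Shiho-log} (Shiho's logarithmic equivalence) with the root-stack descent criterion of Lemma~\ref{lemma:mod-p}, which together realise $C^{-1}$ as a fully faithful functor whose essential image is determined by a local descent condition. I then translate that condition into the residue condition of the proposition by a local calculation. The affineness of $\widehat{X}$ is used to produce a global formal log-Frobenius lift, without which Proposition~\ref{prop:Shiho-log} does not apply.

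\emph{Construction of the factorisation.} After choosing formal \'etale coordinates $x_1,\ldots,x_d$ on $\widehat{X}$ with $\widehat{D}=\{x_1\cdots x_s=0\}$ and a Frobenius lift $\phi$ on $W$, the assignment $x_i\mapsto x_i^p$ defines a formal log-Frobenius lift $\widehat{F}:\widehat{X}\to\widehat{X}$ preserving $\widehat{D}$. Proposition~\ref{prop:Shiho-log} then provides an equivalence
\[
\pConn(\widehat{X}^{(\Theta^m)}/\Theta^m)_{\qnilp}\xrightarrow{\sim}\Conn(\widehat{X}/\Theta^m)_{\qnilp}\cong\Conn(\widehat{X},\widehat{D})_{\qnilp},
\]
the last identification being Olsson's. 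The absolute Frobenius factors as $\widehat{X}\xrightarrow{\widehat{Fr}_{X/\Theta^m}}\widehat{X}^{(\Theta^m)}\xrightarrow{\rho}\widehat{X}^{\phi}\to\widehat{X}$, which allows one to identify $C^{-1}=F^*\circ[\frac{1}{p}]$ (up to the harmless base-change equivalence $\pConn(\widehat{X},\widehat{D})_0\cong\pConn(\widehat{X}^\phi,\widehat{D}^\phi)_0$ induced by $\phi$) with the composition
\[
\pConn(\widehat{X}^\phi,\widehat{D}^\phi)_0\xrightarrow{\rho^*}\pConn(\widehat{X}^{(\Theta^m)}/\Theta^m)_{\qnilp}\xrightarrow{\sim}\Conn(\widehat{X},\widehat{D})_{\qnilp}.
\]
Full faithfulness is then immediate: $\rho^*$ is fully faithful by Lemma~\ref{lemma:mod-p}, and the second arrow is the Shiho equivalence. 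Moreover, Shiho restricts to $\pConn_0\cong\Conn_0$ via classical Cartier modulo $p$, so $C^{-1}$ indeed lands in $\Conn(\widehat{X},\widehat{D})_0$.

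\emph{Essential image and main obstacle.} The direct direction is a local computation: on a trivial bundle a $p$-connection in $\pConn_0$ has normal form $pd+\Phi$ with $\Phi\in p\,\Omega^1(\log\widehat{D})\otimes\End(V)$; since $\widehat{F}^*(dx_i/x_i)=p\,dx_i/x_i$, the connection $d+\widehat{F}^*(\Phi/p)$ has residues in $p\End$. For the converse, given $(E,\nabla)\in\Conn(\widehat{X},\widehat{D})_0$ with $p$-divisible residues, I apply the inverse of the Shiho equivalence to obtain $(V,\widetilde{\nabla})\in\pConn(\widehat{X}^{(\Theta^m)}/\Theta^m)_0$ and verify the descent criterion of Lemma~\ref{lemma:mod-p}: that $(V,\widetilde{\nabla})|_k$ lies in $\pConn(X'/\Theta^m_k)$, i.e., descends past $X_k^{(\Theta^m_k)}\cong(X'_k)_{D',p}\to X'$. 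Since the Higgs field on the special fibre vanishes, only the underlying bundle matters, and descent amounts to the $\mu_p^m$-weights of its parabolic description being integral. The main obstacle is proving that $p$-divisibility of the residues of $\nabla$ corresponds precisely to integrality of these weights; this translation requires unpacking the local form of Shiho's equivalence with log-poles and tracking how residue eigenvalues modulo $p$ encode the $\mu_p^m$-equivariance data on the special fibre of the root stack.
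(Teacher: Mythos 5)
Your setup — factoring $F^*\circ[\tfrac1p]$ (after twisting by $\phi$) as the fully faithful pullback $\rho^*$ of Lemma \ref{lemma:mod-p} followed by the Shiho-type equivalence of Proposition \ref{prop:Shiho-log}, using affineness to get a global log-Frobenius lift — is exactly the paper's route, and your forward computation of the essential image (residues get multiplied by $p$ under a log-Frobenius lift) is fine. The problem is the converse inclusion, which you yourself flag as ``the main obstacle'' and then leave unproved: you reduce it to the claim that $p$-divisibility of the residues of $\nabla$ forces the $\mu_p^m$-weights of the special fibre of the corresponding $p$-connection on the root stack to be integral, and you state that this ``requires unpacking the local form of Shiho's equivalence'' without carrying out that unpacking. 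As it stands, the proposal proves full faithfulness and one inclusion of the essential image, but not the statement of Proposition \ref{prop:log-Xu}; the dictionary between residue eigenvalues mod $p$ and the $\mu_p^m$-equivariance data is precisely the content that still has to be supplied, and it is not a formality (one has to know, e.g., how the log-Cartier correspondence in characteristic $p$ matches residues with the stabilizer action along the divisor).

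The paper closes this gap by a different and softer argument that avoids any weight bookkeeping: since the residues are $p$-divisible, the special fibre $(E_0,\nabla_0)$ is a flat connection \emph{without} log poles, and since by hypothesis its $p$-curvature vanishes, classical Cartier descent gives $(E_0,\nabla_0)\cong F^*(E_0^{\nabla_0})$. By uniqueness in the mod-$p$ equivalence, the special fibre $(M_0,\nabla_0')$ of the $p$-connection produced by the Shiho equivalence is therefore isomorphic to $C^{-1}(E_0^{\nabla_0},0)$, which manifestly lives in $\pConn(X_0'/k)\subset \pConn(X_0^{(\Theta_k^m)}/\Theta_k^m)$; the mod-$p$ criterion of Lemma \ref{lemma:mod-p} then shows that $(M,\nabla')$ descends to $\pConn(\widehat X^{\phi},\widehat D^{\phi})$. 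Note that this is also where the standing hypothesis of vanishing $p$-curvature on the special fibre enters essentially — your weight-integrality strategy makes no visible use of it, which is a warning sign that the local translation you postponed is where the real work (and this hypothesis) would have to appear. To repair your proof, either carry out that local analysis or substitute the Cartier-descent argument above.
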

\begin{proof}
It follows from Lemma \ref{lemma:twist} that $\widehat{X}^{(\Theta^m)}$ corresponds to { a root stack over the log-pair $(\widehat{X}^{\phi},\widehat{D}^{\phi})$, given by the same underlying log-pair $(X,D)$ with $W$-structure twisted by the chosen Frobenius lift $\phi$. In particular, there is an embedding (see Lemma \ref{lemma:mod-p})

$$\pConn(X^{\phi},D^{\phi}) \hookrightarrow \pConn(X^{(\Theta^m)}/\Theta^m).$$}

Using the identification  $(\widehat X,\widehat D) \simeq (\widehat X',\widehat D')$, provided by this isomorphism (here, perfectness of $k$ is   used), henceforth called $\sigma$, we obtain an equivalence 
$$C^{-1}\circ \sigma^{-1}\colon \pConn(\widehat{X}^{( {(\Theta^m})}/ {\Theta^m})_0 \cong \Conn(\widehat{X}/ {\Theta^m})_0.$$
Since the composition $\sigma^{-1}\circ \widehat{Fr}_{X/\Theta^m}$ is a lift of the absolute Frobenius morphism of $X_k$ (compatible with $D_k$), we see that $C^{-1}\circ \sigma^{-1}$  agrees with the functor $F^*\circ[\frac{1}{p}]$ of Proposition~\ref{prop:log-Xu}. 

{We therefore see that $F^* \circ [\frac{1}{p}]$ is given by the composition of functors
$$\pConn(X^{\phi},D^{\phi}) \hookrightarrow \pConn(\widehat{X}^{( {(\Theta^m})}/ {\Theta^m})_0  \cong \Conn(\widehat{X}/ {\Theta^m})_0.$$
To conclude the proof of the lemma it suffices to identify the essential image with the sub-category of flat log-connections with $p$-divisible residue.

An object in the image of this functor is a Frobenius pullback $F^*(E,\nabla)$, and therefore the residues vanish modulo $p$. Vice versa, if $(E,\nabla)$ has $p$-divisible residues and zero $p$-curvature, there exists $(M,\nabla') \in \pConn(\widehat{X}^{(\Theta^m})/\Theta^m)$, such that $$C^{-1}(M,\nabla') \simeq (E,\nabla).$$ 

The special fibre $(M_0,\nabla_0') \in \pConn(\widehat{X}_k^{(\Theta_k^m}))/\Theta_k^m)$ still satisfies the property $$C^{-1}(M_0,\nabla'_0) \simeq (E_0,\nabla_0),$$
and furthermore, the $p$-connection $(M_0,\nabla'_0)$ with this property is unique up to a unique isomorphism since $C^{-1}$ is an equivalence of categories. On the other hand, recall that by assumption in Proposition~\ref{prop:log-Xu}, 
$(E_0,\nabla_0)$ has  vanishing $p$-curvature. 
 
Applying Cartier descent to the connection (without log-poles) $(E_0,\nabla_0)$, we obtain
$$F^*(E_0^{\nabla_0}) \cong (E_0,\nabla_0).$$
Thus, there is an isomorphism $(M_0,\nabla'_0) \simeq C^{-1}(E_0^{\nabla_0},0).$
The right-hand side belongs to the sub-category $\pConn(X'_0/k) \subset \pConn(X_0^{(\Theta^m_k)}/\Theta^m_k)$. By  Lemma \ref{lemma:mod-p} this allows one to conclude that $(M,\nabla') \in \pConn(\widehat{X}^{(\Theta^m)}/\Theta^m)$.
}
\end{proof}
}

\begin{proof}[Proof of Proposition \ref{prop:log-Xu}]
The assertion that a given functor between sheaves of categories is an equivalence can be verified \'etale locally. We may therefore reduce everything to the situation where a logarithmic Frobenius lift exists and apply Lemma \ref{lemma:Shiho}.
\end{proof}

\begin{remark}
Xu's lift of the inverse Cartier operator is currently only available in an absolute setting. Indeed, the main result of \cite{Xu} is an equivalence of categories $\Conn(\widehat{X}/W)^{\qnilp} \cong \pConn(\widehat{X}/W)^{\qnilp}$. 
The superscript  $\qnilp$  stands for ``quasi-nilpotent'' and  indicates that the $p$-curvature of $\nabla_k$ is nilpotent of an appropriate level, see \cite[2.6]{EG20} for further details.
 In order to apply the argument of the proof of Lemma~\ref{lemma:Shiho} to prove Proposition~\ref{prop:log-Xu}, a relative version of Xu's result would be required. This would then immediately yield a logarithmic version of the same result, without the restrictive assumption of vanishing $p$-curvatures.
\end{remark}

{ We conclude this section by remarking that over $k$, a logarithmic analogue of the Ogus-Vologodsky correspondence was developed by Schepler \cite{Schepler}. The treatment given in \emph{loc. cit.} avoids the theory of generalised root stacks and is instead based on the formalism of indexed algebras.}

\subsection{The flow functor $\Phi$} \label{sec:Phi}

The purpose of this section is to present a modified version of $F^*$ for filtered flat connections. The resulting functor is merely a convenient tool to recast the Higgs-de Rham flow of \cite{LSZ} and is closely related to the theory of Fontaine--Lafaille modules, see \cite{Faltings}.
We remind the reader of our standing assumption $p \geq 3$.

\begin{definition}
We denote by $\FConn(\widehat{X},\widehat{D})$ the category of triples $(\widehat E, \widehat \nabla,\Fil)$, where $(\widehat E,\widehat \nabla)$ is a formal  vector bundle endowed with a formal flat log-connection, and $\Fil$ is a \emph{Griffiths-transverse} descending filtration by locally free locally split subsheaves, i.e., $$\nabla(\Fil^i) \subset \Fil^{i-1} \otimes \Omega_{\widehat{X}/W}^1(\log \widehat D)$$  and $gr^{\Fil}$ is locally free.
\end{definition}

There is a well-known construction allowing us to pass from a filtered flat connection to a $p$-connection (see Definition~\ref{def:pconn}), which is closely related to the Artin-Rees construction.

\begin{definition}[Artin-Rees construction] \label{dfn:AR}
\begin{enumerate}
    \item[(a)] The Artin-Rees construction $\AR_{p}^{\Fil^{\bullet}}( \widehat E) = \widetilde{E}$ is the sheaf of $\Oc_X$-sub-modules 
    $$\sum_{i\in \Nb} \Fil^i \otimes p^{-i} \subset \widehat E \otimes_W K$$ where multiplication by $p$ maps the term $\Fil^i \otimes p^{-i}$ to $\Fil^{i-1} \otimes p^{-i+1}$ via the  inclusion $\Fil^i 
   \xrightarrow{ 
    \subset}  \Fil^{i-1}$  on the left and $p^{-i} \xrightarrow{\cdot p} p^{-i+1}$ on the right. The $p$-connection $p \widehat \nabla$ on $\widehat E \otimes_W K$ stabilises $\AR^{\Fil^{\bullet}}_{p}(\widehat E) \subset \widehat E\otimes_WK $ and defines a $p$-connection on it denoted by $\widetilde{\nabla}$.
     The resulting functor is denoted by
    $$\AR_{p}\colon \FConn \to \pConn.$$
    \item[(b)] There is a canonical map $\bigoplus_i \Fil^i \to \AR^{\Fil^{\bullet}}_{p}(\widehat E)$ sending $\Fil^i$ to $\Fil^i \otimes p^{-i}$ via the map $s \mapsto s \otimes p^{-i}$. For $s \in \Fil^i$ we follow the standard convention to denote its image in $\AR_{p}( \widehat E)$ by $(s)_i$.
\end{enumerate}
\end{definition}

The following definition of the flow functor $\Phi$ builds up on the $p$-connection defined above.
\begin{definition} \label{dfn:Phi}
\begin{enumerate}
\item[(a)] For each affine subscheme endowed with a Frobenius lift $\widehat{F}$ we define 
$$\Phi(\widehat E, \widehat \nabla,\Fil):= \widehat V=\widehat{F}^*\AR^{\Fil^{\bullet}}_{p}(\widehat E)$$ endowed with the flat connection $\Phi( \widehat \nabla)$ which assigns to a local section $s$ of $\tilde{F}^{-1}\widehat V$ the section
$$\frac{\id \otimes \widehat{F}^*}{p}\widetilde{\nabla}(s).$$
 
\item[(b)] These locally defined objects (defined with respect to the choice $\widehat{F}$ and $\widehat{G}$ of two Frobenius lifts) are glued together with respect to the explicit cocycle 
\begin{multline}\label{eqn:epsilon}
\varepsilon_{\tilde{G}\tilde{F}}\big((s)_i \otimes 1\big) =\\
 \sum_{|\underline{i}| \leq i} \big(\partial^{\underline{i}}(s)\big)_{i-|i|} \otimes \frac{\prod_{j=1}^d(\widehat{F}_{j}-\widehat{G}_{j})^{i_j}}{p^{|\underline{i}|}\underline{i}!} + \\  \sum_{|\underline{i}|> i} p^{|\underline{i}|-i}\big(\partial^{\underline{i}}(s)\big)_0 \otimes \frac{\prod_{j=1}^d(\widehat{F}_{j}-\widehat{G}_{j})^{i_j}}{p^{|\underline{i}|}\underline{i}!},
\end{multline}
where $s$ is assumed to be a local section of $\Fil^i$.
\end{enumerate}
\end{definition}

Equation \eqref{eqn:epsilon} originates from Faltings's article \cite[p. 49]{Faltings}, where it is referred to as the \emph{Taylor formula}. As $\widehat E$ is locally free, the cocycle identity  can be checked away of $\widehat D$. 
 It is then a consequence of \cite[Equation~(13.5.6)]{Xu}.

\medskip

 From now on and until the end of the article, we make the following assumption:\medskip

{\bf $\big(\mathbf{\ast}\big)$:} All Frobenius lifts $\widehat{F}$ chosen have the  following extra property: for $X$ affine, let us denote by $F_{W_2}$ the lift to $X_{W_2}= \widehat X_W\otimes_WW_2$. Then  we request
\ga{}{F_{W_2}^*\mathcal{O}_{X_{W_2}}(- D_{W_2})= \mathcal{O}_{X_{W_2}}(- pD_{W_2}).\notag}
See for example \cite[Corollary~9.10]{EV92} for the feasibility of this assumption.  This enables us to apply \cite[Proof~10.7]{EV92}.

\medskip

Given $(\widehat E, \widehat \nabla, \Fil)$ as in Definition~\ref{dfn:AR}, we denote by 
\ga{}{(\mathcal H ,\theta)=\oplus_{i=0}^w ( gr^{\Fil} E_k, gr^{\Fil}\nabla_k) \notag}
the  associated Higgs bundle over $X_k$. Here $0\le w\le r$ where $w$ is the width of $\Fil$, so $\Fil^0 \widehat E= \widehat E, \Fil^{w+1}\widehat  E=\{0\}$ and $r$ is the rank of $\widehat E$. Recall from \cite[Theorem~2.8]{OV07} that if $w<p$ there is a Cartier inverse functor $C^{-1}(\mathcal{H}, \theta)$ which is a flat connection on a vector bundle of rank $r$.  

\begin{proposition} \label{prop:PhiC}
Assuming  $(\ast)$ and $w<p$, then
 $\Phi( \widehat E, \widehat \nabla, {\rm Fil})_k$  is isomorphic to $C^{-1}(\mathcal H, \theta)$ up to sign of $\theta$. In particular, if $(\mathcal H, \theta)$ is (semi-)stable, so is $\Phi( \widehat E,\ \widehat \nabla, {\rm Fil})_k$.

\end{proposition}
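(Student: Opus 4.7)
The plan is to work Zariski-locally on $\widehat{X}$ using \'etale coordinates $x_{1},\dots,x_{d}$ adapted to $\widehat{D}$ together with a Frobenius lift $\widehat{F}$ satisfying $(\ast)$. In such a chart, $\Phi(\widehat{E},\widehat{\nabla},\Fil)_{k}$ is the mod-$p$ reduction of the flat connection $\widehat{F}^{*}\widetilde{\nabla}/p$ on $\widehat{F}^{*}\AR_{p}^{\Fil}(\widehat{E})$, where $\widetilde{\nabla}=p\widehat{\nabla}$. On the other hand, Schepler's logarithmic inverse Cartier transform $C^{-1}$ of \cite{Schepler} admits (up to a Frobenius twist) a local description in terms of a Frobenius lift modulo $p^{2}$ compatible with the log-structure, by essentially the same formula. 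The goal is to identify these two local constructions and then check that they glue.

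The first step is to compute $(\AR_{p}^{\Fil}(\widehat{E}),\widetilde{\nabla})$ modulo $p$. The natural maps $\Fil^{i}\to \AR_{p}^{\Fil}(\widehat{E})$, $s\mapsto(s)_{i}$, combined with the relations $p(s)_{i}=(s)_{i-1}$ coming from the inclusions $\Fil^{i}\subset \Fil^{i-1}$, assemble to an isomorphism of $\mathcal{O}_{X_{k}}$-modules
$$\bigoplus_{i\ge 0}\,gr^{i}_{\Fil}\widehat{E}_{k}\xrightarrow{\cong}\AR_{p}^{\Fil}(\widehat{E})/p.$$
Griffiths transversality $\widehat{\nabla}(\Fil^{i})\subset \Fil^{i-1}\otimes \Omega^{1}_{\widehat{X}/W}(\log\widehat{D})$ then shows that $\widetilde{\nabla}$ modulo $p$ matches the associated graded Higgs field $\theta=gr^{\Fil}\widehat{\nabla}_{k}$ (up to a sign depending on the bookkeeping of $p^{-i}$ in $(s)_{i}$); concretely, $\widetilde{\nabla}((s)_{i})=(\widehat{\nabla}(s))_{i-1}$, and its class modulo $p$ only depends on the class of $s$ in $gr^{i}_{\Fil}\widehat{E}_{k}$ and equals the class of $\widehat{\nabla}(s)$ in $gr^{i-1}_{\Fil}\widehat{E}_{k}\otimes\Omega^{1}$.

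Next, I would argue that the operation $(V,\nabla')\mapsto \widehat{F}^{*}(V,\nabla')/p$ sending a $p$-connection to a flat connection becomes, after mod-$p$ reduction, precisely the local formula for Schepler's logarithmic inverse Cartier transform; here $(\ast)$ is used, as in \cite[Proof~10.7]{EV92}, to ensure that $\widehat{F}^{*}\omega/p$ is integral for $\omega\in\Omega^{1}(\log\widehat{D})$. The gluing cocycle $\varepsilon_{\widehat{G}\widehat{F}}$ of \eqref{eqn:epsilon} must then be matched modulo $p$ with the cocycle Schepler uses to compare different local lifts of Frobenius. Since both compare isomorphisms of locally free sheaves, equality may be checked on the dense open $\widehat{X}\setminus \widehat{D}$, where \eqref{eqn:epsilon} reduces to the classical Ogus-Vologodsky Taylor cocycle, and is extended across $\widehat{D}$ by continuity.

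Putting these two steps together yields $\Phi(\widehat{E},\widehat{\nabla},\Fil)_{k}\cong C^{-1}(\mathcal{H},\pm\theta)$, which settles the first assertion. The (semi-)stability statement then follows from the well-known fact that, for $w<p$, the logarithmic inverse Cartier transform $C^{-1}$ preserves (semi-)stability: the correspondence intertwines Higgs sub-sheaves of $(\mathcal{H},\theta)$ with sub-flat-connections of $\Phi(\widehat{E},\widehat{\nabla},\Fil)_{k}$, and ranks and degrees are preserved since $C^{-1}$ is an isomorphism of underlying $\mathcal{O}_{X_{k}}$-modules up to Frobenius twist. The main obstacle I foresee is the clean match of the $\varepsilon$-cocycle of \eqref{eqn:epsilon} modulo $p$ with Schepler's gluing data for arbitrary Frobenius lift pairs $(\widehat{F},\widehat{G})$; this is essentially the bookkeeping in \cite[p.~49]{Faltings} together with \cite[Equation~(13.5.6)]{Xu} referenced after the definition of $\varepsilon$.
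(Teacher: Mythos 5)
Your first step — identifying $\AR_{p}^{\Fil}(\widehat E)/p$ with $(\mathcal H,\theta)$ via Griffiths transversality — is exactly the paper's Claim \ref{claim:higgsmodp}, and your overall skeleton (local computation plus a cocycle comparison) is also the paper's. However, the core of the argument, namely matching the local connection $\widehat F^{*}\widetilde\nabla/p$ and the gluing data $\varepsilon_{\widehat G\widehat F}$ mod $p$ with the gluing data of a concretely described inverse Cartier functor, is precisely the step you defer (``the main obstacle I foresee''), and the references you lean on do not supply it: Faltings \cite{Faltings} and \cite[Equation~(13.5.6)]{Xu} give the formula for $\varepsilon$ and its cocycle property, not its comparison with an inverse Cartier transform. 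Your appeal to Schepler \cite{Schepler} is also a mismatch: his logarithmic Cartier transform is constructed via indexed Azumaya algebras, not by an explicit local Frobenius-lift ``exponential'' formula, so the claimed local description ``by essentially the same formula'' is not available off the shelf; and the restriction-to-$X\setminus D$ trick only reduces the question to the non-logarithmic comparison, which is itself the nontrivial content. The paper closes this gap by using Lan--Sheng--Zuo's exponential twisting $C^{-1}_{\rm exp}$, which equals the Ogus--Vologodsky $C^{-1}$ of \cite[Theorem~2.8]{OV07} by \cite[Section~3]{LSZ15}; under $(\ast)$ the local data $(h_{\alpha\beta},\xi_\alpha)$ of \cite[Lemma~2.1]{LSZ15} are computed explicitly in \cite[p.~110]{EV92} to be $h_{\alpha\beta}=(\widehat F-\widehat G)/p$ and $\xi_\alpha=\widehat F/p$, so the local bundles and connections agree on the nose, and the remaining discrepancy between the two cocycles — the reduction of \eqref{eqn:epsilon} sums over $|\underline i|\le i$, whereas the LSZ cocycle sums over $|\underline i|\le p-1$ — is reconciled because $\theta$ lowers the grading and the width satisfies $w\le r<p$, so both sums truncate at the same place. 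This is exactly where the hypothesis $w<p$ enters the first assertion; your proposal never uses it there, which is a symptom of the missing step.

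Two smaller points. For the (semi-)stability statement the paper simply invokes \cite[Corollary~5.10]{Lan14} (adapted to log poles with nilpotent residues); your direct sketch is fine in spirit, but degrees are not preserved by $C^{-1}$ — the Frobenius twist multiplies them by $p$ — and the correct statement is that slopes of all subobjects scale uniformly by $p$, which is why (semi-)stability transfers. Also note that the comparison you need is with the specific $C^{-1}$ named in the statement (the Ogus--Vologodsky one, log-adapted), so even if you complete the match with Schepler's functor you would still owe a comparison of Schepler's transform with that $C^{-1}$, a step the paper avoids by working with $C^{-1}_{\rm exp}$ throughout.
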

\begin{proof}
We refer to \cite[Corollary~5.10]{Lan14} for the second part. 
Strictly speaking, {\it loc. cit.} consider Higgs bundles and connections without poles, but the proof with log poles and nilpotent residues is the same, mutatis mutandis.
We now address the first part. We first prove the following claims. 

\begin{claim} \label{claim:higgsmodp}
 ${\rm AR}_p (\widehat E, \widehat 
 \nabla, \Fil)/p$ is the Higgs bundle $(\mathcal H, \theta)$. 

 \end{claim}
\begin{proof}

It holds
\ga{}{ \tilde E/p \tilde E= \bigoplus_{i\in \N} (\Fil^i /  \Fil^{i+1})_k   \notag  }
where in this sum over $\BN$ the terms $i > w$ die.
So 
\ga{}{  \tilde E/p \tilde E=\oplus_{i=0}^{w}  gr_i^{\Fil} E_k. \notag}
 Griffiths's transversality
$$\widehat \nabla: \Fil^i\otimes p^{-i}\to \Omega^1_{\widehat X} (\log \widehat D) \otimes \Fil^{i-1}\otimes p^{-i}$$
holds, so
\ga{}{p \widehat \nabla: \Fil^i\otimes p^{-i}\to \Omega^1_{\widehat X}(\log \widehat D) \otimes \Fil^{i-1} \otimes p^{-i+1} .\notag}
This yields
\ga{}{p \widehat \nabla|_{{ gr}_i^{\Fil} E_k  }:  { gr}_i^{\Fil} E_k \to \Omega^1_{X_k} (\log D_k) \otimes gr_{i-1}^{\Fil} E_k \notag}
and thus
\ga{}{ p \widehat \nabla \ {\rm on} \ { gr}_i^{\Fil} E_k = \theta .\notag}
\end{proof}

On the other hand, we have the following claim. 
\begin{claim} \label{claim:modp}
The mod $p$ reduction of  formula~\eqref{eqn:epsilon}  reads

\begin{gather} \label{eqn:epsmodp}
 \varepsilon_{\widehat{G}\tilde{F}}\big((s)_i \otimes 1\big)= \sum_{|\underline{i}| \leq i} \big(\partial^{\underline{i}}(s)\big)_{i-|i|} \otimes \frac{\prod_{j=1}^d(\widehat{F}_{j}-\widehat{G}_{j})^{i_j}}{p^{|\underline{i}|}\underline{i}!}.
\end{gather}

\end{claim}
\begin{proof}
This is just because  the last summand in \eqref{eqn:epsilon}  is divisible by $p$ as  $|\underline{i}|> i$.
\end{proof}
We now finish the proof of Proposition~\ref{prop:PhiC}. 
Under $(\ast)$ 
  $(h_{\alpha \beta}, \xi_\alpha)$ from \cite[Lemma~2.1]{LSZ15} 
is explicitly written in 
\cite[p.110]{EV92} and yields 
\ga{}{ h_{\alpha \beta}=\frac{(\widehat{F}-\widehat{ G})}{p} \notag}
viewed as a  linear map from $\Omega^1_{X}\to \mathcal{ O}_{X}$ (it is naturally defined as a derivation from $\mathcal{O}_{X}\to \mathcal{O}_{X}$ )
 on the two by two intersections of opens, and
 \ga{}{   \xi_\alpha=\frac{\widehat{F}}{p},  \ \xi_\beta=\frac{\widehat{G}}{p}. \notag }

\medskip

In the sequel we use the notation $C^{-1}$ for the Cartier inverse used in \cite[Theorem~2.8]{OV07} and $C^{-1}_{\rm exp}$ for the Cartier inverse defined in \cite[2.2]{LSZ15}. 
By \cite[Section~3]{LSZ15}, $C^{-1}=C_{\rm exp}^{-1}$ so we replace $C$ by $C_{\rm exp}$. By \cite[2.2]{LSZ15},
$ C_{\rm exp}^{-1} (\mathcal{H}, \theta)$
 has the same local underlying vector bundle as $\Phi(\widehat E, \widehat \nabla, {\rm Fil})$   
 and the same local connection by condition (a). The only difference between the  glueing condition  \eqref{eqn:epsmodp} with the glueing condition
  \ga{}{ \sum_{i=0}^{p-1} (\frac{\widehat{F}}{p}- \frac{\widehat{G}}{p})^i(F^*\theta)/i! \notag}
    in \cite[2.2]{LSZ15} is that in the former the summation is over $|\underline{i}|\le i$ while in the latter it is over $|i|\le p-1$.  As already mentioned, $w\le r<p$. In particular, as $\theta$ shifts ${gr}^{\Fil}_i  E_k$ to ${gr}^{\Fil}_{i-1} E_k$,  the summation stops at $|i| \le i$ on $(s_i)\otimes 1$.  This finishes the proof.
\end{proof}

\begin{lemma}\label{lemma:Phi_residues}
The flow functor $\Phi$ sends a filtered flat connection  $(\widehat E, \widehat \nabla,\Fil)$ with nilpotent residues $\res_i \nabla$ to a flat connection with nilpotent residues.
\end{lemma}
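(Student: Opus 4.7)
The plan is to argue \'etale locally and to split $\Phi$ into its two constituent operations: the Artin--Rees construction $\AR_p$ producing a $p$-connection, followed by the $\tfrac{1}{p}$-twisted Frobenius pullback. Since nilpotency of residues can be verified on a sufficiently small affine open equipped with coordinates $(z_1,\ldots,z_d)$ and a Frobenius lift $\widehat{F}$ satisfying $(\ast)$, and since the gluing cocycle \eqref{eqn:epsilon} is an isomorphism of flat log-connections, it suffices to work in this local setting. Fix a boundary component $\widehat{D}_i=\{z_i=0\}$ and set $A = \res_i\widehat{\nabla}$; by hypothesis $A^N=0$ for some $N\geq 1$.

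First I would compute the residue of the $p$-connection $\widetilde{\nabla}=p\widehat{\nabla}$ on the Artin--Rees sheaf $\widetilde{E}=\sum_j \Fil^j \otimes p^{-j}$. For $s \in \Fil^j$, Griffiths transversality gives $\widehat{\nabla}(s) \in \Fil^{j-1}\otimes \Omega^1(\log\widehat{D})$, and reading off the coefficient of $\tfrac{dz_i}{z_i}$ forces $A(s)\in \Fil^{j-1}|_{\widehat{D}_i}$. A direct unwinding of $\widetilde{\nabla}((s)_j) = p^{1-j}\widehat{\nabla}(s)$ yields the shift-of-filtration formula
\[
\res_i\widetilde{\nabla}\bigl((s)_j\bigr) = \bigl(A(s)\bigr)_{j-1}.
\]
Iterating gives $(\res_i\widetilde{\nabla})^n((s)_j) = (A^n(s))_{j-n}$, so $A^N=0$ forces $(\res_i\widetilde{\nabla})^N=0$ on $\widetilde{E}|_{\widehat{D}_i}$. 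This step is the main obstacle: one has to become comfortable with the explicit local description of $\widetilde{E}$ and recognise that Griffiths transversality transfers nilpotency through the filtration shift, but once this formula is in hand the conclusion is immediate.

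Second, I would use $(\ast)$ to identify $\res_i\Phi(\widehat{\nabla})$ with the Frobenius pullback of $\res_i\widetilde{\nabla}$. Since $(\ast)$ yields $\widehat{F}^*(z_i)=z_i^p u$ with $u\equiv 1\pmod p$, the pullback satisfies $\widehat{F}^*(\Omega^1(\log\widehat{D}))\subseteq p\,\Omega^1(\log\widehat{D})$, and moreover $\tfrac{1}{p}\widehat{F}^*(\tfrac{dz_k}{z_k})=\delta_{ik}\tfrac{dz_i}{z_i}+(\text{log form regular along }\widehat{D}_i)$, while $\tfrac{1}{p}\widehat{F}^*(dz_l)$ is everywhere regular. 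Consequently only the $\tfrac{dz_i}{z_i}$-component of $\widetilde{\nabla}$ contributes to the $\tfrac{dz_i}{z_i}$-component of $\Phi(\widehat{\nabla})=\tfrac{\id\otimes \widehat{F}^*}{p}\widetilde{\nabla}$, giving $\res_i\Phi(\widehat{\nabla}) = \widehat{F}^*(\res_i\widetilde{\nabla})$. Since Frobenius pullback of a nilpotent endomorphism is nilpotent, the lemma follows in direct analogy with the proof of Lemma \ref{lemma:F_residues}.
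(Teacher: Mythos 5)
Your proposal is correct and follows essentially the same route as the paper, whose proof is a one-line sketch of exactly this argument: work in the local description, note that the Artin--Rees construction preserves nilpotency of the residues (your filtration-shift formula $(s)_j\mapsto (A(s))_{j-1}$ makes this precise via Griffiths transversality), and note that pulling back along a divisor-compatible local Frobenius lift, divided by $p$, again preserves nilpotency. Your write-up simply supplies the details the paper leaves implicit, and your use of the standing assumption $(\ast)$ to control $\tfrac{1}{p}\widehat{F}^*$ on logarithmic forms is consistent with how the paper uses it.
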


\begin{proof}
This follows from the local description, the Artin-Rees construction preserves nilpotency of the residues, and likewise for pulling back along a local Frobenius lift.    
\end{proof}

\subsection{The quasi-unipotent setting}

This subsection contains a (mild) generalisation of the logarithmic analogue of Xu's inverse Cartier operator. Previously, we operated in the setting of logarithmic flat connections on a smooth variety $\bar{X}$ with poles along a normal-crossings divisor $D \subset \bar{D}$, such that the residues $\res_i \nabla$ along the irreducible components $D_i \subset D$ are nilpotent.
We will extend this to the case where $\bar{X}$ is replaced by a root stack (see Definition \ref{defi:root-stack}), which over the complex numbers corresponds to working with local systems with quasi-unipotent monodromy at infinity. 

The root stack approach is well-known, and is equivalent to working with parabolic flat connections with rational weights. However, we will not utilise the parabolic perspective in this paper.

{ The following invertibility condition on $n$, as spelled out in the following lemma below, will be imposed frequently.} In practice this amounts to $n$ being coprime to the residual characteristic $p$.  Nonetheless, we emphasise the role played by root stacks violating this assumption in the proof of Lemma~ \ref{lemma:Shiho}. 

\begin{lemma}
    If $n$ is invertible on $\bar{X}$, then $\bar{\mathcal{X}}_{D,n}$ is a smooth Deligne-Mumford stack endowed with a natural morphism to $\bar{X}$ which is an isomorphism away from $D$.
\end{lemma}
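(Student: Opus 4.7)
The plan is to assemble the statement from the explicit quotient-stack description of $\bar{\mathcal{X}}_{D,n}$ furnished by the paper, together with the well-known étaleness of $\mu_n$ when $n$ is invertible.

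First, smoothness is already available: Lemma~\ref{lemma:smooth} shows that, when the irreducible components $D_i$ are smooth over $W$, the root stack $\bar{\mathcal{X}}_{D,n}$ is a smooth algebraic stack, étale-locally of the form $[\mathbb{A}^1/\mu_n]^m \times \mathbb{A}^{d-m}$. To upgrade ``smooth algebraic'' to ``smooth Deligne--Mumford'' I would invoke the étaleness of the group scheme $\mu_n$ under the hypothesis that $n$ is invertible on $\bar{X}$. Indeed, the discriminant of $\mathbb{Z}[t]/(t^n-1)$ is a divisor of a power of $n$, so $\mu_n$ is finite étale over $\bar{X}$. Consequently the stabilisers of $\bar{\mathcal{X}}_{D,n}$, which are subgroup schemes of $\mu_n^m$, are étale, so the stack is Deligne--Mumford (equivalently, the diagonal is unramified).

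Next, the natural morphism $\pi\colon \bar{\mathcal{X}}_{D,n} \to \bar{X}$ is the first projection of the fibre product $\bar{X} \times_{\Theta^m, [n]} \Theta^m$. In the moduli-theoretic language recalled in the paper, this sends a tuple $(M, t, f, \phi)$ (an $n$-th root of the pair classifying $D$) to the underlying morphism $f\colon S \to \bar{X}$.

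Finally, I would verify the isomorphism away from $D$ by a direct computation on $\Theta$. The open complement of $B\mathbb{G}_m \hookrightarrow \Theta = [\mathbb{A}^1/\mathbb{G}_m]$ is the open substack $U=[\mathbb{G}_m/\mathbb{G}_m]$, which is canonically isomorphic to $\Spec \mathbb{Z}$: on $U$ a pair $(L,s)$ has $s$ invertible, so it is uniquely (up to unique isomorphism) trivialised as $(\mathcal{O},1)$. The morphism $[n]\colon \Theta \to \Theta$ sends $(L,s)$ to $(L^n, s^n)$, hence preserves $U$ and restricts to the identity on it. Therefore the base change
\[
\bar{X} \times_{\Theta^m,[n]} \Theta^m \;\to\; \bar{X}
\]
is an isomorphism over the open $c_D^{-1}(U^m) = \bar{X}\setminus D$, which is the desired statement.

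The only substantive point is the étaleness of $\mu_n$ under invertibility of $n$; everything else is formal from Lemma~\ref{lemma:smooth} and the universal property of $\Theta$. In particular, no new technical obstacle arises.
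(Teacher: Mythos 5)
Your argument is correct, but it takes a different route from the paper: the paper disposes of this lemma by citing Cadman's Theorem~2.3.3 (which establishes exactly that, for $n$ invertible, the root stack is a smooth Deligne--Mumford stack mapping isomorphically to $\bar{X}$ away from $D$), adding only the remark that smoothness was already checked without the invertibility hypothesis in Lemma~\ref{lemma:smooth}. You instead reprove the statement directly: smoothness from Lemma~\ref{lemma:smooth}, the Deligne--Mumford property from \'etaleness of $\mu_n$ when $n$ is invertible, and the isomorphism away from $D$ from the observation that $[n]\colon\Theta\to\Theta$ preserves the open substack $[\Gb_m/\Gb_m]\cong\Spec\Zb$ and restricts to an isomorphism there, so the fibre product is unchanged over $c_D^{-1}(U^m)=\bar{X}\setminus D$. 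This buys self-containedness (no appeal to Cadman beyond the moduli description of $\Theta$ already recalled in the paper), at the cost of redoing a standard verification. One small precision: over a general base a subgroup scheme of the finite \'etale group scheme $\mu_n^m$ is unramified but need not be flat, hence need not be \'etale; what your argument actually needs is only that the automorphism group schemes are unramified, which makes the diagonal unramified and hence the smooth algebraic stack Deligne--Mumford (and on each stratum the stabiliser is in fact a product of copies of $\mu_n$, so \'etaleness does hold there). With that phrasing adjusted, no gap remains.
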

\begin{proof}
This is the content of Theorem 2.3.3 in \cite{cadman}. We remark that we verified smoothness without invertibility condition on $n$ in Lemma \ref{lemma:smooth}.
\end{proof}

Recall that we fixed a smooth scheme $X/W$ with a good compactification denoted $\bar{X} / W$ and boundary divisor $D/W$ (with strict relative normal crossings).

\begin{definition} Let us fix a positive integer $n$, which is coprime to $p$.
\begin{enumerate}
    \item[(a)] We denote by $\Dc \subset \bar{\Xc}_{D,n}$ the normal crossings divisor given by the pre-image of $D \subset \bar{X}$.
    \item[(b)] The stack of families of logarithmic flat connections of rank $r$
    on the root stack $\bar{\Xc}_{D,n} / W$ with nilpotent residues along $\mathcal{D}$ will be denoted by $\Mc_{dR} / W$. 
 \end{enumerate}
\end{definition}



\begin{remark}\label{rmk:linear}
    The stack $\Mc_{dR}$ is an ascending union of open substacks which are linear quotients (see Definition \ref{defi:linear-quotient}). To see this, one uses that the forgetful map $\Mc_{dR} \to \Bun_r(\bar{\Xc}_{D,n})$ is representable. It is well-known that $\Bun_r(\bar{\Xc}_{D,n})$ can be exhausted by linear quotient stacks, see for example \cite[Proposition 4.1.3]{Wang}. The assumption of \emph{loc. cit.} that the base scheme is a field is not needed for the proof of Proposition~4.1.3 in {\it loc. cit.}
\end{remark}

\subsection{The quasi-unipotent Higgs-de Rham flow}\label{sub:quasi}

The flow functor $\Phi$ introduced in Section~\ref{sec:Phi}
allows us to state the definition of the quasi-logarithmic Higgs-de Rham flow.

According to Lemma \ref{lemma:Phi_residues} the following correspondence is well-defined.

\begin{definition}[HdR flow]
The Higgs-de Rham flow (abbreviated as HdR flow) is defined to be the correspondence $\Phi$ given by the graph $\Gamma_{\Phi}\subset (\Mc_{dR}(W)^{\iso})^2$
$$\big\{ \big((E_0,\nabla_0),(E_1,\nabla_1)\big) \big| \exists \Fil^{\bullet} \in \Fil^{\nabla}(E_0,\nabla_0)\colon \Phi(E_0,\nabla_0,\Fil) \simeq (E_1,\nabla_1)  \big\}.$$
Here, we denote by $\Fil^\nabla(E_0,\nabla_0)$ the set of Griffiths-transverse filtrations on $E_0$.
\end{definition}
If there is a filtration $\Fil$ such that $ \Phi(E_0,\nabla_0,\Fil) \simeq (E_1,\nabla_1)$ , we simply write 
$(E_1,\nabla_1)\in \Phi((E_0,\nabla_0))$.

We emphasise that our definition is inspired by the work of Lan--Sheng--Zuo \cite{LSZ}, where a slightly different but equivalent approach was taken to define the flow.
\begin{definition}
An element $(E,\nabla)$ of $\Mc_{dR}(W)^{\iso}$ is called \emph{$f$-periodic}, where $f > 0$ is an integer, if there exists a sequence $(E_i,\nabla_i)_{i=0,\dots,f}$ such that $$\big((E_i,\nabla_i),(E_{i+1},\nabla_{i+1})\big)\in \Gamma_{\Phi}\\
\ {\rm and} \  (E_0,\nabla_0) = (E_f,\nabla_f).$$
\end{definition}

\subsection{The Higgs-de Rham flow models and the Frobenius pullback}

The Higgs-de Rham flow provides an integral model of the Frobenius pullback for flat connections, as discussed below.

In the following we denote by $\Mc_{dR}(K)_W$ the image of the map $\Mc_{dR}(W)$ in $ \Mc_{dR}(K)$:
 $$ \Mc_{dR}(W)  \twoheadrightarrow \Mc_{dR}(K)_W \subset \Mc_{dR}(K).$$

\begin{proposition}[HdR-Frobenius comparison]\label{prop:Phi-F-comparison}
There is a commutative diagram
 \[
\xymatrix{
\Mc_{dR}(W) \ar[r]^{\Phi} \ar[d] & \Mc_{dR}(W) \ar[d] \\
\Mc_{dR}(K)_W \ar[r]^{F^*} & \Mc_{dR}(K)_W, 
}
\]   
i.e., for $(E_0,\nabla_0)$ and $(E_1,\nabla_1)$ in $\Mc_{dR}(W)$ such that $(E_1,\nabla_1) \in \Phi(E_0,\nabla_0)$,  it holds
$$F^*(E_0,\nabla_0)_K \simeq (E_1,\nabla_1)_K.$$
\end{proposition}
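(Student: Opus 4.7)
The plan is to verify the commutativity by unpacking both functors locally on an affine piece $\widehat U \subset \widehat{\bar\Xc}_{D,n}$ equipped with a Frobenius lift $\widehat F$, and observing that inverting $p$ trivialises the Artin-Rees construction and erases the dependence on the Griffiths-transverse filtration. More precisely, by Definition \ref{dfn:AR}(a) the module $\AR_p^{\Fil^\bullet}(\widehat E_0) = \sum_i \Fil^i \otimes p^{-i}$ sits inside $\widehat E_0 \otimes_W K$ as a $W$-lattice, and the inclusion becomes an isomorphism after $\otimes_W K$. Under this identification, the restricted $p$-connection $\widetilde\nabla$ is the restriction of $p \widehat\nabla_0$, so $(\AR_p^{\Fil^\bullet}(\widehat E_0),\widetilde\nabla)\otimes_W K \cong (\widehat E_0\otimes_W K,\, p\widehat\nabla_0)$ independently of $\Fil$.

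Next, applying Definition \ref{dfn:Phi}(a), one has $\Phi(\widehat E_0,\widehat\nabla_0,\Fil) = \widehat F^*\AR_p^{\Fil^\bullet}(\widehat E_0)$ with connection $\frac{\id\otimes \widehat F^*}{p}\widetilde\nabla$. After inverting $p$, the previous paragraph identifies this with $\widehat F^*(\widehat E_0\otimes_W K)$ carrying the connection $\frac{\id \otimes \widehat F^*}{p}(p\widehat\nabla_0)$, which is precisely the local formula for the Frobenius pullback of $(\widehat E_0,\widehat\nabla_0)$ of Section~3.1. Put categorically, this is the factorisation $F^* = C^{-1}\circ [p]$ of Definition~\ref{defi:Cartier}(d) applied to the $p$-connection $(\widehat E_0\otimes_W K, p\widehat\nabla_0)$.

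It remains to check that the gluing cocycles match. Fix two Frobenius lifts $\widehat F,\widehat G$ on $\widehat U$ and substitute the identification $(s)_i \leftrightarrow s\otimes p^{-i}$ in $\widehat E_0\otimes_W K$ into the cocycle $\varepsilon_{\widehat G\widehat F}$ of equation \eqref{eqn:epsilon}. A direct computation shows that in both sums the $p$-powers combine as
$$p^{-(i-|\underline i|)}\cdot p^{-|\underline i|} = p^{-i} = p^{|\underline i| - i}\cdot p^{-|\underline i|},$$
so every term contributes $\partial^{\underline i}(s)\otimes p^{-i}\prod_j(\widehat F_j - \widehat G_j)^{i_j}/\underline i!$ regardless of whether $|\underline i|\leq i$ or $|\underline i| > i$. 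This equals $p^{-i}\,\psi_{\widehat G\widehat F}(s\otimes 1)$ with $\psi$ from equation \eqref{eqn:psi}, which is exactly the cocycle attaching the local Frobenius pullbacks of $(\widehat E_0,\widehat\nabla_0)$ evaluated on $s\otimes p^{-i}$.

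Consequently, the locally defined isomorphisms $\Phi(\widehat E_0,\widehat\nabla_0,\Fil)_K \cong \widehat F^*(\widehat E_0,\widehat\nabla_0)_K$ glue to a global isomorphism $\Phi(E_0,\nabla_0,\Fil)_K \cong F^*(E_0,\nabla_0)_K$, giving the desired commutativity for any Griffiths-transverse filtration $\Fil$ witnessing $(E_1,\nabla_1)\in \Phi(E_0,\nabla_0)$. The main technical obstacle is the third step: verifying by direct bookkeeping that the two pieces of $\varepsilon_{\widehat G\widehat F}$ (corresponding to $|\underline i|\leq i$ and $|\underline i|>i$) uniformly recover $p^{-i}\psi_{\widehat G\widehat F}$ after base change to $K$; the remaining steps are formal consequences of the definitions.
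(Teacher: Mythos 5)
Your proposal is correct and follows essentially the same route as the paper, whose proof consists precisely of the observation that the Taylor cocycle \eqref{eqn:psi} defining $F^*$ and Faltings's Taylor formula \eqref{eqn:epsilon} defining $\Phi$ become equivalent upon inverting $p$; your explicit bookkeeping with $(s)_i \leftrightarrow s\otimes p^{-i}$, showing every term reduces to $p^{-i}\psi_{\widehat G\widehat F}(s\otimes 1)$, is exactly the computation the paper leaves implicit.
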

\begin{proof}
This follows directly by comparing the Taylor formula \eqref{eqn:psi} used to define $F^*$ to Faltings's Taylor formula \eqref{eqn:epsilon} appearing in the definition of $\Phi$. The two formulae are equivalent upon inverting $p$.
\end{proof}



\begin{corollary}\label{cor:W-model}
 Assume that $(E,\nabla)_K \in \Mc_{dR}(K)_W$ is an $F$-isocrystal of period $f > 0$, that is
 $(F^*)^f(E,\nabla)_K \simeq (E,\nabla)_K.$ Furthermore, suppose that for every $0 \leq i \leq f-1$ there exists a stable $W$-model of $(F^*)^i(E,\nabla)_K$ endowed with a Griffiths-transverse filtration (also defined over $W$). Then, $(E,\nabla)_K$ possesses an $f$-periodic $W$-model.
\end{corollary}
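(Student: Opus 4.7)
The plan is to build the $f$-periodic sequence $(E_0,\nabla_0),\dots,(E_{f-1},\nabla_{f-1})$ iteratively by repeated application of $\Phi$, starting from the stable $W$-model with Griffiths-transverse filtration of $(E,\nabla)_K$ supplied by hypothesis. The hypothesized stable $W$-models of the remaining $(F^*)^i(E,\nabla)_K$ will serve as a source of the Griffiths-transverse filtrations needed at each subsequent step.

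First, I set $(E_0,\nabla_0,\Fil_0)$ to be the given stable $W$-model of $(E,\nabla)_K$ with its Griffiths-transverse filtration, and I define inductively $(E_{i+1},\nabla_{i+1}) := \Phi(E_i,\nabla_i,\Fil_i)$ for $i=0,\dots,f-1$. By Proposition~\ref{prop:Phi-F-comparison},
$$(E_{i+1},\nabla_{i+1})_K \simeq F^*(E_i,\nabla_i)_K \simeq (F^*)^{i+1}(E,\nabla)_K,$$
so $(E_{i+1},\nabla_{i+1})$ is a $W$-model of $(F^*)^{i+1}(E,\nabla)_K$ (indices taken mod $f$, using $(F^*)^f(E,\nabla)_K\simeq (E,\nabla)_K$).

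To continue the iteration, I need to endow each $(E_{i+1},\nabla_{i+1})$ with a Griffiths-transverse filtration; I will obtain $\Fil_{i+1}$ by transporting the filtration from the hypothesized stable $W$-model of $(F^*)^{i+1}(E,\nabla)_K$ along an isomorphism over $W$. After $f$ steps, $(E_f,\nabla_f)$ is a $W$-model of $(E,\nabla)_K$; identifying it with $(E_0,\nabla_0)$ via a chosen $W$-isomorphism closes up the orbit and produces the desired $f$-periodic $W$-model.

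The hard part will be ensuring, at each step, the existence of such a $W$-isomorphism between two stable $W$-models of the same $K$-connection, so that the filtration can be pulled back and so that the $f$-th iterate can be identified with the starting point. I expect this uniqueness statement to follow from stability: the reduction $\Phi(E_i,\nabla_i,\Fil_i)_k$ is identified by Proposition~\ref{prop:PhiC} with the slope-stable Higgs bundle $C^{-1}(\mathcal{H}_i,\theta_i)$, whose automorphism group is reduced to scalars; combined with an $\Oc$-lattice comparison inside the common $K$-connection and Nakayama's lemma, this rigidity should pin down the stable $W$-model up to isomorphism, allowing the filtrations to be transported and the loop to close.
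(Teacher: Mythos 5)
Your overall route is the same as the paper's: feed the hypothesised stable $W$-models, with their Griffiths-transverse filtrations, into $\Phi$, use Proposition \ref{prop:Phi-F-comparison} to identify the generic fibre of each output with the next Frobenius pullback, and close the loop by a uniqueness statement for stable $W$-models of a given $K$-connection. The one point of divergence is exactly the step you flag as hard: the paper settles it in one line by remarking that uniqueness of a stable $W$-model is tantamount to separatedness of the moduli space of stable flat log-connections (a valuative-criterion argument), whereas your sketch goes through Proposition \ref{prop:PhiC} plus a lattice comparison. That sketch has two soft spots. First, Proposition \ref{prop:PhiC} gives stability of $\Phi(E_i,\nabla_i,\Fil_i)_k$ only under slope-stability of the associated graded Higgs bundle $(\mathcal{H}_i,\theta_i)$, which is not among the hypotheses of this corollary (it enters only in the second part of the main theorem and in the final corollary), so you would be smuggling in an extra assumption. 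Second, even granting stability of both special fibres, comparing two $W$-lattices with connection inside the common $K$-connection and concluding they are isomorphic is precisely the separatedness statement; ``lattice comparison and Nakayama'' does not by itself produce it. If you replace your rigidity sketch by the appeal to separatedness of the stable moduli space, your argument coincides with the paper's proof.
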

\begin{proof}
A stable $W$-model is unique if it exists,  this assertion is  tantamount to separatedness of the moduli space of stable flat log-connections. We can therefore consider the sequence of stable models $(E_i,\nabla_i)_W$ and use that by Proposition \ref{prop:Phi-F-comparison} it must be $f$-periodic. We conclude that $(E_0,\nabla_0)_W$ yields an $f$-periodic HdR-flow.
\end{proof}

\section{A topological proof of the $F$-isocrystal property} \label{sec:topproof}

As in Subsection \ref{sub:quasi} we fix $X/W$ with good compactification $\bar{X} / W$ and boundary divisor $D/W$ (with normal crossings).
We emphasise that we do not impose any stability conditions and thus consider the stack of all families of logarithmic flat connections on vector bundles of rank $r$  and zero rational Chern classes on the root stack $\Xc_{D,n}$.

The $W$-stack $\Mc_{dR}$ has the curious property that its base change to $K$ is of finite type, whereas the special fibre over $k$ is only locally of finite type and not quasi-compact. Indeed, the existence of an integrable connection on a locally free sheaf in characteristic $p>0$ does not imply that its numerical Chern classes vanish.

However, by bounding Harder--Narasimhan types $\eta$ of the underlying vector bundle one can exhaust $\Mc_{dR}$ by open substacks $\Mc_{dR}^{\eta}$, which are of finite type and are also linear quotient stacks (see Remark \ref{rmk:linear}).

\begin{theorem}
The map 
$F^*\colon \Mc_{dR}(W)^{\iso} \to \Mc_{dR}(W)^{\iso}$ is an open embedding.
\end{theorem}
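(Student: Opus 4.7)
The plan is to show $F^*$ is (i) injective, (ii) continuous, (iii) has a clopen image, and (iv) admits a continuous inverse on that image. Taken together, these four assertions exhibit $F^*$ as a homeomorphism of $\Mc_{dR}(W)^{\iso}$ onto a clopen subset of itself, which is in particular an open embedding.

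For injectivity I would use the factorisation $F^* = C^{-1}\circ [p]$ from Definition~\ref{defi:Cartier}(d). By Proposition~\ref{prop:log-Xu} the Cartier transform $C^{-1}$ is fully faithful, and the functor $[p]\colon \Conn(\widehat{\Xc},\widehat{\Dc})\to \pConn(\widehat{\Xc},\widehat{\Dc})_0$ is an equivalence whose inverse divides the $p$-connection form by $p$; this is well defined because objects of $\pConn_0$ have connection form vanishing modulo $p$, and the resulting $\nabla$ satisfies the ordinary Leibniz rule. For continuity, I would invoke Corollary~\ref{cor:continuous} and check continuity of the composites $\Mc_{dR}(W)^{\iso}\to \Mc_{dR}(W_n)^{\iso}$ for each $n$. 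Modulo $p^n$, the Taylor formula \eqref{eqn:psi} truncates to a polynomial construction which, once restricted to the Harder--Narasimhan-bounded open substacks of Remark~\ref{rmk:linear}, descends to a genuine morphism of $W_n$-stacks; the induced map on isomorphism classes of $W_n$-points is continuous by Proposition~\ref{prop:quotient-stack}.

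The core of the argument is identifying the image. A local computation using a Frobenius lift $\widehat F$ satisfying condition $(\ast)$ gives $\res_i(F^*\nabla) = p\cdot \widehat F^*\res_i\nabla$, so Frobenius pullbacks have $p$-divisible residues. Conversely, given $(E,\nabla) \in \Mc_{dR}(W)$ with $p$-divisible residues, Proposition~\ref{prop:log-Xu} yields $(V,\nabla') \in \pConn_0$ with $C^{-1}(V,\nabla')\simeq (E,\nabla)$; essential surjectivity of $[p]$ on $\pConn_0$ then produces $(E_0,\nabla_0)$ with $F^*(E_0,\nabla_0)\simeq (E,\nabla)$, and nilpotence of $\res_i \nabla_0$ follows from that of $p\cdot \res_i\nabla_0$ by torsion-freeness of the endomorphism ring. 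Hence the image of $F^*$ is precisely the subset $\Mc_{dR}(W)^{\iso}_{p\text{-div}}$ of classes with $p$-divisible residues. This subset is clopen in the $p$-adic topology, because $p$-divisibility of a residue endomorphism is detected by reduction modulo $p^2$, cutting out a basic clopen set by Lemma~\ref{lemma:basis}.

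Continuity of the set-theoretic inverse on the image follows from applying the same reduction-mod-$p^n$ argument to the inverse Cartier equivalence $(C^{-1})^{-1}$ (available on the $p$-divisible-residue subcategory) and to the division-by-$p$ functor on $\pConn_0$. The main obstacle is step (iii): matching the essential image of $F^*$ with the algebraically natural condition of $p$-divisibility of residues rests crucially on the logarithmic lift of Xu's inverse Cartier transform (Proposition~\ref{prop:log-Xu}) and on the normalisation $(\ast)$ of the local Frobenius lifts, which is what makes the effect of $F^*$ on residues transparent and guarantees that one can reason about the image using only mod-$p^2$ data.
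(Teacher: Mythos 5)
Your skeleton — injectivity via $F^*=C^{-1}\circ[p]$ and Proposition \ref{prop:log-Xu}, continuity via reduction mod $p^n$ and Corollary \ref{cor:continuous}, openness by recognising membership in the image through residue/curvature conditions and inverting with $C$ and division by $p$ — is the same as the paper's, but your step (iii), which you rightly call the core, has a genuine gap. Proposition \ref{prop:log-Xu} concerns the functor $C^{-1}\colon \pConn(\widehat{\Xc},\widehat{\Dc})_0 \to \Conn(\widehat{\Xc},\widehat{\Dc})_0$; its essential image is the subcategory of objects \emph{of} $\Conn(\widehat{\Xc},\widehat{\Dc})_0$ — i.e.\ connections whose special fibre has vanishing $p$-curvature — having $p$-divisible residues. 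You apply it to an arbitrary $(E,\nabla)\in\Mc_{dR}(W)$ with $p$-divisible residues, never imposing or verifying the $p$-curvature condition. This is not cosmetic: any Frobenius pullback has special fibre with zero $p$-curvature, so a $W$-connection whose residues vanish identically but whose special fibre has nonzero $p$-curvature lies in your set $\Mc_{dR}(W)^{\iso}_{p\text{-div}}$ yet is not in the image of $F^*$. Hence the image is strictly smaller than your claimed clopen set, and your proof of openness of the image fails at exactly this point. (A minor slip: $p$-divisibility of the residues is detected already mod $p$, not mod $p^2$, by $p$-torsion-freeness.)

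The repair is to add ``vanishing $p$-curvature of the special fibre'' to the characterisation. Both conditions are then conditions on the reduction mod $p$ alone, so — after restricting to Harder--Narasimhan-bounded open substacks $\Mc^{\eta}_{dR}$, which you need in any case since $\Mc_{dR}$ is not of finite type over $W$ — the image is a preimage of a subset of $\Mc_{dR}(k)^{\iso}$ and hence open by Lemma \ref{lemma:basis}. This corrected argument is essentially the paper's proof of Theorem \ref{thm:open}: there one takes the neighbourhood $\mathcal V$ of $F^*(E_0,\nabla_0)$ consisting of points with the same special fibre, so that vanishing of $p$-curvature and of residues mod $p$ is automatic, and then sets $(E_2,\nabla_2)=\tfrac1p\, C(E_1,\nabla_1)$, with Lemma \ref{lemma:open} guaranteeing that division by $p$ is legitimate; your observation that nilpotence of the residues of the preimage follows by torsion-freeness is correct and is needed there as well. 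Finally, your step (iv) is superfluous: on the compact Hausdorff spaces $\Mc^{\eta}_{dR}(W)^{\iso}$ a continuous injection is automatically a homeomorphism onto its image, which is how the paper concludes without constructing a continuous inverse.
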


The proof of this theorem will occupy the rest of this section.

\subsection{Continuity with respect to the $p$-adic topology}

First we analyse continuity of the Frobenius pullback operation.

\begin{proposition}\label{prop:continuous} The 
 map of sets
$F^*\colon \Mc_{dR}(W)^{\iso} \to \Mc_{dR}(W)^{\iso}$ is $p$-adically continuous.
\end{proposition}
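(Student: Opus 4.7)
The plan is to reduce continuity to the finite levels $W_n$ and exploit discreteness of the corresponding isomorphism-class sets.

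First I would establish that the Frobenius pullback descends to every finite level: for each $n \geq 1$, the Taylor formula \eqref{eqn:psi} defining the gluing data $\psi_{\widehat G \widehat F}$ truncates to a finite sum modulo $p^n$, and therefore defines a functor $F^*_n$ on the category of flat log-connections on $(X_{W_n}/W_n, D_{W_n}/W_n)$ (and analogously on the root stack, by Corollary \ref{cor:Cartier-DM} and Subsection \ref{sub:quasi}), compatibly with further truncation. The key estimate is that $(\widehat F_j - \widehat G_j)^{i_j}/i_j!$ has $p$-adic valuation at least $i_j(p-2)/(p-1)$, which tends to infinity with $i_j$ since $p \geq 3$; only finitely many multi-indices $\underline i$ contribute modulo $p^n$, and the contribution of each depends solely on the mod $p^n$ reduction of the connection and of the local Frobenius lifts.

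Next I would assemble the commutative square
\[
\xymatrix{
\Mc_{dR}(W)^{\iso} \ar[r]^{F^*} \ar[d]_{r_n} & \Mc_{dR}(W)^{\iso} \ar[d]^{r_n} \\
\Mc_{dR}(W_n)^{\iso} \ar[r]^{F^*_n} & \Mc_{dR}(W_n)^{\iso},
}
\]
in which $r_n$ denotes the reduction map, continuous by functoriality of the strong topology. Since $W_n$ is a finite discrete ring, the strong topology on $Y(W_n)$ is discrete for every locally-finite-type $W$-scheme $Y$, and hence $\Mc_{dR}(W_n)^{\iso}$ is a discrete topological space, being a union of discrete sets $\Mc^{\eta}_{dR}(W_n)^{\iso}$ indexed by Harder--Narasimhan types (cf.\ Remark \ref{rmk:linear}). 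In particular $F^*_n$ is automatically continuous, and therefore so is the composite $r_n \circ F^* = F^*_n \circ r_n$.

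To conclude continuity of $F^*$ itself, I would invoke Lemma \ref{lemma:basis} applied to each finite-type open linear-quotient substack $\Mc^{\eta}_{dR}$ in the open cover of $\Mc_{dR}$ (Remark \ref{rmk:linear}): the resulting collection of sets $r_n^{-1}(M)$, for $n \geq 1$ and $M$ any subset of some $\Mc^{\eta}_{dR}(W_n)^{\iso}$, forms a basis for the topology on $\Mc_{dR}(W)^{\iso}$. Commutativity of the square gives $(F^*)^{-1}(r_n^{-1}(M)) = r_n^{-1}((F^*_n)^{-1}(M))$, which is open by continuity of $r_n$ and discreteness of the target. The principal obstacle is the first step, namely verifying carefully that the infinite Taylor sum truncates to a well-defined functor at each finite level; the remaining steps are formal consequences of the topological framework of Section~2.
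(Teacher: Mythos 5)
Your proposal is correct and follows essentially the same route as the paper: one defines the Frobenius pullback $F^*_n$ at each finite level $W_n$ (the paper asserts this for $p\ge 3$, where you supply the truncation estimate), forms the commutative square with the reduction maps, and concludes via the pro-finite description of the $p$-adic topology on $\Mc_{dR}(W)^{\iso}$, keeping track of Harder--Narasimhan bounds so as to work with finite-type linear quotient substacks. Your use of the basis from Lemma \ref{lemma:basis} is just a mild repackaging of the paper's appeal to Corollary \ref{cor:continuous}.
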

\begin{proof}
We fix bounds $\eta_0$ and $\eta_1$ for the Harder--Narasimhan types such that 
$$F^*(\Mc_{dR}^{\eta_0}(W)^{\iso}) \subset \Mc_{dR}^{\eta_1}(W)^{\iso}.$$
 This is possible as the  rational Chern classes
  of a vector bundle on $\widehat{X}$   depend only on $ E_k$, and $c_i(F^*E_k)=p^{i}c_i(E_k)$.  

The Frobenius pullback $F^*$ can also be defined for $W_n$-families of flat connections if $p\ge 3$.  The resulting pullback construction will be denoted by $F_n^*$. We therefore see that there is a continuous diagram
\[
\xymatrix{
\Mc^{\eta_0}_{dR}(W)^{\iso} \ar[r]^{F^*} \ar[d] & \Mc^{\eta_1}_{dR}(W)^{\iso} \ar[d] \\
\varprojlim \Mc^{\eta_0}_{dR}(W_n)^{\iso} \ar[r]^-{\varprojlim F_n^*} & \Mc^{\eta_1}_{dR}(W_n)^{\iso}
}
\]
Continuity of $F^*$ with respect to the $p$-adic topology now follows from Corollary \ref{cor:continuous}.    
\end{proof}

\subsection{Injectivity}

The next step is to show that Frobenius pullback gives rise to an embedding.

\begin{lemma}\label{lemma:injective}
The Frobenius pullback $F^*$ is injective on isomorphism classes of $W$-families of flat connections.
\end{lemma}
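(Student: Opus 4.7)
The plan is to use the factorisation $F^* = C^{-1}\circ[p]$ from Definition \ref{defi:Cartier} together with the full faithfulness of $C^{-1}$ established in Proposition \ref{prop:log-Xu}. Let $(E_1,\nabla_1)$ and $(E_2,\nabla_2)$ be $W$-families in $\Mc_{dR}(W)$ with $F^*(E_1,\nabla_1) \simeq F^*(E_2,\nabla_2)$; the goal is to promote this to an isomorphism between $(E_1,\nabla_1)$ and $(E_2,\nabla_2)$ themselves.

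First I would verify the categorical bookkeeping needed to feed these objects into Proposition \ref{prop:log-Xu}. The $p$-connection $[p](E_i,\nabla_i) = (E_i, p\nabla_i)$ lies in $\pConn(\widehat{\Xc},\widehat{\Dc})_0$ because its special fibre has trivial Higgs field ($p \equiv 0 \pmod p$); dually, $F^*(E_i,\nabla_i) \in \Conn(\widehat{\Xc},\widehat{\Dc})_0$ because a Frobenius pullback has vanishing $p$-curvature on the special fibre, which is the standard Cartier descent computation applied to the local Frobenius lifts used in the construction of $F^*$.

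Once the objects are placed in the correct subcategories, full faithfulness of $C^{-1}$ converts the given isomorphism $C^{-1}[p](E_1,\nabla_1) \simeq C^{-1}[p](E_2,\nabla_2)$ into an isomorphism $\phi\colon (E_1, p\nabla_1) \xrightarrow{\sim} (E_2, p\nabla_2)$ in $\pConn(\widehat{\Xc},\widehat{\Dc})_0$. Concretely, $\phi\colon E_1 \to E_2$ is an $\Oc$-linear isomorphism satisfying $\phi\circ(p\nabla_1) = (p\nabla_2)\circ\phi$, i.e., $p\,(\phi\circ\nabla_1 - \nabla_2\circ\phi) = 0$ as a section of $\Hhom(E_1, E_2\otimes\Omega^1_{\widehat{\Xc}}(\log\widehat{\Dc}))$.

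The final step is to cancel the factor of $p$. Because $E_2$ is locally free, the sheaf $E_2\otimes\Omega^1_{\widehat{\Xc}}(\log\widehat{\Dc})$ is $p$-torsion free, so the displayed identity forces $\phi\circ\nabla_1 = \nabla_2\circ\phi$. Hence $\phi$ is already an isomorphism of flat log-connections $(E_1,\nabla_1) \simeq (E_2,\nabla_2)$, which proves the claim. The only real obstacle is the categorical placement of $F^*(E_i,\nabla_i)$ as $C^{-1}$ of something in $\pConn_0$; once this is in hand, full faithfulness of $C^{-1}$ and the $p$-torsion freeness of locally free sheaves do the rest essentially formally.
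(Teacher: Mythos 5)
Your proposal is correct and follows essentially the same route as the paper: factor $F^*$ as $C^{-1}\circ[p]$, invoke the full faithfulness of $C^{-1}$ from Proposition \ref{prop:log-Xu}, and recover $\nabla$ from $p\nabla$. The only difference is that you spell out the last step (cancelling $p$ via torsion-freeness of the locally free sheaves) which the paper compresses into the remark that $[p]$ is faithful since $\nabla$ can be recovered from $p\nabla$.
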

\begin{proof}
According to Definition \ref{defi:Cartier}(d), there is an isomorphism between $F^*(E,\nabla)$ and $C^{-1}(E,p\nabla)$. That is, $F^* \simeq C^{-1} \circ [p]$, where $[p]$ denotes the functor $(E,\nabla) \mapsto (E,p\nabla)$.

The functor $C^{-1}$ is { fully faithful by}  Proposition \ref{prop:log-Xu} and thus acts injectively on isomorphism classes. Likewise, $[p]$ is faithful, since the connection $\nabla$ can be recovered from $p\nabla$. We therefore infer that the composition $F^*$ acts injectively on isomorphism classes.
\end{proof}

\subsection{Openness of the image}
A crucial property for our argument is established in the following result.

\begin{theorem}\label{thm:open}
The continuous map $F^*\colon \Mc_{dR}(W)^{\iso} \to \Mc_{dR}(W)^{\iso}$ is open with respect to the $p$-adic topology. That is, for every $p$-adically open subset $U \subset \Mc_{dR}(W)^{\iso}$ the image $F^*(U)$ is $p$-adically open.    
\end{theorem}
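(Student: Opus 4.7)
The plan is to combine the two properties of $F^*$ already established --- continuity (Proposition~\ref{prop:continuous}) and injectivity on isomorphism classes (Lemma~\ref{lemma:injective}) --- with an explicit description of its essential image. Since $\Mc_{dR}$ is exhausted by the finite-type linear quotient open substacks $\Mc_{dR}^{\eta}$ of Remark~\ref{rmk:linear}, and $F^*$ maps $\Mc_{dR}^{\eta_0}(W)^{\iso}$ into $\Mc_{dR}^{\eta_1}(W)^{\iso}$ for a suitable $\eta_1 = \eta_1(\eta_0)$ as noted in the proof of Proposition~\ref{prop:continuous}, it suffices to treat each such restricted map. Both source and target are Stone spaces by Lemma~\ref{lemma:Stone-space2}, so a continuous injection between them is automatically a homeomorphism onto its image; the problem is thereby reduced to showing that this image is open in $\Mc_{dR}^{\eta_1}(W)^{\iso}$.

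To describe the image, I would use the factorisation $F^* = C^{-1}\circ [p]$ from Definition~\ref{defi:Cartier}(d). By Proposition~\ref{prop:log-Xu}, the inverse Cartier operator $C^{-1}$ is an equivalence onto the subcategory of flat log-connections whose residues are $p$-divisible. Next I would verify that $[p]\colon \Conn \to \pConn_0$ is an equivalence: the defining condition on $\pConn_0$ --- that the mod $p$ reduction of the $p$-connection $\nabla'$ vanishes as a Higgs field --- is precisely divisibility of $\nabla'$ by $p$ as a $W$-linear operator, so $\nabla'/p$ is a well-defined flat connection mapping to $(V,\nabla')$ under $[p]$. Composing the two equivalences, $F^*$ induces an equivalence from $\Conn$ onto the full subcategory of flat log-connections $(E,\nabla)$ characterised by two properties: the residues $\res_i\nabla$ are $p$-divisible, and the special fibre $(E_k,\nabla_k)$ has zero $p$-curvature.

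Both of these conditions depend only on the mod $p$ reduction of $(E,\nabla)$, so the image of $F^*$ is the preimage of a subset of $\Mc_{dR}^{\eta_1}(k)^{\iso}$ under the continuous reduction map $\Mc_{dR}^{\eta_1}(W)^{\iso} \to \Mc_{dR}^{\eta_1}(k)^{\iso}$. Since $\Mc_{dR}^{\eta_1}$ is of finite type over the finite field $k$, the set $\Mc_{dR}^{\eta_1}(k)^{\iso}$ is finite and discrete, and therefore the preimage of any subset is clopen. Combined with the previous paragraph, this shows that $F^*|_{\Mc_{dR}^{\eta_0}(W)^{\iso}}$ is an open embedding into $\Mc_{dR}(W)^{\iso}$, and taking the union over all $\eta_0$ yields that $F^*$ is an open embedding on the full space.

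The main obstacle is pinpointing the essential image in the second paragraph. The fully faithful half is already packaged as Proposition~\ref{prop:log-Xu}, but the essential surjectivity of $[p]$ requires the observation that the vanishing of a $p$-connection's Higgs field modulo $p$ is equivalent to the $p$-connection being literally $p$ times an honest connection. Once this is in hand, the reduction to a mod $p$ condition converts the openness question into the essentially trivial fact that every subset of a discrete finite set is open.
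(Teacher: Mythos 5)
Your overall route is the same as the paper's: reduce to the Harder--Narasimhan truncations $\Mc_{dR}^{\eta_0}\to\Mc_{dR}^{\eta_1}$, use continuity (Proposition \ref{prop:continuous}), injectivity (Lemma \ref{lemma:injective}) and compactness (Lemma \ref{lemma:Stone-space2}) to get a homeomorphism onto the image, and then exploit the factorisation $F^*=C^{-1}\circ[p]$ together with Proposition \ref{prop:log-Xu} and the pro-finite nature of the topology (conditions depending only on the reduction mod $p$ cut out clopen sets). The paper packages the last step locally -- around each point of the image it takes the neighbourhood $\mathcal V$ of connections with the \emph{same} special fibre as $F^*(E_0,\nabla_0)$ and inverts via the ansatz $(E_2,\nabla_2)=\frac1p C(E_1,\nabla_1)$, with Lemma \ref{lemma:open} guaranteeing divisibility -- whereas you describe the image globally as a clopen preimage.

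The one place where your global formulation overstates is the asserted equality of the image of the \emph{restricted} map $F^*|_{\Mc_{dR}^{\eta_0}(W)^{\iso}}$ with the preimage in $\Mc_{dR}^{\eta_1}(W)^{\iso}$ of the mod-$p$ conditions (residues divisible by $p$, vanishing $p$-curvature of the special fibre). The inclusion you need, namely that every such $(E_1,\nabla_1)$ equals $F^*(E_2,\nabla_2)$ with $(E_2,\nabla_2)$ a point of $\Mc_{dR}^{\eta_0}(W)^{\iso}$, requires two further checks that the mod-$p$ conditions alone do not give: that $(E_2,\nabla_2)=[\frac1p]$ of the $C^{-1}$-preimage again has nilpotent residues (so that it is a point of $\Mc_{dR}$ at all), and, more seriously, that its Harder--Narasimhan type stays within the fixed bound $\eta_0$; Frobenius pullback does not interact with HN filtrations cleanly enough for this to be automatic from $(E_1,\nabla_1)\in\Mc_{dR}^{\eta_1}$. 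The paper's choice of $\mathcal V$ is designed exactly to supply this: if the special fibre of $(E_1,\nabla_1)$ is isomorphic to that of $F^*(E_0,\nabla_0)$, then Cartier descent (zero $p$-curvature) identifies $E_{2,k}$ with $E_{0,k}$ up to Frobenius twist, so the HN bound $\eta_0$ and the residue condition are inherited. Your argument can be repaired either by inserting this local step, or by arguing with the image of the unrestricted $F^*$ (which is indeed a union of clopen sets of your type, granting the residue check) together with a bound, in terms of $\eta_1$, on the HN types of all possible preimages; but as written the openness of $F^*(\Mc_{dR}^{\eta_0}(W)^{\iso})$ is not yet established.
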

\begin{proof}
We fix bounds $\eta_0$ and $\eta_1$ for the Harder--Narasimhan types such that 
$$F^*(\Mc_{dR}^{\eta_0}(W)^{\iso}) \subset \Mc_{dR}^{\eta_1}(W)^{\iso}.$$
 
By openness of $\Mc_{dR}^{\eta_i}\subset \Mc_{dR}$ it suffices to prove that the map 
$F^*|_{\Mc_{dR}^{\eta_0}(W)^{\iso}}$
is a homeomorphism onto its image, and that this image is an open subset of $\Mc_{dR}^{\eta_1}(W)^{\iso}$.

The first property follows from the fact that 
$F^*|_{\Mc_{dR}^{\eta_0}(W)^{\iso}}$
is continuous (see Proposition \ref{prop:continuous}) and also an injective map (see \ref{lemma:injective}) between compact Hausdorff spaces.
It therefore remains to prove that 
$$F^*(\Mc_{dR}^{\eta_0}(W)^{\iso})\subset \Mc_{dR}^{\eta_1}(W)^{\iso}$$
is $p$-adically open.
This amounts to verifying the following assertion.

\begin{cl}
For every $(E_0,\nabla_0) \in \Mc_{dR}^{\eta_0}(W)^{\iso}$ there exists an open neighbourhood $\mathcal V$ of $F^*(E_0,\nabla_0)$ such that for every $(E_1,\nabla_1)$ in $\mathcal V$ there is $(E_2,\nabla_2) \in \Mc_{dR}^{\eta_0}(W)^{\iso}$ with 
$(E_1,\nabla_1) \simeq F^*(E_2,\nabla_2)$.
\end{cl}

By construction, $F^*(E_0,\nabla_0)|_k$ has vanishing $p$-curvature { and residues}. There exists an open neighbourhood $\mathcal V$ such that all $(E_1,\nabla_1) \in \mathcal V$ have isomorphic special fibres, and thus have in particular vanishing $p$-curvature { and residues}.

The identity $F^*(E_0,\nabla_0) \simeq C^{-1}(E_0,p\nabla_0)$ leads us to consider the \emph{ansatz} 
$$(E_2,\nabla_2) = \frac{1}{p}\cdot C(E_1,\nabla_1),$$
that is, to divide the $p$-connection $C(E_1,\nabla_1)$ by $p$ and to apply the inverse equivalence $C$ existing by Proposition \ref{prop:log-Xu} { under the assumptions of vanishing $p$-curvature and residues of the special fibre}. The fact that $(E_2,\nabla_2)$ is well-defined provided that $\mathcal V$ is chosen sufficiently small follows from Lemma \ref{lemma:open} below proving that the map $(E,\nabla) \mapsto (E,p\nabla)$ is open.
\end{proof}    

Below we denote by $\Mc_{p{-}dR}$ the algebraic stack of rank $r$ vector bundles on $\Xc=\Xc_{D,n}$  endowed with a flat $p$-connection $\nabla'$ with nilpotent residues. 

\begin{lemma}\label{lemma:open}
The map $[p]\colon \Mc_{dR}(W)^{\iso} \to \Mc_{p{-}dR}(W)^{\iso}$
induced by multiplying a connection $\nabla$ with $p$ is continuous and open.  
\end{lemma}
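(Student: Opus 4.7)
The plan is to prove the lemma by realising $[p]$ as a continuous injection whose image is open in the target; openness then follows because a continuous bijection between compact Hausdorff spaces is a homeomorphism. Everything should be done after reducing to finite-type open substacks. Since $[p]$ preserves the underlying vector bundle, it maps each $\Mc^\eta_{dR}$ into the analogously defined $\Mc^\eta_{p\text{-}dR}$ (which is a linear quotient stack of finite type, by the same argument as Remark \ref{rmk:linear}, the forgetful functor to $\Bun_r(\bar{\Xc}_{D,n})$ being representable). By Lemma \ref{lemma:Stone-space2} both $\Mc^\eta_{dR}(W)^{\iso}$ and $\Mc^\eta_{p\text{-}dR}(W)^{\iso}$ are then compact Hausdorff Stone spaces. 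It will therefore be enough to establish the lemma on the level of these substacks.

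For continuity, I would observe that $(E,\nabla) \mapsto (E, p\nabla)$ is algebraic: at the atlas level, the space of connections on a fixed $E$ is an affine $W$-scheme, and rescaling the $1$-form part by $p$ is a morphism of $W$-schemes. Thus $[p]$ is induced by a morphism of algebraic $W$-stacks, and Corollary \ref{cor:continuous} yields continuity on $W$-points. For injectivity on isomorphism classes, note that a morphism $\phi\colon (E_1, p\nabla_1) \to (E_2, p\nabla_2)$ in $\Mc_{p\text{-}dR}$ satisfies $p(\phi\otimes\id)\nabla_1 = p\nabla_2\phi$, which by $p$-torsion freeness of $W$ is equivalent to the morphism condition $(\phi\otimes\id)\nabla_1 = \nabla_2\phi$ in $\Mc_{dR}$; so isomorphism classes correspond bijectively.

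The main step is to identify the image of $[p]$ and see that it is open. I claim that $(E, \nabla') \in \Mc^\eta_{p\text{-}dR}(W)^{\iso}$ lies in the image if and only if the mod-$p$ reduction $\nabla'|_k$, which is a Higgs field on $E_k$, is zero. One direction is immediate since $(p\nabla)|_k = 0$. Conversely, if $\nabla'|_k = 0$ then $\nabla' = pM$ for some $\mathcal{O}$-linear $M\colon E \to E \otimes \Omega^1(\log \widehat{\Dc})$; the $p$-Leibniz rule for $\nabla'$ combined with $p$-torsion freeness then implies that $M$ satisfies the ordinary Leibniz rule, is flat, and has nilpotent residues (each property transferring from $pM$ to $M$ by divisibility). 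Now the finite set $\Mc^\eta_{p\text{-}dR}(W_1)^{\iso}$ is discrete, so the subset of isomorphism classes with vanishing Higgs field is clopen; its preimage under the continuous projection $\pi_1\colon \Mc^\eta_{p\text{-}dR}(W)^{\iso}\to \Mc^\eta_{p\text{-}dR}(W_1)^{\iso}$ is then open in the $p$-adic topology, and this preimage is exactly the image of $[p]$.

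Combining these, $[p]$ restricts to a continuous injection between compact Hausdorff spaces, so is a homeomorphism onto its image, and the image is open in the target; hence $[p]$ is open. Passing to the exhaustion by $\eta$ yields the statement globally. The main obstacle I anticipate is the identification in Step 3, where one must carefully exploit the fact that over the unramified base $W$ the condition of $p$-divisibility is detected by the special fibre; everywhere it is essential that $W$ be $p$-torsion free, since otherwise neither the injectivity argument nor the recovery of $M$ from $\nabla'$ would be valid.
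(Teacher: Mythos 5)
Your argument is correct and follows essentially the same route as the paper: continuity from functoriality of the $p$-adic topology, identification of the image of $[p]$ with the locus where the mod-$p$ reduction of the $p$-connection (i.e., the Higgs field) vanishes, and openness of that locus from the pro-finite nature of the topology, since it is the preimage of a subset at the finite level $W_1$. The only differences are cosmetic: the paper concludes openness by exhibiting the continuous inverse $[\tfrac{1}{p}]$ on this open image rather than via your injectivity-plus-compactness argument on the finite-type substacks, and the map $M=\tfrac{1}{p}\nabla'$ you construct should be described as $W$-linear and satisfying the ordinary Leibniz rule (hence a flat log-connection with nilpotent residues), not as $\mathcal{O}$-linear.
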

\begin{proof}
Since the map $[p]$ (sending a connection $\nabla$ to $p\nabla$) is a morphism of algebraic stacks, it induces a $p$-adically continuous map by the  functoriality of the $p$-adic topology.


By definition, a $p$-connection $(E,\nabla')$ lies in the image if and only if its mod $p$ reduction $(E,\nabla')|_k$ is a Higgs bundle with vanishing Higgs field (i.e., $\nabla'|_k=0$). Therefore, the image of $[p]$ agrees with the pre-image in the top left corner of the diagram below:
\[
\xymatrix{
(\pi_n\circ \beta)^{-1}(\mathsf{Bun}(k)^{\iso})\ar[r]\ar[d]& \Bun(k)^{\iso}\ar[d]\\
\Mc_{p-dR}(W)^{\iso}\ar[r]&\Mc_{p-dR}(k)^{\iso}
}
\]
Here, we denote by $\Bun$ the stack of rank $r$ vector bundles on $X$. By virtue of the pro-finite nature of the $p$-adic topology (see for instance Lemma \ref{lemma:basis}), this pre-image is open. On this open subset one has a continuous inverse $[\frac{1}{p}]$, which sends a $p$-divisible $p$-connection $p\nabla$ to $\nabla$.
\end{proof}

\subsection{Conclusion}

According to Theorem \ref{thm:open}, the self-map $F^*$ of $\Mc_{dR}(W)^{\iso}$ is open. Thus, it sends an isolated $W$-point to an isolated point $W$-point.

\begin{corollary}\label{cor:F-isolated}
Let $[(E,\nabla)] \in \Mc_{dR}(W)^{\iso}$ be an isolated  
point. Then, $F^*[(E,\nabla)]\in\Mc_{dR}(W)^{\iso}$ is also an isolated $W$-point.    
\end{corollary}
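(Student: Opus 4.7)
The plan is to deduce this immediately from Theorem~\ref{thm:open}. Recall that a point $[(E,\nabla)] \in \Mc_{dR}(W)^{\iso}$ is isolated precisely when the singleton $\{[(E,\nabla)]\}$ is open in the $p$-adic topology. By Theorem~\ref{thm:open}, the Frobenius pullback
\[
F^*\colon \Mc_{dR}(W)^{\iso} \to \Mc_{dR}(W)^{\iso}
\]
is an open map. Applying openness to the singleton $\{[(E,\nabla)]\}$, the image
\[
F^*\bigl(\{[(E,\nabla)]\}\bigr) = \{F^*[(E,\nabla)]\}
\]
is open in $\Mc_{dR}(W)^{\iso}$, which is exactly the assertion that $F^*[(E,\nabla)]$ is isolated.

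There is essentially no obstacle beyond invoking Theorem~\ref{thm:open}; the argument is a one-line consequence of openness applied to a singleton set, combined with the tautology that the image of a singleton is a singleton. One may optionally note, for emphasis, that the same conclusion follows from the stronger statement that $F^*$ is an open embedding (in particular injective by Lemma~\ref{lemma:injective}), so isolated points are carried bijectively into the open image $F^*(\Mc_{dR}(W)^{\iso})$, and being isolated there is equivalent to being isolated in $\Mc_{dR}(W)^{\iso}$.
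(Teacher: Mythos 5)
Your proposal is correct and matches the paper's argument exactly: the paper likewise deduces the corollary in one line from Theorem~\ref{thm:open}, since an open map sends the open singleton of an isolated point to an open singleton. Nothing further is needed.
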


To conclude the proof of the $F$-isocrystal property it is necessary to investigate whether a $W$-point which gives rise to an isolated $K$-point is also isolated as a $W$-point.
This is the content of the following two lemmas, the proof of which originates from joint work of the second author with Wyss and Ziegler in the context of $p$-adic integration.

{ In order to apply this lemma, we must rigidify the stack $\Mc_{dR}$ in the sense of \cite[Appendix A]{AOV}, that is, remove the scalar automorphisms $\Gb_m \subset \mathsf{Aut}(E,\nabla)$. 

Note that the topological space $\Mc_{dR}(W)^{\iso}$ is the same whether or not $\Mc_{dR}$ is rigidified, since $W$ has trivial Picard group. However, rigidification affects the representable locus mentioned in the following lemma.}

\begin{lemma}\label{lemma:discrete}
Let $\Xc$ be a stack, which is locally a linear quotient stack of finite type. Then, the map $\Xc(W)^{\iso} \to \Xc(K)^{\iso}$ has discrete fibres over each point $x \in \Xc(K)$ belonging to the representable locus (i.e., the maximal open substack which is an algebraic space). 
\end{lemma}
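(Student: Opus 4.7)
The plan is to reduce to a linear quotient chart and then exploit openness of $G(W)$ in $G(K)$. Given a lift $\tilde{x}_W \in \Xc(W)^{\iso}$ of $x_K$, I will shrink $\Xc$ to an open substack containing $\tilde{x}_W$ of the form $[Y/G]$, using that $\Xc$ is locally a linear quotient stack; here $Y$ is an algebraic $W$-space of finite type and $G$ is a smooth affine group scheme. By Proposition~\ref{prop:quotient-stack}, this identifies $\Xc(W)^{\iso}$ with $Y(W)/G(W)$, and $\tilde{x}_W$ is represented by some $y_W \in Y(W)$ with generic fibre $y_K := (y_W)_K$.

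Next I will describe the fibre explicitly. The representability hypothesis forces $\mathrm{Stab}_{G_K}(y_K) = 1$, so the $K$-orbit map $\sigma_K \colon G_K \to Y_K$, $g \mapsto g \cdot y_K$, is a monomorphism; by the general theory of affine algebraic group actions it is in fact a locally closed immersion, and hence induces a homeomorphism of $G(K)$ onto the orbit $G(K) \cdot y_K \subset Y(K)$ in the $p$-adic topology. Translating this through the bijection $g \mapsto g \cdot y_W$, the preimage of the fibre over $x_K$ inside $Y(W)$ is parametrised by
$$S = \{g \in G(K) \colon g \cdot y_W \in Y(W)\},$$
and the fibre itself is homeomorphic to $S/G(W)$.

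To obtain discreteness I will exploit that $G(W) \subset G(K)$ is an open subgroup. Since $G$ is affine of finite type, an embedding $G \hookrightarrow \GL_{N,W}$ realises $G(W)$ as the intersection of $G(K)$ with a clopen subset of $K^{N^2}$, using only that $W \subset K$ is clopen. Consequently every left coset $g G(W)$ is open in $G(K)$, so distinct cosets inside $S$ form pairwise disjoint open subsets in the subspace topology. This exhibits $S/G(W)$ as a discrete space, and therefore the fibre itself is discrete.

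The main obstacle will be verifying that the subspace topology on the fibre coming from $Y(W)/G(W)$ actually matches the quotient topology on $S/G(W)$ under the bijection above. This is precisely where the locally closed immersion property of $\sigma_K$ enters: it ensures that the continuous bijection from $S$ onto $\{y \in Y(W) \colon y_K \in G(K) \cdot y_K\}$ is a homeomorphism, after which passing to the $G(W)$-quotient yields the required identification of the two topologies on the fibre.
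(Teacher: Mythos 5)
Your proposal is correct and takes essentially the same route as the paper: present the stack near the given $W$-point as a linear quotient $[Y/G]$ (with $G$ replaced by $\GL_N$ so that Proposition~\ref{prop:quotient-stack} applies), use representability of $x$ to see that the stabiliser of $y_K$ is trivial, so the orbit is a locally closed copy of $G_K$, and deduce discreteness of the fibre from the fact that $G(W)$ is a compact open subgroup of $G(K)$. The paper organises the same mechanism slightly differently — it passes through the intermediate quotient $U(K)/G(W)$, whose fibre over $x$ is identified with the discrete space $G(K)/G(W)$, and handles the local hypothesis by an ascending exhaustion of $\Xc$ — whereas you parametrise the fibre directly by $\{g \in G(K) : g\cdot y_K \text{ extends to } Y(W)\}/G(W)$, but this is an organisational variant rather than a different argument.
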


\begin{proof}
 By assumption, we can express $\Xc$ as an ascending union $\bigcup_{i\in I} \Xc_i$ where each $\Xc_i$ is a linear quotient stack of finite type. Let $\Xc_i = [U_i/G_i]$ be a presentation as a quotient stack, where we add the additional assumption of $G_i$ being special, i.e., all $G_i$-torsors are Zariski-locally trivial. This property holds for $G_i$ being $GL_r$. It is needed since it guarantees that $H^1(K,G_i)=0$ and thus allows us to define $\Xc_i(K)^{\iso} = U_i(K)/G(K)$.
 
 The claim now follows from the commutative diagram:
 \[
 \xymatrix{
 U_i(W)/G_i(W) \ar[r]^-{\subset}_-{\text{open}} \ar[d]_-{=}^-{\text{def.}} & U_i(K)/G_i(W) \ar[d]^{\alpha_i} \\
  \Xc_i(W)^{\iso} \ar[r] \ar@{..>}[ur] & \Xc_i(K)^{\iso}. 
 }
 \]
 Assume that $x \in \Xc$ is a point in the representable locus, i.e., that $x$ has a Zariski-open neighbourhood representable by an algebraic space. Then, the corresponding $G$-orbit in $U$ is free, i.e., isomorphic to $G_{i,K}=G_i \times_W K$. This shows that the set $\alpha_i^{-1}(x)$ is (non-canonically) homeomorphic to $G_i(K)/G_i(W)$. Since $G_i(W) \subset G_i(K)$ is compact-open, the quotient topology is discrete.

 Thus the intersection of the image of $\Xc_i(W)^{\iso}$ with $\alpha_i^{-1}(x)$ is simultaneously discrete and compact. Thus, $\alpha_i^{-1}(x)$ is finite.

Now, we return to the stack $\Xc$. We infer from the finiteness of the fibre above that the image of $\Xc(W)$ in $\bigcup_{i \in I} \alpha_i^{-1}(x)$ is a discrete space as asserted.
\end{proof}

\begin{corollary}\label{cor:isolated} 
 Suppose that $\Xc$ satisfies the assumptions of Lemma \ref{lemma:discrete}. Assume furthermore that $x$ is an isolated point in $\Xc(K)^{\iso}$ {belonging to the representable locus of $\Xc$}. Then, if $y \in \Xc(W)^{\iso}$ is mapped to the isolated $K$-point $x$, $y$ is an isolated point in $\Xc(W)^{\iso}$. { Vice versa, if $y$ is isolated in $\Xc(W)^{\iso}$ then $x$ is isolated in $\Xc(K)^{\iso}$.}
\end{corollary}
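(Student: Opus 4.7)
The plan is to prove the two directions separately. The first is an immediate consequence of the discreteness of fibres provided by Lemma \ref{lemma:discrete} combined with continuity of $\pi\colon \Xc(W)^{\iso} \to \Xc(K)^{\iso}$; the second reduces, after localising to the representable locus, to the observation that $W$ sits openly inside $K$ in the $p$-adic topology.

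For the direction ``$x$ isolated in $\Xc(K)^{\iso}$ implies $y$ isolated in $\Xc(W)^{\iso}$'', I would first invoke continuity of $\pi$, which follows from the functoriality of the strong topology applied to the inclusion $W \hookrightarrow K$. Since $\{x\}$ is open in $\Xc(K)^{\iso}$, the fibre $\pi^{-1}(x)$ is open in $\Xc(W)^{\iso}$. By Lemma \ref{lemma:discrete} this fibre is a discrete subspace, so $\{y\}$ is relatively open in $\pi^{-1}(x)$; intersecting a witnessing open set in $\Xc(W)^{\iso}$ with the open fibre $\pi^{-1}(x)$ then shows that $\{y\}$ is open in $\Xc(W)^{\iso}$, i.e., $y$ is isolated.

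For the converse, the hypothesis that $x$ lies in the representable locus allows me to localise so that $\Xc$ becomes an algebraic space near $x$, and hence, after passing to an \'etale cover by an affine scheme, to work at the scheme level. The essential point is then the following: for an algebraic $W$-space $U$ of finite type, the inclusion $U(W) \subset U(K)$ is open in the strong topology. Indeed, Zariski-locally one has a closed embedding $U \hookrightarrow \Ab^N_W$, giving $U(W) = U(K) \cap W^N$ inside $K^N$, and $W^N$ is open in $K^N$ because $W$ is itself an open subring of $K$ in the $p$-adic topology. Consequently, $\{y\}$ (being open in $U(W)$) is also open in $U(K)$ as the intersection of two open subsets; descending to the quotient $\Xc(K)^{\iso}$ then yields that $x$ is isolated.

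The main technical subtlety I anticipate is carefully managing the reduction from the stacky picture to the scheme-theoretic one, and verifying that the quotient topologies on $\Xc(W)^{\iso}$ and $\Xc(K)^{\iso}$ behave compatibly with this reduction (essentially an analogue of Proposition \ref{prop:quotient-stack} over $K$, where Hilbert 90 replaces Lang's lemma). Once this setup is in place, the openness of $W$ inside $K$ does all the real work for the converse direction.
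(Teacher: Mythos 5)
Your first direction is essentially the paper's argument (the paper phrases it as a contradiction, you argue directly): continuity of $\Xc(W)^{\iso}\to\Xc(K)^{\iso}$ makes the fibre over the isolated point $x$ an open neighbourhood of $y$, and discreteness of that fibre from Lemma \ref{lemma:discrete} makes $y$ isolated. For the converse the paper takes a much shorter route: it simply cites the openness of the map $\Xc(W)^{\iso}\to\Xc(K)^{\iso}$ from \cite[Proposition 2.9(e)]{Cesnavicius}, the underlying input being \'etale-openness of $W$ and $K$; openness of $\{y\}$ then immediately gives openness of its image $\{x\}$. You instead reprove the needed special case by hand, and the engine of your argument --- $W^N$ open in $K^N$, hence $U(W)=U(K)\cap W^N$ open in $U(K)$ for affine $U\hookrightarrow\Ab^N_W$ --- is indeed the real content behind the cited result, so the route is legitimate and more self-contained. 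Two points you gloss over deserve care. First, isolatedness of $y$ in $\Xc(W)^{\iso}$ does not literally say a point of an atlas is open: on a linear atlas $U\to\Xc$ it says the $G(W)$-orbit of a lift $\tilde y$ is open in $U(W)$, so before descending you must pass to the full $G(K)$-orbit (a union of $G(K)$-translates of an open set, hence open), or first restrict to the representable locus, where $\Xc(W)^{\iso}=\Xc(W)$ and the issue disappears. Second, the descent step requires knowing that the strong topology on $\Xc(K)^{\iso}$ agrees with the quotient topology from the atlas (the $K$-analogue of Proposition \ref{prop:quotient-stack}, via triviality of $\GL_r$-torsors over $K$ and smooth-openness of $K$-points), or, in your \'etale-cover variant, \'etale-openness of $V(K)\to\Xc(K)$ together with the existence of an \'etale neighbourhood through which the $W$-point $y$ actually factors; these are exactly the facts the paper outsources to \cite{Cesnavicius}, and you correctly flag them as the remaining work. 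In short: the paper buys brevity by citation, your version buys transparency at the cost of re-establishing (a special case of) that openness statement.
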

\begin{proof}
 Assume by contradiction that $y$ is not isolated in $\Xc(W)^{\iso}$. Then, there exists a non-discrete neighbourhood $V$ of $y$ which must be entirely mapped to $x$ by the natural continuous map $\Xc(W)^{\iso} \to \Xc(K)^{\iso}$. This contradicts Lemma \ref{lemma:discrete}.

{ For the other direction, we use that the map $\Xc(W)^{\iso} \to \Xc(K)^{\iso}$ is open. This follows from \cite[Proposition 2.9(e)]{Cesnavicius}. Note that the required property of \emph{\'etale-openness} of $W$ and $K$ is shown in (2) and (3) in \cite[Subsection~2.8]{Cesnavicius}.}
\end{proof}

{ The representability assumption on $x$ could be removed, as was pointed out to us by Will Sawin. It is included above for the sake of simplifying the argument.}

\section{Rigidity}

\subsection{Finiteness}
A crucial property of rigid flat connections is that they are of finite number, once the numerical invariants ($r$ and $n$) are fixed. 

\begin{lemma}
There is at most a finite number of isolated points in $\Mc_{dR}(K)$.    
\end{lemma}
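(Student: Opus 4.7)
\medskip

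The plan is to exploit the fact, recorded at the start of Section~\ref{sec:topproof}, that the generic fibre $\Mc_{dR,K} := \Mc_{dR} \times_W \Spec K$ is an algebraic stack of finite type over $K$, and hence Noetherian. First I would note that by Remark~\ref{rmk:linear} together with quasi-compactness of $\Mc_{dR,K}$ we can choose a smooth surjection $\pi\colon U \to \Mc_{dR,K}$ from a scheme $U$ of finite type over $K$; in particular $U$ has only finitely many irreducible components.

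Next I would translate isolation into a scheme-theoretic statement: a point $[x] \in \Mc_{dR}(K)^{\iso}$ is isolated in the $p$-adic topology exactly when the associated de Rham local system is rigid, i.e.\ admits no nontrivial infinitesimal $K$-deformations. Via the standard dictionary between the tangent space of the moduli at a point and $\Ext^1$-groups of the corresponding flat log-connection (applied after rigidification, as before Lemma~\ref{lemma:discrete}, to dispose of the scalar automorphisms), this amounts to the completed local ring of $\Mc_{dR,K}$ at the image of $[x]$ being Artinian. Pulled back to the atlas $U$, this forces the corresponding $\Aut(x)$-orbit to coincide with a connected component of $U$, necessarily of dimension equal to $\dim \Aut(x)$.

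Finally, I would invoke Noetherianity of $U$ to conclude that there are only finitely many such connected components, hence only finitely many isolated points in $\Mc_{dR}(K)^{\iso}$.

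The main obstacle is to bridge the $p$-adic notion of ``isolated'' deployed throughout the paper with the scheme-theoretic notion of a zero-dimensional connected component used in the argument above. The nontrivial direction uses the $p$-adic implicit function theorem: on any positive-dimensional smooth stratum of the coarse moduli, a $K$-point admits a $p$-adic open neighbourhood homeomorphic to a ball in $K^d$, which contains infinitely many further $K$-points and so cannot be $p$-adically isolated; therefore every $p$-adically isolated point must lie on a zero-dimensional component, as required.
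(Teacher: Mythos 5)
Your strategy (finite-typeness of $\Mc_{dR,K}$ plus Noetherianity plus the $p$-adic implicit function theorem) is a reasonable purely $K$-rational alternative to the paper's route, but as written the key bridge is wrong. The asserted equivalence ``$[x]$ is $p$-adically isolated exactly when it admits no nontrivial infinitesimal $K$-deformations, i.e.\ the completed local ring is Artinian'', and the ensuing conclusion that every $p$-adically isolated point lies on a zero-dimensional component, fail over the non-algebraically closed field $K$ -- and it is precisely the direction you need that fails. A $K$-point of a positive-dimensional finite-type $K$-scheme can be $p$-adically isolated simply because the nearby points are not $K$-rational: for the plane curve $x^2-ay^2=0$ with $a\in W^{\times}$ a non-square, the origin is the unique (hence isolated) $K$-point, yet it lies on a one-dimensional component, has a two-dimensional tangent space, and its completed local ring is not Artinian; a node with branches conjugate over a quadratic extension of $K$ gives the same phenomenon on a geometrically irreducible curve with infinitely many $K$-points elsewhere. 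So ``$p$-adically isolated $\Rightarrow$ zero-dimensional component'' is false, and your smooth-stratification argument at best shows that the point lies on a zero-dimensional \emph{stratum} (which would still give finiteness), not on a zero-dimensional component.

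Even the stratum version has a second gap: you run the implicit function theorem on the coarse moduli (whose existence for this Artin stack you do not address), and the ball of nearby $K$-points it produces consists of $K$-points of the coarse space; these need not lift to $K$-points of the stack, i.e.\ to flat connections actually defined over $K$, so non-isolation in $\Mc_{dR}(K)^{\iso}$ does not follow. Over $W$ the paper resolves exactly this lifting problem via Lang's lemma and Hensel (Proposition \ref{prop:quotient-stack}), but over $K$ there is a genuine cohomological obstruction. (A smaller slip: the fibre of the atlas over $x$ is an orbit of dimension $\dim G-\dim\Aut(x)$, not $\dim\Aut(x)$.) The paper avoids all of these $K$-rationality issues by choosing an embedding $W\subset\Cb$ and invoking the Riemann--Hilbert correspondence, reducing to isolated points of the Betti moduli stack, which has an affine atlas; it also sketches an algebraic alternative via vanishing of orbifold Chern classes and Langer's boundedness. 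Your approach could plausibly be repaired (e.g.\ by Noetherian induction on the singular locus applied to an atlas, plus an argument controlling the passage from the atlas to isomorphism classes), but that repair is the actual content of the lemma and is missing from the proposal.
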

\begin{proof}
A point $x=[(E,\nabla)]$, which is $p$-adically isolated is also isolated with respect to the Zariski topology. We choose an embedding $W \subset \mathbb{C}$ and consider the base change $(\Mc_{dR})_{\mathbb{C}}$. The Zariski-isolated point $x \in \Mc_{dR}(\Cb)$ is also isolated with respect to the strong topology. It suffices therefore to show that there is at most a finite number of isolated points in the complex-analytic space. According to the Riemann-Hilbert correspondence, $x$ corresponds to an isolated point in the Betti moduli stack $\Mc_B$, parametrising representations of the fundamental group of $X_{\Cb}$ with quasi-unipotent monodromies along $D$, with eigenvalues contained in $e^{\frac{2\pi i}{n}\Zb}$. The Betti moduli stack admits an affine atlas, and thus contains at most finitely many isolated points.
\end{proof}

We remark that it would be possible to avoid the Riemann-Hilbert correspondence in the proof above, and to replace it by a completely algebraic argument. We briefly sketch the argument. The first step is to show that a flat log-connection $(E,\nabla)$ on $(\Xc_{D,n})_K$ with nilpotent residues along the boundary divisor has vanishing orbifold Chern classes. This can be shown as in~\cite[Proposition~B.1]{EV86}. We remark that \emph{loc. cit.} verifies this for log-connections on a smooth proper variety. A similar argument applies to the case of root stacks, \emph{mutatis mutandis}.

One can then apply Langer's boundedness result \cite{Lan14} (established purely algebraically) to infer finiteness of the number of isolated points of $\Mc_{dR}(K)$.

\subsection{$F$-isocrystals}
 We prove the first part of Main Theorem~\ref{thm:main}.
As seen in Corollary \ref{cor:isolated}, a $W$-point of the {stable locus} of $\Mc_{dR}$ is isolated if and only if the induced $K$-point is isolated {(since stability implies that all automorphisms are scalar)}. Furthermore, we showed that $F^*$ preserves isolated $W$-points in Corollary \ref{cor:F-isolated}. By injectivity of $F^*$ (see Lemma \ref{lemma:injective}), we obtain a permutation of the finite set of isolated points {within the representable locus}. Therefore, each rigid flat connection gives rise to a finite $F^*$-orbit, that is, an $F$-isocrystal.

If $n=1$, then we are given a Frobenius log-crystal on a proper variety $\bar{X}_k$, and by \cite[Theorem 6.4.5]{Kedlaya}, it is overconvergent. In the general case, we are given a Frobenius log-crystal on the root stack $\Xc_{D,n}$ and we cannot directly apply \emph{loc. cit.}. To resolve this problem we may choose appropriate \'etale coordinates on $\bar{X}$, and assume without loss of generality that we are given a Frobenius log-isocrystal $(E,\nabla)$ on the open subset $X$ of the order $n$ root stack $\Yc$ of $\Ab_k^d$ associated to the divisor $\{z_1\cdots z_m=0\}$. We consider the ramified covering $g\colon \Ab_k^d \to \Ab_k^d$ given by $z_i \mapsto z_i^n$ for $0\leq i \leq m$. It lifts to an \'etale map $\tilde{g}$ to the root stack $\Ab_k^d\to \Yc$.

According to \cite[Theorem 6.4.5]{Kedlaya}, the pullback $\tilde{g}^*(E,\nabla)$ restricts therefore to an overconvergent $F$-isocrystal on the open subset $\Ab^d_k \setminus \{z_1\cdots z_m=0\}$. Using that the map $g|_{\Ab^d_k \setminus \{z_1\cdots z_m=0\}}$ is finite and surjective, we obtain that it has the effective descent property for overconvergent isocrystals (\cite[Theorem 5.1]{Laz22}). This shows that $(E,\nabla)|_X$ is overconvergent, since we just showed that $g^*(E,\nabla)|_X$ is overconvergent.

\subsection{Higgs-de Rham  flows for rigid local systems}

We now have the tools to address the question when a rigid flat connection on $\bar{\Xc}_{D,n}/W$ gives rise to an $f$-periodic HdR-flow, that is, we prove the second part of Main Theorem~\ref{thm:main}.

\begin{corollary}
Let $r \in \Nb$ be a fixed rank and assume that every rigid flat connection $(E,\nabla)$ in  $\Mc_{dR}(K)$ has a stable $W$-model. Assume furthermore that every such $W$-model is endowed with a Griffiths-transverse filtration defined over $W$  which is locally split such that the Higgs bundle  $(gr^{\rm Fil} E, gr^{\Fil} \nabla)$ is stable.  Then, every rigid flat connection is periodic.    
\end{corollary}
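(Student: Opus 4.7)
The plan is to assemble three previously established results: the $F$-isocrystal part of Main Theorem~\ref{thm:main}, the HdR--Frobenius comparison of Proposition~\ref{prop:Phi-F-comparison} (packaged as Corollary~\ref{cor:W-model}), and the stability output of Proposition~\ref{prop:PhiC}. Throughout, the running hypothesis is that every rigid $K$-point of $\Mc_{dR}$ admits a stable $W$-model carrying a locally split Griffiths-transverse filtration whose associated graded Higgs bundle is stable.

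Concretely, I would start with a rigid flat connection $(E,\nabla) \in \Mc_{dR}(W)^{\iso}$ and, by the first part of Main Theorem~\ref{thm:main} already proved, extract an integer $f>0$ with $(F^*)^f(E,\nabla)_K \simeq (E,\nabla)_K$. Since rigid $K$-points form a finite set and $F^*$ permutes isolated $W$-points by Corollary~\ref{cor:F-isolated} (combined with Corollary~\ref{cor:isolated} to transfer between $K$- and $W$-isolatedness under the stability assumption), each iterate $(F^*)^i(E,\nabla)_K$ for $0 \leq i < f$ is itself rigid. Under the standing hypothesis it therefore admits a stable $W$-model $(E_i,\nabla_i)$ equipped with a Griffiths-transverse filtration $\Fil_i$ whose graded Higgs bundle $(\mathcal{H}_i,\theta_i)$ is stable. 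Feeding this data into Corollary~\ref{cor:W-model} produces an $f$-periodic HdR-flow on $W$-models, and uniqueness of the stable $W$-model identifies its starting point with the given $(E,\nabla)$.

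I expect the main obstacle to be hidden inside the application of Corollary~\ref{cor:W-model}: one must check that $\Phi(E_i,\nabla_i,\Fil_i)$ coincides with $(E_{i+1},\nabla_{i+1})$ integrally and not merely after inverting $p$. Generically this is automatic from Proposition~\ref{prop:Phi-F-comparison}, which yields $\Phi(E_i,\nabla_i,\Fil_i)_K \simeq F^*(E_i,\nabla_i)_K \simeq (E_{i+1},\nabla_{i+1})_K$. To upgrade to a $W$-isomorphism one invokes separatedness of the stable moduli, but this requires both $W$-families to be stable. The right-hand side is stable by hypothesis; for the left-hand side, Proposition~\ref{prop:PhiC} identifies $\Phi(E_i,\nabla_i,\Fil_i)_k$ with $C^{-1}(\mathcal{H}_i,\theta_i)$ up to a sign of $\theta$, and thereby transports the stability of $(\mathcal{H}_i,\theta_i)$ to $\Phi(E_i,\nabla_i,\Fil_i)_k$. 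This is precisely the point where the hypothesis $p>r$ (ensuring the width satisfies $w<p$) and the stability of the graded Higgs bundle are consumed; once both sides are recognised as stable $W$-models with isomorphic generic fibre, separatedness closes the argument.
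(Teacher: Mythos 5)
Your proposal is correct and takes essentially the same route as the paper: the $F$-isocrystal property (i.e.\ openness of $F^*$ permuting the finitely many isolated points, Theorem \ref{thm:open} and Corollary \ref{cor:F-isolated}) gives a finite Frobenius orbit of rigid points, and Corollary \ref{cor:W-model} then converts the hypothesised stable filtered $W$-models into an $f$-periodic Higgs--de Rham flow. Your extra verification that $\Phi(E_i,\nabla_i,\Fil_i)$ is itself stable via Proposition \ref{prop:PhiC} (using $w\le r<p$), so that separatedness of the stable moduli applies, simply makes explicit the role of the graded-stability hypothesis that the paper's two-line proof leaves implicit.
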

\begin{proof}
Frobenius pullback preserves isolated points of the moduli space by Theorem \ref{thm:open}. The assertion now follows from Corollary \ref{cor:W-model} . 
\end{proof}

\end{document}